\newtheorem{thm}{Theorem}[section]
\newtheorem{lem}[thm]{Lemma}
\theoremstyle{definition}
\newtheorem{defn}[thm]{Definition}
\theoremstyle{remark}
\numberwithin{equation}{section}
\newcommand{\real}{\mathbb{R}}
\newcommand{\complex}{\mathbb{C}}
\newcommand{\sphere}{\mathbb{C}^*}
\newcommand{\disk}{\mathbb{D}}
\newcommand{\Hy}{\mathcal{H}}
\newcommand{\M}{\mathcal{M}}
\newcommand{\hD}{\mathbf{D}}
\newcommand{\hX}{\mathbf{\widehat{X}}}
\newcommand{\hA}{\mathbf{A}}
\newcommand{\haC}{\mathbf{C}}
\newcommand{\ha}{\mathbf{a}}
\newcommand{\hb}{\mathbf{b}}
\newcommand{\hB}{\mathbf{B}}
\newcommand{\hx}{\mathbf{x}}
\newcommand{\hy}{\mathbf{y}}
\newcommand{\hz}{\mathbf{z}}
\newcommand{\hh}{\mathbf{h}}
\newcommand{\hd}{\mathbf{d}}
\newcommand{\hc}{\mathbf{c}}
\newcommand{\hw}{\mathbf{w}}
\newcommand{\hJ}{\mathbf{J}}
\newcommand{\vp}{\varphi}
\newcommand{\ta}{\theta}
\newcommand{\al}{\alpha}
\newcommand{\ba}{\beta}
\newcommand{\ga}{\gamma}
\newcommand{\ol}{\overline}
\newcommand{\0}{\emptyset}
\newcommand{\e}{\varepsilon}
\newcommand{\da}{\delta}
\newcommand{\diam}{\text{diam}}
\newcommand{\bd}{\text{Bd}}
\newcommand{\lam}{\mathcal{L}}
\begin{document}

\title{Extending Isotopies of Planar Continua}
\author[Oversteegen]{Lex~G.~Oversteegen}
\address{University of Alabama at Birmingham, Department of
Mathematics, Birmingham, AL 35294, USA} \email[Lex Oversteegen]
{overstee@math.uab.edu}

\author[Tymchatyn]{E.~D.~Tymchatyn }
\address{University of Saskatchewan, Department of Mathematics and Statistics,
106 Wiggins road, Saskatoon, Canada, S7N 5E6}
\email[Ed Tymchatyn]{tymchat@math.usask.ca}
\thanks{The first named author was supported in part by
NSF-DMS-0405774, and the second named author by NSERC OGP0005616}

\subjclass{Primary 57N37; Secondary 57N05}

\keywords{Isotopy, extension, planar continuum}

\date{\today}




\begin{abstract}In this paper we solve the following problem in the affirmative:
Let $Z$ be a continuum in the plane $\complex$ and suppose that $h:Z\times [0,1]\to\complex$ is an isotopy
starting at the identity. Can $h$ be extended to an isotopy
of the plane? We will provide a  new characterization of an accessible point
in a planar continuum $Z$ and use it to show that an accessible point is preserved during the
isotopy. We show next that the isotopy can be extended over hyperbolic crosscuts.
The proof makes use of the notion of a metric external ray, which mimics the notion
of a conformal external ray,  but is easier to control during an isotopy.
\end{abstract}

\maketitle

\section{Introduction}
Denote the complex plane by $\complex$, the origin by $O$, the open unit disk by $\disk$ and the complex sphere
 by $\sphere=\complex\cup\{\infty\}$.
Suppose that $h:Z\times [0,1]\to \complex$ is an isotopy of a  continuum $Z\subset\complex$
such that if we denote  $h^t=h|_{Z\times\{t\}}$, then
$h^0=id_Z$. We consider the old problem whether the isotopy
$h$ can be extended to an isotopy of the plane.\footnote{We are indebted to Professor
R.~D.~Edwards who communicated this problem to us.}
Under the much more restrictive assumption of a holomorphic motion (where  the parameter
$t$ belongs to the open unit disk and $h$ is holomorphic in $t$) the $\lambda$-Lemma shows that
 $h^t$ can be extended to a quasi-conformal homeomorphism of the entire plane (in this case
  the assumption that $h$ is continuous can
  be relaxed, while the continuity of the extension still follows,
 see \cite{manesadsull83,lyub83a,sullthur86} and \cite{slod91}
for further details). Although the $\lambda$-Lemma also holds for arbitrary
(in particular not connected) sets $Z$, easy examples show that an isotopy of a convergent sequence cannot necessarily
be extended over the plane (see \cite[p.~991]{fabe05}). It follows from Rado's Theorem \cite[Theorem 4.2]{wen91}
 that the isotopy $h^t$ can  be extended to an
isotopy of $\complex$ if $Z$ is a simple closed curve (see \cite{baer27,baer28} for related results and
\cite{epst66} for a generalization). Analytic techniques, in particular, boundary values of conformal
maps have been  powerful tools for studying
plane continua. However, they appear insufficient to answer the general question.

One of the main complications addressed in this paper is that Carath\'eodory kernel convergence
is insufficient to allow us control of the behavior of conformal external rays under an isotopy. To this end we introduce metric
external rays which depend only on  distance and, hence, behave well under an isotopy. The existence of metric
 external rays was alluded to in \cite{bell67} and more fully developed in \cite{ilia70,bell76}.
 For our purpose it will be easier to define them as the equidistant set between two disjoint
 and closed sets in the covering space of $\complex\setminus \{O\}$ by the exponential map. Equidistant sets
 and metric external rays were
 studied in detail by G. Brouwer in \cite{brou05}.
 We will use  metric external rays to show that the isotopy can be extended
 over  conformal external rays. Even though the entire proof could have been
 carried out using  metric external rays, it suffices for this paper to rely for the final extension over $\complex^*$
 on the existing analytic theory.

 We will always denote by   $Z$ a proper subcontinuum in the sphere $\sphere$ (or equivalently in the plane $\complex$),
 by $h:Z\times [0,1]\to\complex$ an isotopy
  such that $h^0=id_Z$ and  by $U$  a component of $\sphere\setminus Z$ (or equivalently $\complex\setminus Z$).
Given a fixed component $U$ of $\sphere\setminus Z$ we may assume, without loss of generality,
  that $U$  contains the point at infinity
(or is the unbounded component of $Z$)
and  $\infty\in\sphere\setminus h^t(Z)$ for all $t\in[0,1]$.
Denote by $U^t$  the  component of $\sphere\setminus h^t(Z)$
containing the point at infinity  (or the unbounded component of $\complex\setminus h^t(Z)$),
then $U^t\cup\{\infty\}$ is  simply connected.
We always denote by  $\vp^t:\disk\to U^t\cup \{\infty\}$  the conformal map such that
$\vp^t(O)=\infty$ and $(\vp^t)'(O)>0$. Then the maps $\vp^t$ are unique and,
by Carath\'eodory kernel convergence, uniformly convergent in $t$ on compact subsets of $\disk$.
By slightly abusing the language we will identify points in the boundary $S^1$ of the disk $\disk$
with their arguments and call them \emph{angles}.

We say  that $x\in Z$ is  \emph{accessible}  from $U$
 if there exists an angle $\ta\in [0,2\pi)$ such that
 the \emph{(conformal) external ray} $R_\ta=\vp(\{r e^{ i \theta} \mid r<1\})$ lands on $x$
(i.e., $\ol{R_\ta}\setminus R_\ta=\{x\}$).  It is well-known that  a point $x\in Z$
 is accessible from $U$ if and only if there exists a continuum $Y\subset\ol{U}$ such that $Y\cap Z=\{x\}$.
 Moreover, in this case $\ol{\vp^{-1}(Y\setminus \{x\})}\cap S^1=\{\ta\}$ is a single point and $R_{\ta}$ lands
 on $x$ in $Z$ \cite{miln00}. It is clearly necessary that the corresponding point
$x^t=h^t(x)$ remains  accessible  in $h^t(Z)$ from $U^t$. However, Carath\'eodory kernel convergence is insufficient
to show this and one of the first steps of the proof is to
show that this is indeed the case. If we assume  in addition that $x$ is not a  cut point of $Z$,
then there exists for each $t$ a unique angle $\theta^t$ such that the external ray $R^t_{\ta^{t}}$
of $Z^t$ lands on
$x^t$.

The next step of the proof is to show that this correspondence of angles is continuous in $t$ and
 there exists an isotopy $\al^t:S^1\to S^1$ of the unit circle such that
 if $R^0_\ta$ lands on $x^0$ in $Z^0$, then
 $R^t_{\al^t(\ta)}$ lands on $x^t$ in $Z^t$ for each $t$.   Extending $\al^t$ to an isotopy
 $f^t:\disk \to \disk$, defined by $f^t(re^{i\ta})=re^{i\al^{t}(\ta)}$ does not, however, provide a
 proper  extension over $\ol{U^0}$
 since simple examples show that in general the isotopy $H:U^0\times [0,1]\to\sphere$ defined by
 $H(w,t)=\vp^t\circ f^t\circ (\vp^0)^{-1}(w)$ does not have a continuous extension
 over $\bd(U^0)$. In the final step of the proof we use the Kulkarni-Pinkall \cite{kulkpink94}
 lamination of $U^0$ in the sphere to define the proper extension.

We denote  by $\exp$ the covering map
 $\exp:\complex\to\complex\setminus \{O\}$ defined by $\exp(z)=e^{z}$.
 Given a set $X\subset \complex$ we denote by $\hX=\exp^{-1}(X\setminus \{O\})$
 and we use bold face letters for subsets of $\hX$.
 However, for points $x\in\complex\setminus \{O\}$
  we denote by $\hx$ a point in the set
 $\exp^{-1}(x)$. We also denote
  by $\pi_j:\complex\to\real$, $j=1,2$,  the projections onto the $x$-axis and $y$-axis, respectively.
  The open ball with center $x$ and radius $r$ is denoted by $B(x,r)$ and its boundary by $S(x,r)$.
  For a set $A\subset\complex$ we denote by $B(A,\e)=\bigcup\{B(a,\e)\mid a\in A\}$.
  By a \emph{ray} $R$ we mean a  subset
  of $\complex$ homeomorphic to the real line $\real$ so that $|\ol{R}\setminus R|\le 1$
  and $\ol{R}$ is not a simple closed curve. If $\ol{R}\setminus R=\0$, then we say that $R$ is a
  \emph{closed ray}.

  We will use the following notation throughout:
   for any set $A\subset Z$ we denote by $A^t$ the set $h^t(A)$. We are initially
   only interested in extending the isotopy over the unbounded component $U$
   of $\complex\setminus Z$.
   Recall that $U^t$ is the component of $\sphere\setminus
   h^t(Z)$ containing $\infty$ and denote by $X^t$ the continuum $\sphere\setminus U^t$. Then $X^t$ is a non-separating
   plane continuum. We may identify any particular point $z\in\bd(U)$
   with the origin $O$, assume that it is fixed under the isotopy and that $X^t\subset B(O,1)$
   for all $t\in[0,1]$. We will denote the Euclidean metric on $\complex$ by $d$ and the spherical metric
   on $\sphere$ by $\rho$.
  Finally, given two points $x,y\in\complex$, we denote by $xy$ the straight line segment joining them.

\section{Preliminaries}\label{Sprelim}
Crucial to our study is the notion of an equidistant set between two disjoint closed sets in $\complex$.
Since the reference \cite{brou05} is not easily accessible and our results require a slightly different setting,
 we will sketch proofs for some of these results in this section.
 We start with the following definition from \cite{brou05}.
Suppose that $A$ and $B$ are two disjoint closed subsets of the plane. For $z\in\complex\setminus[A\cup B]$,
let $r(z)=d(z,A\cup B)$.
Then we say that $A$ and $B$ are \emph{non-interlaced} if for each $z\in \complex\setminus [A\cup B]$,
 $A\cap S(z,r(z))$ and $B\cap S(z,r(z))$
 are contained in two disjoint closed and connected subsets of $S(z,r(z))$ (one may be empty).
  Let $E(A,B)=\{z\in \complex \mid
 d(z,A)=d(z,B)\}$ be the equidistant set between $A$ and $B$.

 Let $A$ and $B$ be two disjoint, closed and non-interlaced sets. By Gaston Brouwer \cite{brou05}[Theorem 3.4.4],
  $E(A,B)$ is a $1$-manifold.
Moreover, if $A$ and $B$ are connected, then $E(A,B)$ is connected and, hence, it is either a closed ray in the plane
or a simple closed curve. In particular if $A$ and $B$ are also both unbounded, then $E(A,B)$ is a closed ray
which separates $\complex$ into two disjoint open and connected sets one containing $A$ and the other containing $B$.
We will slightly generalize this case by replacing the condition that $A$ and $B$ are connected by
the weaker condition that  $A$ lies above  $B$ (see Definition~\ref{dorder} and Theorem~\ref{connectedE}).

Since  $O\in X^t\subset B(O,1)$ for all $t$,
 $\max\{\pi_1(\hX^t)\}<0$.
 It follows from this and the fact that $X$ is a continuum
 that for any component $\haC$ of $\hX$,
$\pi_1(\haC)=(-\infty,m_{\haC}]$ with $m_{\haC}<0$. Moreover, since $X$ is non-separating,  $\hX$ is also
non-separating and, hence, each component $\haC$ of $\hX$ is also non-separating.
To see this note that $\hX$ has a unique complementary domain
 $W$ such that $\pi_1^{-1}([0,\infty))\subset W$. If $V$ is any complementary domain of $\hX$, then $V$ must contain
  a point $\mathbf{v}\in V\setminus \hX$ and $\exp(\mathbf{v})=v\in\complex\setminus X$. Hence there exists a ray
  $R\subset\complex\setminus X$ joining $v$ to infinity. Then the lift $\bf R$ of $R$ with initial point $\mathbf{v}$
  is a ray in $\complex\setminus \hX$ which intersects $W$ and $V=W$ as required. These facts will allow
 us to define what it means for one component of $\hX$ to lie above another component.

\begin{defn}\label{dorder}Let $\haC$ and $\hD$ be two distinct components of $\hX$. We say that $\haC$ \emph{lies above}
 $\hD$
if there is a path $s:[0,1]\to \pi_1^{-1}((-\infty,0])\setminus \haC$  such that the initial point $s(0)$ is in $\hD$,
$s(1)=O$ and
if $R=s([0,1])\cup [0,\infty)\times \{0\}$, then $\haC$ lies in the unique unbounded component of
 $\complex\setminus [\hD\cup R]$ which contains the point $1+2\pi i$.

 Moreover, if $\hX=\hA\cup \hB$, where $\hA$ and $\hB$ are disjoint closed sets,
  such that every component of $\hA$ lies above every component of $\hB$, then
 we say that $\hA$ \emph{lies above} $\hB$.
\end{defn}

\begin{lem} The notion of $\haC$ lying above $\hD$ is well-defined and for any two components $\haC$ and $\hD$ of $\hX$,
one must lie above the other.
\end{lem}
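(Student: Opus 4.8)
The plan is to work in the sphere $\sphere$ and to use that each component of $\hX$, together with $\infty$, is a non-separating subcontinuum. Since $\haC$ is closed, connected, unbounded, and has connected complement, $\haC\cup\{\infty\}$ is a non-separating subcontinuum of $\sphere$, so $\complex\setminus\haC$ is an open topological disk; likewise $\complex\setminus\hD$, and, attaching $[0,\infty)\times\{0\}$ to $\hD$ as a whisker at $\infty$, also $\complex\setminus(\hD\cup[0,\infty)\times\{0\})$. I would first record an auxiliary fact: $\pi_1^{-1}((-\infty,0])\setminus\haC$ is connected. This follows from a two-halfplane argument: $\complex\setminus\haC$ is the union of $\pi_1^{-1}((-\infty,0])\setminus\haC$ and the half-plane $\pi_1^{-1}([m_{\haC}/2,\infty))$, whose overlap is the closed strip $\pi_1^{-1}([m_{\haC}/2,0])$, which is connected and, since $m_{\haC}<0$, disjoint from $\haC$; as the union and the half-plane are connected and the overlap is connected, so is the first piece. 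Consequently a valid connecting path $s$ exists, and, after trimming it from the $O$-end, $s$ may be taken to be an arc meeting $\hD$ only in $s(0)$ and meeting $[0,\infty)\times\{0\}$ only in $O$.

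For such an $s$, the arc $R\cup\{\infty\}=s([0,1])\cup([0,\infty)\times\{0\})\cup\{\infty\}$ runs from $s(0)\in\hD$ to $\infty$, has interior inside the disk $\complex\setminus\hD$, and meets that disk's boundary $\hD\cup\{\infty\}$ only in its two endpoints; that is, it is a crosscut. A crosscut splits a disk into exactly two pieces, so $\complex\setminus[\hD\cup R]$ has exactly two components $P$ and $Q$. A mod-$2$ winding argument identifies them: if there were an arc from $1+2\pi i$ to $1-2\pi i$ inside $\complex\setminus[\hD\cup R]$, closing it up by the segment $\{1\}\times[-2\pi,2\pi]$ (which meets $\hD\cup R$ only at $1$, transversally) would yield a loop about which $O$ has odd winding number while every point of the unbounded set $\hD$ has winding number $0$ -- contradicting that the arc $s$ joins $O$ to $\hD$ without meeting that loop. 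Hence the quarter-planes $\{\pi_1>0,\,\pi_2>0\}$ and $\{\pi_1>0,\,\pi_2<0\}$ lie in different components, so $P$ (the one containing $1+2\pi i$) contains the former and $Q$ the latter; both are unbounded. Since $\haC$ is connected and disjoint from $\hD\cup R$, it lies in exactly one of $P,Q$, and ``$\haC$ lies above $\hD$'' means precisely that it lies in $P$.

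Well-definedness then follows from a $\Z/2$ crossing count. Fix a path $\delta$ from a point of $\haC$ to $1+2\pi i$ that avoids $\hD$ and $[0,\infty)\times\{0\}$ (such $\delta$ exists, as $\complex\setminus(\hD\cup[0,\infty)\times\{0\})$ is a connected disk containing both endpoints). Inside that disk $s([0,1])$ is a crosscut, so $\haC$ lies in $P$ if and only if $\delta$ meets $s([0,1])$ in an even number of transverse points. Given two valid connecting paths $s_0,s_1$, first slide the initial point of $s_0$ to that of $s_1$ along the connected set $\hD$ (this changes no crossing parity, since $\delta$ avoids $\hD$); the resulting closed loop -- $s_0$ followed by $s_1$ traversed backwards -- is bounded and contained in $\pi_1^{-1}((-\infty,0])$, while $\haC$ (connected, unbounded, disjoint from it) and $1+2\pi i$ (in $\pi_1^{-1}((0,\infty))$, disjoint from it) both lie in its unbounded complementary component; hence $\delta$'s two endpoints have equal -- namely zero -- winding number about this loop, and therefore $\delta$ has the same crossing parity with $s_0$ as with $s_1$. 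So the relation does not depend on the choice of $s$.

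Finally, for totality, let $\haC\ne\hD$. If $\haC\subseteq P$ there is nothing to prove, so suppose $\haC\subseteq Q$; I would then rerun the construction with the roles of $\haC$ and $\hD$ interchanged, but now the connecting path $s'$ from $\haC$ to $O$ must be chosen with care: route it through the component $Q$ of $\complex\setminus[\hD\cup R]$ -- which is legitimate because $\haC\subseteq Q$, because $O$ is an interior point of the crosscut $R$ and hence accessible from $Q$, and because the two-halfplane argument keeps $s'$ inside $\pi_1^{-1}((-\infty,0])$. For such an $s'$ one then checks, by a mod-$2$ winding argument as above, that $\hD$ lies on the side of $R'$ containing $1+2\pi i$; that is, $\hD$ lies above $\haC$. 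Carrying out the same argument with $1-2\pi i$ in place of $1+2\pi i$ shows moreover that $\haC$ and $\hD$ cannot each lie above the other, so ``lies above'' is a strict total order. The main obstacle of the whole lemma is precisely this last step: keeping tight enough control of the plane topology of the non-compact sets $\haC,\hD,R,R'$ -- above all, honestly routing $s'$ inside the left half-plane and on the prescribed side of $s$ -- to make the ``which side is which'' bookkeeping rigorous. What makes it tractable is the disk structure of $\complex\setminus\haC$ and $\complex\setminus\hD$ (so that $R,R'$ are genuine crosscuts), the mod-$2$ winding arguments for keeping track of sides, and the hypothesis $m_{\haC},m_{\hD}<0$, which confines every component to the open left half-plane and turns $[0,\infty)\times\{0\}$ into a clean escape to $\infty$.
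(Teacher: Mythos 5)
Your framework (viewing $\complex\setminus\hD$ as a disk, $R\cup\{\infty\}$ as a crosscut, and detecting sides by mod-$2$ crossing parity) is a legitimate alternative to the paper's argument, and the identification of the two sides of $R$ and the parity criterion are sound. But two supporting steps are genuinely flawed as stated. First, the ``two-halfplane argument'' is an invalid inference: from $A\cup B$, $B$, and $A\cap B$ connected one cannot conclude that $A$ is connected (take $B$ a closed half-plane and $A$ the complementary half-plane together with a disjoint blob inside $B$). The conclusion that $\pi_1^{-1}((-\infty,0])\setminus\haC$ is connected is true, but it needs the non-separation of $\haC$ plus, say, the retraction $x+iy\mapsto\min(x,0)+iy$ of $\complex\setminus\haC$ onto it; since you invoke this ``argument'' again later, the error propagates. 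Second, in the well-definedness step you ``slide the initial point of $s_0$ to that of $s_1$ along the connected set $\hD$.'' The components of $\hX$ are connected but in general not path-connected (they are lifts of the boundary of an arbitrary non-separating continuum), so no such slide exists inside $\hD$; this is precisely why the paper connects the two initial points by a piecewise linear arc lying in a chain of small balls around $\hD$, chosen small enough to avoid $\haC$ and the right half-plane. Your parity argument survives with that fix (such an arc can also be kept off the compact path $\delta$, which lies at positive distance from $\hD$), but as written there is a gap.

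The more serious gap is in totality, the step you yourself single out as the main obstacle. You need a path $s'$ from $\haC$ to $O$ that simultaneously stays in $\pi_1^{-1}((-\infty,0])$, avoids $\hD$, and runs inside the component $Q$, and you justify its existence only by the (flawed) two-halfplane argument plus the remark that $O$ is accessible from $Q$. Accessibility of $O$ from $Q$ is fine, but accessibility \emph{through the closed left half-plane while staying in $Q$} is not: near $O$ the arc $s$ may oscillate, and the points of $Q$ close to $O$ might a priori all lie in the right half-plane (below the positive $x$-axis), so the constrained path you need is not shown to exist. Moreover the decisive verification that for such an $s'$ the set $\hD$ lands on the $1+2\pi i$ side of $\haC\cup R'$ is left as ``one then checks.'' Compare the paper, which sidesteps this routing problem entirely: it chooses two arcs $J_{\haC}\subset(\complex\setminus\hD)\cap\pi_1^{-1}((-\infty,0])$ and $J_{\hD}\subset(\complex\setminus\haC)\cap\pi_1^{-1}((-\infty,0])$ from $\haC$ and $\hD$ to $O$, meeting only at $O$, and reads the dichotomy off from which side of $J_{\haC}\cup[0,\infty)\times\{0\}$ the other component lies on. To complete your proof you would either need to establish the constrained access to $O$ from $Q$ or replace this step by an argument of the paper's type.
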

\begin{proof} Let $s_1$ and $s_2$ be two
paths  satisfying the conditions in
definition~\ref{dorder}. Put $R_1=s_1([0,1])\cup [0,\infty)\times \{0\}$,
$R_2=s_2([0,1])\cup [0,\infty)\times \{0\}$ and suppose that $\haC$ lies in the unbounded component
of $\hD\cup R_1$ which contains the point
$1+2\pi i$.
  Suppose first that $R_1$ and $R_2$ have the same initial point
  $s_1(0)=s_2(0)$.
Since $\pi_1^{-1}((-\infty,0])\setminus \haC$ is simply connected, there exists a homotopy
 $j:[0,1]\times[0,1]\to \pi_1^{-1}((-\infty,0])\setminus \haC$,
with endpoints fixed, between $s_1$ and $s_2$. Since $j^t$ misses the connected set $\haC$ for each $t$,
it follows that $\haC$ lies in the  component of $\complex\setminus [R_2\cup \hD]$ containing $1+2\pi i$.

Next suppose that $R_1$ and $R_2$ have initial points $\hz_1$ and $\hz_2$, respectively. Let $\mathcal{U}=
\{B(y,\e) \mid y\in \hD \text{ and } \e=(1/3)\, d(y,[C\cup \pi_1^{-1}([0,\infty))])\}$.
Then $\mathcal{U}$ is an open cover of $\hD$.
Since $\hD$ is connected, there exists a chain $\{B_1,\dots,B_n\}$ of balls in $\mathcal{U}$ such that $\hz_1\in B_1$,
$\hz_2\in B_n$ and $B_j\cap B_{j+1}\not=\0$ for $j=1,\dots,n-1$. Let $J$ be a piecewise linear arc in
$\cup B_j$ from $\hz_1$
 to $\hz_2$. Then there exists a path $s_3$ such that $s_3([0,1]))=J \cup s_2([0,1])$
  is a path with initial
 point $\hz_1$ and terminal point $O$. Put $R_3=s_3([0,1])\cup  [0,\infty)\times \{0\}$.
 Then $\haC$ lies in the unbounded component of $\complex\setminus [\hD\cup R_3]$ which contains the point
 $1+2\pi i$. Hence, $\haC$ lies in the unbounded component of $\complex\setminus [\hD\cup R_2]$ which contains
 the point $1+2\pi i$.

 Suppose that $\haC$ and $\hD$ are any two components of $\hX$. Then $U_{\haC}=\complex\setminus \haC$ and
 $U_\hD=\complex\setminus \hD$ are open and connected sets homeomorphic to $\complex$.
 Hence there exist two arcs $J_\haC\subset
 U_\hD\cap \pi_1^{-1}((-\infty,0])$ and $J_\hD\subset U_\haC\cap \pi_1^{-1}((-\infty,0])$ joining points $\hc\in \haC$
 and $\hd\in \hD$ to $O$, respectively.
 In addition we may assume that $J_\haC\cap J_\hD=\{O\}$.
 If $\hD$ is not contained in the component of $\complex\setminus J_\haC\cup [0,\infty)\times \{0\}$ containing $1+2\pi i$,
  then $\haC$ is contained in the component of $\complex\setminus J_\hD\cup [0,\infty)\times \{0\}$ containing
  $1+2\pi i$ and $\haC$ lies above $\hD$.
\end{proof}

Our goal is to show that the condition that $\hA$ lies above $\hB$ is preserved under the lift of
the isotopy $h^t$.

 \begin{lem}\label{lifth} Suppose $h^t:\bd(X)\to\complex$ is an isotopy such that $h^0=id_{\bd(X)}$,
 $O\in \bd(X)$ and $h^t(O)=O$ for all $t$.
 Then there exist an isotopy $\hh^t:\bd(\hX)\to \complex$  which lifts $h^t$ such that $\hh^0=id_{\bd(\hX)}$.
 \end{lem}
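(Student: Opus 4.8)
The plan is to lift the isotopy one coordinate at a time using the standard path-lifting property of the covering map $\exp$, applied not to individual points but to the whole map $h:\bd(X)\times[0,1]\to\complex\setminus\{O\}$ (note that since $O$ is fixed and, by convention, $O\in\bd(X)$ with $X^t\subset B(O,1)$, the values $h^t(x)$ for $x\neq O$ never hit $O$, so $h$ restricted to $(\bd(X)\setminus\{O\})\times[0,1]$ maps into $\complex\setminus\{O\}$ and the composition $\exp^{-1}$ makes sense). First I would fix a point $\hat O\in\exp^{-1}(O)$; then $\bd(\hX)=\exp^{-1}(\bd(X)\setminus\{O\})$, together with the discrete fiber over $O$, is the covering space of $\bd(X)$ we must map to. The idea is: for each $\hx\in\bd(\hX)$ lying over $x=\exp(\hx)$, the path $t\mapsto h^t(x)$ in $\complex\setminus\{O\}$ starts at $x$; lift this path to $\complex$ starting at $\hx$ and call the lift $\hat\gamma_{\hx}$; then set $\hh^t(\hx)=\hat\gamma_{\hx}(t)$.

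The key steps, in order, are: (1) verify the hypotheses of the covering homotopy theorem — here the relevant map is $H:\bd(\hX)\times[0,1]\to\complex\setminus\{O\}$ given by $H(\hx,t)=h^t(\exp(\hx))$, which is continuous, and at $t=0$ it equals $\exp$ restricted to $\bd(\hX)$, which lifts to the identity $\mathrm{id}_{\bd(\hX)}$; (2) apply the homotopy lifting property of $\exp:\complex\to\complex\setminus\{O\}$ to obtain a continuous $\hh:\bd(\hX)\times[0,1]\to\complex$ with $\exp\circ\hh=H$ and $\hh^0=\mathrm{id}_{\bd(\hX)}$ (this is where continuity in both variables jointly is delivered for free, rather than having to be checked by hand); (3) check that each $\hh^t$ is injective — this follows because $h^t$ is injective on $\bd(X)$ and because the deck-transformation group $z\mapsto z+2\pi i k$ acts, so two points $\hx_1\neq\hx_2$ of $\bd(\hX)$ either lie over distinct points of $\bd(X)$ (then $\exp\hh^t(\hx_1)=h^t(\exp\hx_1)\neq h^t(\exp\hx_2)=\exp\hh^t(\hx_2)$) or differ by some $2\pi i k$, in which case uniqueness of lifts forces $\hh^t(\hx_1)=\hh^t(\hx_2)+2\pi i k\neq\hh^t(\hx_2)$; (4) check that $\hh^t$ is a homeomorphism onto its image $\bd(\hX^t)$: surjectivity onto $\bd(\hX^t)=\exp^{-1}(\bd(X^t)\setminus\{O\})$ follows by running the same lifting argument with the isotopy reversed (lift $t\mapsto h^{1-t}\circ(h^1)^{-1}$ applied to $\bd(X^1)$), giving a two-sided inverse, and continuity of the inverse comes from the same homotopy-lifting machinery; (5) handle the fiber over $O$: since $h^t(O)=O$ we simply want $\hh^t$ to fix each point of $\exp^{-1}(O)$ pointwise — but these points are \emph{not} in $\bd(\hX)$ as defined (which excludes the fiber over $O$), so depending on the intended meaning of $\bd(\hX)$ we either include them and lift the constant path at $O$ (trivially giving the identity on that fiber, which is consistent with the $t=0$ normalization), or note they are irrelevant.

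The main obstacle I expect is step (5) together with a subtlety hidden in step (1): the set $\bd(X)\setminus\{O\}$ need not be simply connected or even locally nice, and more importantly it may fail to be a manifold, so one should be careful that "homotopy lifting property of $\exp$" is being invoked in the correct generality. The clean way around this is to observe that $\exp$ is a covering map of the \emph{whole} punctured plane $\complex\setminus\{O\}$, so the homotopy lifting property applies to maps from \emph{any} topological space $Y$ into $\complex\setminus\{O\}$ with a prescribed lift at time $0$ — no hypothesis on $Y$ is needed for existence and uniqueness of the lift, only for niceness of the resulting object, and here $Y=\bd(\hX)$ (or $\bd(X)\setminus\{O\}$) is whatever it is. A secondary point to watch is the behavior near $O$: as $x\to O$ in $\bd(X)$ the lifts $\hx$ escape to $\pi_1=-\infty$, and one should confirm that the bound $X^t\subset B(O,1)$ makes the family $\{\hh^t\}$ behave coherently there (it does: $\pi_1(\hh^t(\hx))<0$ throughout, matching the earlier observation that $\max\pi_1(\hX^t)<0$), but this is a routine verification rather than a genuine difficulty. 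I would therefore present the proof as: fix the base lift at $t=0$, invoke the covering homotopy theorem for $\exp$ to get $\hh$, then spend the bulk of the argument on bijectivity and bicontinuity of each $\hh^t$ via the reversed-isotopy inverse.
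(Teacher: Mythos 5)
Your proposal is correct and follows essentially the same route as the paper: lift each track $t\mapsto h^t(\exp(\hx))$ through the covering map $\exp$ with initial point $\hx$, use uniqueness of lifts (equivalently, equivariance under the deck translations $z\mapsto z+2\pi i k$) to get injectivity of each $\hh^t$, and conclude that the lifted family is an isotopy; your packaging via the covering homotopy theorem applied to all of $\bd(\hX)$ at once, and the reversed-isotopy argument for the inverse, are just more explicit versions of the continuity checks the paper leaves to the reader.
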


 \begin{proof}
For each $x\in \bd(X)\setminus \{O\}$ and each $\hx\in\exp^{-1}(x)$ the path $h|_{\{x\}\times [0,1]}$ has a unique lift
to a path $\hh_{\hx}:[0,1]\to\complex$ with initial point $\hx$. Define $\hh^t(\hx)=\hh_{\hx}(t)$.
By uniqueness of lifts, $\hh^t$ is one-to-one. It now follows easily that $\hh^t$ is an isotopy
of $\bd(\hX)$ lifting $h^t$ with $\hh^0=id_{\bd(\hX)}$.
\end{proof}

The following easy Lemma follows immediately from the fact that $h^t(O)=O$ for all $t$ and that $h^t$
 is uniformly continuous.
\begin{lem}\label{left} Suppose that $h^t(O)=O$ for all $t$ and let $\hh^t:\bd(\hX)\to\complex$ be the isotopy
which is the lift of $h^t$ to $\bd(\hX)=\exp^{-1}(\bd(X)\setminus\{O\})$ such that $\hh^0=id_{\bd(\hX)}$.
 Denote $\hh^t(\hx)$ by $\hx^t$.
For all $\e>0$ there exists $\da\in \real$ such that if there exists $t_0\in[0,1]$ such
that $\hx^{t_0}\in \hX^{t_0}$ and
    $\pi_1(\hx^{t_0})\le \da$, then $\pi_1(\hx^t)<\e$ for all $t\in[0,1]$.
    In otherwords, if there exists
    $t_0$ such that $\pi_1(\hx^{t_0})\ge \e$, then $\pi_1(\hx^t)>\da$ for all $t\in[0,1]$.
\end{lem}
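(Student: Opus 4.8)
The plan is to reduce the statement to a uniform-continuity estimate for the original isotopy $h^t$ near the fixed point $O$, transported to the covering space via the exponential map. First I would observe that the two displayed assertions in the lemma are logically equivalent (contrapositive of a statement quantified over $t_0$), so it suffices to prove the second: given $\e>0$, produce $\da$ such that whenever $\pi_1(\hx^{t_0})\ge\e$ for some $t_0$, one has $\pi_1(\hx^t)>\da$ for all $t$. The key point is that $\pi_1(\hx)$ controls $|\exp(\hx)| = e^{\pi_1(\hx)}$, i.e. the Euclidean distance $d(x,O)$ of the downstairs point $x=\exp(\hx)$ from the origin, with $x\in\bd(X)$.

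Next I would use that $h:\bd(X)\times[0,1]\to\complex$ is continuous on a compact set, hence uniformly continuous, and that $h^t(O)=O$. Fix $\e>0$. Choosing $\da$ will amount to the following: set $\rho_1 = e^{\e}>0$, which bounds $|x^{t_0}|$ from below when $\pi_1(\hx^{t_0})\ge\e$ (here $x^{t} := \exp(\hx^t) = h^t(x)$ since $\hh^t$ lifts $h^t$). By uniform continuity of $h$ (equivalently, continuity of the map $t\mapsto h^t(x)$ uniformly in $x$), there is $\rho_2>0$ such that $|h^t(x) - h^{s}(x)| < \rho_1/2$ whenever $|t-s|$ is small, and more to the point, since $h^t(O)=O$ and $h^t$ is a homeomorphism, a point $x$ with $|h^{t_0}(x)|\ge\rho_1$ cannot have $|h^t(x)|$ arbitrarily small for any other $t$: indeed, the set $\{h^t(x) \mid t\in[0,1]\}$ is a connected compact subset of $\complex\setminus\{O\}$ — wait, it need not avoid $O$ a priori, but since $x\in\bd(X)$, $x\neq O$ (as $\pi_1(\hx^{t_0})$ is finite), and $h^t$ is injective with $h^t(O)=O$, we get $h^t(x)\neq O$ for all $t$. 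So $\{x^t \mid t\in[0,1]\}$ is a compact connected subset of $\complex\setminus\{O\}$; let $m(x) = \min_t |x^t| > 0$. The function $x\mapsto m(x)$ is lower semicontinuous and positive on the compact set $\{x\in\bd(X)\mid |x|\ge\rho_1\}$... actually the cleanest route is: by uniform continuity, choose a partition $0=t_0<\dots<t_N=1$ fine enough that $|h^{t_{i+1}}(x)-h^{t_i}(x)|<\rho_1/(2N)$ for all $x$ and all $i$; then if $|x^{t_j}|\ge\rho_1$ for some $j$, telescoping gives $|x^{t_i}|\ge\rho_1/2$ for all $i$, and a further uniform-continuity step fills in between partition points to get $|x^t|\ge\rho_1/4$ for all $t$. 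Finally set $\da = \log(\rho_1/4) = \e - \log 4 \in\real$; then $|x^t|\ge\rho_1/4$ means $\pi_1(\hx^t) = \log|x^t| \ge \da$, and shrinking $\da$ slightly yields the strict inequality $\pi_1(\hx^t)>\da$.

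I expect the main (and only real) obstacle to be bookkeeping: one must be careful that $x^t := \exp(\hx^t)$ genuinely equals $h^t(x)$ — this is exactly the content of Lemma~\ref{lifth}, that $\hh^t$ is a lift of $h^t$ — and that $x^{t_0}\in\bd(X^{t_0})=h^{t_0}(\bd(X))$ so the point lies in the domain where uniform continuity of $h$ applies. The hypothesis "$\hx^{t_0}\in\hX^{t_0}$" is there precisely to guarantee $x^{t_0}$ is the $h^{t_0}$-image of a boundary point of $X$, so that the whole trajectory $t\mapsto x^t$ stays in the image of $h$ and the compactness/uniform-continuity machinery is available. Once that identification is in place, the estimate is the routine telescoping argument above, and the passage from the downstairs Euclidean bound $|x^t|\ge c>0$ to the upstairs bound $\pi_1(\hx^t)\ge\log c$ is immediate from $|\exp(\hx^t)| = e^{\pi_1(\hx^t)}$.
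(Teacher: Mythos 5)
Your setup is fine: the identification $\exp\circ\hh^t=h^t\circ\exp$, hence $\pi_1(\hx^t)=\log|h^t(x)|$ for $x=\exp(\hx)$, and the reduction to the contrapositive are exactly right, and the route you sketch and then abandon (the trajectory $\{h^t(x)\mid t\in[0,1]\}$ is a compact connected subset of $\complex\setminus\{O\}$ because each $h^t$ is injective and fixes $O$; then use compactness in $x$) is essentially the argument the paper intends when it says the lemma ``follows immediately from $h^t(O)=O$ and uniform continuity''. The telescoping argument you substitute for it, however, does not work. First, the choice of partition is circular: you require each step to move every point by less than $\rho_1/(2N)$, but refining the partition increases $N$ and tightens the requirement at the same rate; uniform continuity gives a modulus $\omega(1/N)\to 0$ but not $N\,\omega(1/N)\to 0$ (think of $\omega(\delta)\sim\sqrt{\delta}$), so such a partition need not exist. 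Second, and more fundamentally, the telescoping step never uses $h^t(O)=O$ or injectivity: it tries to bound the total displacement of the point by $3\rho_1/4$ and concludes $|x^t|\ge\rho_1/4$, i.e.\ a universal $\da=\e-\log 4$. For the reading of the lemma that is actually used in Theorem~\ref{ptpreserved} (where the cutoff is an arbitrary real number) this is false: an isotopy fixing $O$ can radially contract by a factor of $100$, lowering $\pi_1$ by $\log 100>\log 4$. (With the paper's normalization $X^t\subset B(O,1)$ and $\e>0$ taken literally, your claim survives only because the hypothesis is vacuous, which is clearly not the intended content.) The correct $\da$ must depend on the isotopy $h$, not only on $\e$, and the mechanism producing it is precisely the fixed point together with injectivity and compactness.

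To repair the proof, return to your first sketch and correct one detail: since the hypothesis is $|h^{t_0}(x)|\ge e^{\e}$ for \emph{some} $t_0$, the compact set to work on is $K_\e=\{x\in\bd(X)\mid \max_{t}|h^t(x)|\ge e^{\e}\}$, not $\{x\mid |x|\ge e^{\e}\}$. The function $m(x)=\min_{t}|h^t(x)|$ is continuous (a minimum over a compact parameter of a jointly continuous function) and vanishes only at $x=O$, because $h^t$ is injective and $h^t(O)=O$; since $O\notin K_\e$ and $K_\e$ is compact, $\inf_{K_\e}m>0$, and any $\da<\log\inf_{K_\e}m$ works. Equivalently, argue by sequential compactness: if $|h^{s_n}(x_n)|\ge e^{\e}$ while $|h^{t_n}(x_n)|\to 0$, pass to convergent subsequences to obtain $x$, $s$, $t$ with $h^t(x)=O$ and $|h^s(x)|\ge e^{\e}$; injectivity forces $x=O$, hence $h^s(x)=O$, a contradiction.
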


Given the existence of the lifted isotopy $\hh^t$ we will use  similar notation as for $h^t$:
for any set $\hA\subset \bd(\hX)$ we denote by $\hA^t$ the set $\hh^t(\hA)$. Recall that
$U^t$ is the unbounded component of $\complex\setminus h^t(\bd(X))$, $X^t=\complex\setminus U^t$
and $\hX^t=\exp^{-1}(X^t\setminus\{O\})$.
Also, if $\haC^0$ is a component
of $\hX^0$ choose a point $\hx^0\in \bd(\hX^0)\cap \haC$.  Then we denote by $\haC^t$ the component of $\hX^t$
containing the point $\hx^t=\hh^t(\hx)$.
Next we show that the notion of the component $\haC$ being above $\hD$ in $\hX$  is preserved throughout the isotopy
$\hh$.

\begin{lem} \label{order} Let $\haC=\haC^0$ and $\hD=\hD^0$ be components of $\hX^0$ such that $\haC$ lies above $\hD$.
Then $\haC^t$ lies above $\hD^t$ for each $t\in [0,1]$.
\end{lem}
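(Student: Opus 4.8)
The plan is to show that the "above" relation between components is preserved by following the path/ray witnessing it and tracking what happens under the isotopy. Recall the setup: $\haC^0$ lies above $\hD^0$ means there is a path $s:[0,1]\to\pi_1^{-1}((-\infty,0])\setminus\haC^0$ with $s(0)\in\hD^0$, $s(1)=O$, and if $R^0=s([0,1])\cup[0,\infty)\times\{0\}$, then $\haC^0$ lies in the unbounded component of $\complex\setminus[\hD^0\cup R^0]$ containing $1+2\pi i$.

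The first step is to transport the witnessing path. By Lemma~\ref{lifth} the isotopy $\hh^t$ of $\bd(\hX)$ is defined and fixes $O$; the point $s(0)\in\hD^0\cap\bd(\hX^0)$ moves to $s(0)^t=\hh^t(s(0))\in\hD^t$. The subtle point is that $s$ is a path in $\complex$, not in $\bd(\hX)$, so it is not literally moved by $\hh^t$. Instead I would use the fact, noted in the excerpt, that $\pi_1^{-1}((-\infty,0])\setminus\haC^t$ is simply connected (each component of $\hX^t$ is non-separating, so its complement is homeomorphic to $\complex$, and the half-plane minus that component is still simply connected — this needs a short argument using that $\pi_1(\haC^t)=(-\infty,m_{\haC^t}]$). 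In that simply connected set I can choose, for each $t$, a path $s^t$ from $s(0)^t$ to $O$; by the first Lemma of the section (well-definedness), the component of $\complex\setminus[\hD^t\cup R^t]$ containing $1+2\pi i$, with $R^t=s^t([0,1])\cup[0,\infty)\times\{0\}$, does not depend on the choice of $s^t$. So it suffices to exhibit, for each fixed $t$, one such path with the required separation property, or alternatively to run a connectedness-in-$t$ argument.

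The cleaner route, which I would actually pursue, is a connectedness argument on $[0,1]$. Let $T=\{t\in[0,1]\mid\haC^t\text{ lies above }\hD^t\}$. Then $0\in T$ by hypothesis. To see $T$ is open and closed, I would fix $t_0$ and use continuity of $\hh$ together with the compactness of $\bd(\hX^{t_0})$: a witnessing path $R^{t_0}$ for the pair at time $t_0$ stays at uniformly positive distance from $\haC^{t_0}$ (away from a neighborhood of $O$, using Lemma~\ref{left} to control the behavior near $\pi_1=-\infty$), hence for $t$ near $t_0$ it still misses $\haC^t$ and still has its endpoint in $\hD^t$; the complementary component containing $1+2\pi i$ varies continuously, so the separation property persists on a neighborhood of $t_0$. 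Symmetry of the argument (replacing $t_0$ by a limit point) gives closedness. Since $[0,1]$ is connected, $T=[0,1]$. Here Lemma~\ref{left} is exactly what we need: it guarantees that a point of $\bd(\hX)$ which has moved far to the left at some time was far to the left at all times, so the "tails" of the components $\haC^t$ cannot sweep across the bounded witnessing arc $s^{t_0}([0,1])$.

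The main obstacle I anticipate is the non-compactness caused by the components $\haC^t,\hD^t$ being unbounded rays in the $\pi_1\to-\infty$ direction: separation statements in $\complex$ are delicate when the separating sets are noncompact, and one must be careful that the relevant complementary component "containing $1+2\pi i$" is stable. I would handle this by working on the sphere or, more elementarily, by using the explicit structure $\pi_1(\haC^t)=(-\infty,m_{\haC^t}]$ to truncate: for a large $M$, the part of $\haC^t$ with $\pi_1\le -M$ lies in a region disjoint from the compact witnessing arc (by Lemma~\ref{left} applied with a suitable $\e$), so all the action happens in a bounded region where ordinary plane separation and continuity arguments apply. A secondary, more routine technical point is verifying that $\pi_1^{-1}((-\infty,0])\setminus\haC^t$ is simply connected for every $t$, but this follows from $\haC^t$ being a non-separating continuum in $\complex$ with $\pi_1(\haC^t)$ an interval unbounded to the left, together with the fact that the vertical line $\pi_1^{-1}(0)$ meets $\haC^t$ in at most its "top" and can be pushed off.
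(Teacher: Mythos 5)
Your argument is essentially the paper's: the paper likewise reduces to showing that the relation persists on a short time interval (your open--closed connectedness step), fixes a witnessing ray $R$ and an arc $A$ joining $\haC^{0}$ to $1+2\pi i$ in $\complex\setminus[\hD^{0}\cup R]$, truncates the unbounded tails at a level $2a$ via Lemma~\ref{left}, and uses uniform continuity of $\hh^t$ to keep the two sides a definite distance apart. The only detail to make explicit is that $R$ and $A$ are fixed subsets of the plane, so their endpoints must be re-attached to the moving components by the short segments $\hd^{0}\hd^{t}$ and $\hc^{0}\hc^{t}$ (as the paper does) before extracting the new witnessing ray and arc at time $t$.
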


\begin{proof}It suffices to show that there exists $0<t_0$ such that for all $t\le t_0$
$\haC^t$ lies above $\hD^t$.  Let $R=s([0,1])\cup [0,\infty)\times\{0\}$ be a  piecewise linear  ray
landing on $\hd^0\in \hD^0$
which satisfies the conditions
of Definition~\ref{dorder} and such that $R\cap \haC^0=\0$ and $R\cap \hD^0=\{\hd^0\}$.
 Then $R\cup \hD^0$ has exactly two complementary domains
and each is homeomorphic to $\complex$. Hence there exists an arc $A\subset \complex\setminus [\hD^0\cup R]$
 joining a point $\hc^0\in \haC^0$ to the point $1+2\pi i$. Choose $a<0$ such that
  $A\cup R\subset \pi^{-1}([a,\infty))$. Choose $\e<(1/3)\ d(A\cup [\pi_1^{-1}([2a,\infty))\cap \haC^0],
  R\cup [\pi_1^{-1}([2a,\infty))\cap \hD^0])$.
 Let $0<t_0$ such that for each  $\hx\in \bd(\hX)\cap \pi_1^{-1}([2a,\infty))$, $|\hh^t(\hx)-\hh^0(\hx)|<\e/2$
  and
 $\pi_1(\hh^t(\hx))<a$ for all $\hx\in \pi_1^{-1}((-\infty,2a])\cap \bd(\hX)$  for all $t$.
  Then for all $t\le t_0$,
 $\haC^t\cup \hc^0 \hc^t\cup A$ is connected, closed and disjoint from $\hD^t\cup \hd^0 \hd^t\cup R$.
 The first set contains an arc
 $A^*$ from $\hc^t$ to the point $1+2\pi i$ and the latter set
 contains a half ray $R^*$ satisfying the conditions in ~\ref{dorder} from the point $\hd^t\in \hD^t$ to $\infty$.
 Since $1+2\pi i$ is above $R^*$ and $A^*\cap [\hD^t\cup R^*]=\0$, $\haC^t$ is above $\hD^t$ for all $t\in[0,t_0]$.
 \end{proof}

\begin{lem} \label{non-interlaced} Suppose that $\hX^0=\hA^0\cup \hB^0$
are disjoint closed sets such that  $\hA^0$
lies above  $\hB^0$. Then for each $t$,
$\hX^t=\hA^t\cup \hB^t$ and $\hA^t$ and $\hB^t$ are disjoint, closed and non-interlaced sets.
\end{lem}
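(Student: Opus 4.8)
The plan is to deduce both conclusions from the order-preservation Lemma~\ref{order}. First, the decomposition $\hX^t = \hA^t \cup \hB^t$ into disjoint closed sets is essentially formal: $\hh^t$ is a homeomorphism of $\bd(\hX)$ onto $\bd(\hX^t)$ (Lemma~\ref{lifth}), so it carries the components of $\hX^0$ bijectively onto the components of $\hX^t$, and the partition of the component set induced by $\hA^0,\hB^0$ is transported to a partition of the component set of $\hX^t$; we define $\hA^t$ (resp.\ $\hB^t$) to be the union of the components $\haC^t$ for $\haC^0\subset\hA^0$ (resp.\ $\subset\hB^0$). These are disjoint, and each is closed because $\hX^t$ is closed and, by Lemma~\ref{left}, only finitely many components of $\hX^t$ meet any half-plane $\pi_1^{-1}([a,\infty))$ — so locally the union is a finite union of components, hence closed. (Alternatively, $\hX^t\setminus\hA^t=\hB^t$ and vice versa, and each is open in $\hX^t$ by the same local finiteness, so both are closed.)

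Next, the crucial point: $\hA^t$ lies above $\hB^t$. By Lemma~\ref{order}, for every component $\haC^0\subset\hA^0$ and every component $\hD^0\subset\hB^0$ we have $\haC^0$ lies above $\hD^0$, hence $\haC^t$ lies above $\hD^t$, for all $t$; by the trichotomy in the preceding Lemma this is exactly the statement that every component of $\hA^t$ lies above every component of $\hB^t$, i.e.\ $\hA^t$ lies above $\hB^t$ in the sense of Definition~\ref{dorder}.

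Finally I would show that "lies above" implies "non-interlaced", which will be the main point of the argument and the step most worth spelling out (it is plausibly the content of a Theorem~\ref{connectedE} referenced but not reproduced in this excerpt, so here I only sketch it). Fix $\hz\in\complex\setminus[\hA^t\cup\hB^t]$ and put $r=r(\hz)=d(\hz,\hA^t\cup\hB^t)$; we must exhibit disjoint closed connected arcs of the circle $S(\hz,r)$ containing $\hA^t\cap S(\hz,r)$ and $\hB^t\cap S(\hz,r)$ respectively. Using that $\hA^t$ lies above $\hB^t$, pick a ray $R$ as in Definition~\ref{dorder} separating a fixed component of $\hB^t$ from $1+2\pi i$, with $\hA^t$ on the $1+2\pi i$ side; the key geometric observation is that because all of $\hA^t$ is on one side of such a separating ray and all of $\hB^t$ on the other, the two sets cannot "interleave" on any circle $S(\hz,r)$ of radius equal to the distance from its center to $\hA^t\cup\hB^t$ — if points $a_1,a_2\in\hA^t$ and $b_1,b_2\in\hB^t$ occurred in alternating cyclic order on $S(\hz,r)$, one could join $a_1$ to $a_2$ inside $\ol{B(\hz,r)}$ (along a chord, which stays in the open ball except at its endpoints) and $b_1$ to $b_2$ inside $\ol{B(\hz,r)}$ by a chord, and these chords must cross, contradicting that $\hA^t$ and $\hB^t$ lie on opposite sides of a Jordan-type separator through the disk. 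The one technical wrinkle is that the chords a priori only avoid $\hA^t\cup\hB^t$ in the \emph{open} ball, so I would argue with the order structure directly: the separating ray $R$, together with the above/below description, forces $\hA^t\cap S(\hz,r)$ and $\hB^t\cap S(\hz,r)$ into two complementary arcs of $S(\hz,r)$ cut out by where $R$ (extended appropriately) meets the circle, and then each of $\hA^t\cap S(\hz,r)$, $\hB^t\cap S(\hz,r)$ is contained in the closure of its arc, which are disjoint closed connected subsets of $S(\hz,r)$ — precisely the non-interlaced condition. The hard part is handling the boundary case where $R$ or a component of $\hB^t$ actually touches $S(\hz,r)$, which is dealt with by a small perturbation of the center $\hz$ (or of $R$) keeping $r(\hz)$ the distance to $\hA^t\cup\hB^t$, using the local finiteness from Lemma~\ref{left} to ensure the perturbation stays away from all but finitely many components.
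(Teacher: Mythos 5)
Your first two steps are broadly in line with the paper: the paper also disposes of the decomposition quickly ("since $\hh^t$ is an isotopy") and gets "every component of $\hA^t$ lies above every component of $\hB^t$" straight from Lemma~\ref{order}. But your justification of closedness is wrong as stated: it is not true that only finitely many components of $\hX^t$ meet a half-plane $\pi_1^{-1}([a,\infty))$ (take $X$ a straight segment with endpoint $O$; then $\hX$ consists of infinitely many horizontal half-lines all attaining the same maximal abscissa), and Lemma~\ref{left} gives uniform control of the $\pi_1$-coordinate under the lifted isotopy, not any finiteness of components. Also note that $\hh^t$ is defined only on $\bd(\hX)$ while $\hA^0,\hB^0$ are subsets of the filled set $\hX^0$, so the transport of components is via tracked boundary points, not via a homeomorphism of $\hX^0$ onto $\hX^t$.

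The genuine gap is in your main step, "lies above implies non-interlaced". You start from "a ray $R$ \dots with all of $\hA^t$ on the $1+2\pi i$ side and all of $\hB^t$ on the other". Definition~\ref{dorder} provides no such ray: it is a pairwise condition in which the path $s$ depends on the pair $(\haC,\hD)$ and is only required to avoid the single component $\haC$, so it may cross other components of $\hA^t$ and of $\hB^t$ and separates nothing globally. A single set separating $\hA^t$ from $\hB^t$ is exactly what Theorem~\ref{connectedE} produces, and in the paper that theorem is proved \emph{after} and \emph{by means of} the present lemma (via Brouwer's result that $E(\hA,\hB)$ is a $1$-manifold for non-interlaced sets, and Lemma~\ref{oneside}); assuming it here is circular. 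The paper's proof needs no global separator: fix $\hw\in E(\hA^t,\hB^t)$ and the minimal ball $K$ about $\hw$ whose boundary circle $S$ meets both sets; if $\hx,\hx'\in\hA^t\cap S$ were separated in $S$ by $\hy,\hy'\in\hB^t\cap S$, then the chord $\hy\hy'$ lies in $\ol{K}$ and hence meets $\hX^t$ only at its endpoints, so $\haC_\hy\cup\haC_{\hy'}\cup\hy\hy'$ separates (say) $\hx$ from $1+2\pi i$; an auxiliary arc from $O$ to the chord, avoiding $\haC_\hy\cup\haC_{\hy'}$ (unicoherence), then yields a ray witnessing that $\haC_\hx$ lies \emph{below} $\haC_\hy$, contradicting Lemma~\ref{order}. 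This per-pair contradiction is the idea missing from your sketch, and your concluding perturbation remark does not repair it: the difficulty is not a boundary-case degeneracy but the nonexistence, at this stage, of the separating ray you start from.
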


\begin{proof} By Lemma~\ref{order}, every component of $\hA^t$ lies above every component of $\hB^t$ for all $t$.
Since $\hh^t$ is an isotopy it only remains to   show that $\hA^t$ and $\hB^t$ are non-interlaced.
To see this fix $t$, let $\hw\in E(\hA^t,\hB^t)$ and let $K\subset \complex \setminus \hA^t\cup \hB^t$
 be the minimal open ball with center
$\hw$ whose boundary $S$ meets both $\hA^t$ and $\hB^t$. Suppose that there exist $\hx,\hx'\in S\cap \hA^t$ and
$\hy,\hy'\in S\cap \hB^t$ such that $\{\hy,\hy'\}$ separates $\hx$ and $\hx'$ in $S$. For $\hz\in \hX^t$,
let $\haC_\hz$ denote the component of
$\hX^t$ which contains the point $\hz$. Suppose, without loss of generality, that $\haC_\hy$ lies above $ \haC_{\hy'}$.
We may suppose that  $\haC_\hy\cup \haC_{\hy'}\cup \hy\hy'$ separates $\hx$ from $1+2\pi i$ in
 $\complex$. Since $\haC_\hy\cup \haC_{\hy'}$ does not separate $\complex$ by unicoherence, we can choose an arc
 $D$ in $\complex\setminus[\haC_\hy\cup \haC_{\hy'}]$ irreducible from $O$ to $\hy\hy'$ such that $\pi_1(D)\subset (-\infty,0]$.
 Let $\{d\}=D\cap \hy\hy'$. Then $\haC_\hy\cup \hy d\cup D\cup [0,\infty)\times\{0\}$ separates $\hy'$,
  and hence also $\hx$,
 from $1+2\pi i$ and $\haC_\hx$ is below $\haC_\hy$. This contradicts Lemma~\ref{order} and completes the proof.
\end{proof}

\begin{lem} \label{oneside}Suppose $\hX=\hA\cup \hB$, where
$\hA$ and $\hB$ are disjoint closed   subsets of $\complex$ such that $\hA$ lies above $\hB$.
Let $E$ be a component of $E(\hA,\hB)$. Then $E$ is a closed ray.
If $e\in E$ and $r=d(e,\hA\cup \hB)$, then there exist disjoint irreducible arcs or points
$J_\hA$ and $J_\hB$ in $S(e,r)$ such that $\hA\cap S(e,r)\subset J_\hA$ and $\hB\cap S(e,r)\subset J_\hB$,
and $E$ separates $J_\hA$ from $J_\hB$ in $\complex$.

\end{lem}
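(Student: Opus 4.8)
The plan is to deduce this from the already–established facts: by Lemma~\ref{non-interlaced}, $\hA$ and $\hB$ are disjoint, closed and non-interlaced, so Brouwer's theorem \cite{brou05}[Theorem 3.4.4] applies and $E(\hA,\hB)$ is a $1$-manifold; thus each component $E$ is either a closed ray, an open ray, a half-open ray, or a simple closed curve. I would first rule out the possibility that $E$ is a simple closed curve and that $E$ has an endpoint (i.e.\ $\ol{E}\setminus E\neq\0$), so that $E$ is necessarily a \emph{closed} ray in the sense defined in the introduction. The point-separation statement is then a local/global consequence of non-interlacedness together with the ``above'' ordering, which forces the two closed connected arcs on $S(e,r)$ carrying $\hA$ and $\hB$ to sit on genuinely opposite sides of $E$.

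For the first part, I would argue as follows. Both $\hA$ and $\hB$ are unbounded: each contains a full horizontal ray $\pi_1^{-1}((-\infty,m])$ inside some component (recall $\pi_1(\haC)=(-\infty,m_{\haC}]$ for each component $\haC$ of $\hX$, established before Definition~\ref{dorder}, and this passes to $\hX^t$). Moreover $\hA$ lies entirely above $\hB$ in the plane, so $E(\hA,\hB)$ is "trapped" between them. To see $E$ is unbounded and has no endpoint: if $e$ is an endpoint of a component $E$, then arbitrarily near $e$ in $\complex\setminus E$ one can pass from the $\hA$-side to the $\hB$-side, and a short arc near $e$ together with the local structure of $E(\hA,\hB)$ as a $1$-manifold would produce a point of $E(\hA,\hB)$ not on $E$ arbitrarily close to $e$, contradicting that $E$ is a (relatively closed, open) component of a $1$-manifold — more robustly, I would show directly via the minimal-ball construction that every $w\in E$ has a two-sided neighbourhood in $E(\hA,\hB)$ and hence $E(\hA,\hB)$ has no endpoints at all, so its components are rays or circles. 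A simple closed curve component $E$ would bound a disk; since $\hA$ lies above $\hB$ and both are unbounded and connected-up-to-the-ordering, one of $\hA,\hB$ would have to be enclosed by $E$, but each has points with $\pi_1\to-\infty$, so $E$ (being bounded) cannot separate such a set from $\infty$ — contradiction. Hence $E$ is a closed ray.

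For the second part, fix $e\in E$ and let $r=d(e,\hA\cup\hB)$, so $S=S(e,r)$ meets both $\hA$ and $\hB$. By non-interlacedness, $\hA\cap S$ and $\hB\cap S$ lie in two disjoint closed connected subarcs of $S$; take $J_\hA$ (resp.\ $J_\hB$) to be the \emph{irreducible} subarc (or point) of $S$ containing $\hA\cap S$ (resp.\ $\hB\cap S$) — these exist and are disjoint precisely because $\hA\cap S$ and $\hB\cap S$ are each contained in a closed connected piece and the two pieces are disjoint. It remains to see that $E$ separates $J_\hA$ from $J_\hB$ in $\complex$. The two arcs $S\setminus(J_\hA\cup J_\hB)$ split $S$ into two open arcs $\ga_1,\ga_2$, each joining an endpoint of $J_\hA$ to an endpoint of $J_\hB$, and each $\ga_i$ misses $\hA\cup\hB$; the open ball $B(e,r)$ also misses $\hA\cup\hB$. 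Following $E$ outward from $e$ in either direction, $E$ must exit the ball $B(e,r)$ through $S$, and it can only cross $S$ within $\ga_1\cup\ga_2$ (an equidistant point on $J_\hA$ or $J_\hB$ would contradict irreducibility, since those arcs abut $\hA$ or $\hB$). A continuity/connectedness argument — $E$ is a ray through $e$ with two ends going to $\infty$, each end staying in $E(\hA,\hB)\subset\complex\setminus(\hA\cup\hB)$ — then shows the two ends of $E$ leave $B(e,r)$ through the two \emph{different} arcs $\ga_1$ and $\ga_2$: were both ends to exit through $\ga_1$, one could join $J_\hA$ to $J_\hB$ by an arc in $(\complex\setminus E)$ through $\ga_2$ and $B(e,r)$, and simultaneously on the other side, contradicting that $E$ globally separates $\complex$ into a region containing $\hA$ and one containing $\hB$ (the closed-ray separation statement from the Preliminaries, which applies here via Theorem~\ref{connectedE} after grouping components). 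Hence $J_\hA$ and $J_\hB$ lie in the two different complementary domains of $E$.

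The main obstacle I anticipate is the separation statement in the second part: turning "$E$ is a closed ray and $\hA$ lies above $\hB$" into "$E$ separates $J_\hA$ from $J_\hB$" rigorously, since $\hA$ and $\hB$ need not be connected — only non-interlaced and vertically ordered. The clean route is to invoke the generalization promised in the Preliminaries (Definition~\ref{dorder}, Theorem~\ref{connectedE}) that $E(\hA,\hB)$, under the "above" hypothesis, still separates the plane into an $\hA$-part and a $\hB$-part; granting that, the local picture on $S(e,r)$ forces $J_\hA$ and $J_\hB$ into opposite complementary domains of the single ray $E$. If instead $E(\hA,\hB)$ has several ray components, I would first reduce to the relevant component by noting that the component $E$ through $e$ is the unique one "between" the portions of $\hA$ and $\hB$ near $e$, then apply the separation statement to the pair consisting of $\hA$ together with all of $E(\hA,\hB)$ strictly above $E$, versus $\hB$ together with all of $E(\hA,\hB)$ strictly below $E$.
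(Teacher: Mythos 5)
Your proposal has two genuine gaps, and the more serious one is circular. For the separation statement you ultimately appeal to ``the closed-ray separation statement from the Preliminaries,'' i.e.\ to Theorem~\ref{connectedE}, to conclude that $E$ separates an $\hA$-side from a $\hB$-side; but in the paper Theorem~\ref{connectedE} is proved \emph{after} this lemma and its proof begins by citing Lemma~\ref{oneside} (``each component of $E(\hA,\hB)$ is a closed ray\dots''), so it is not available here, and your fallback (grouping the other components of $E(\hA,\hB)$ above and below $E$) presupposes exactly the structural and separation facts being established. The paper's argument is self-contained and elementary at this point: for $\hz\in J_\hA\setminus\hA$ and $\hw\in\hB$ one checks directly that $d(\hz,\hA)<d(\hz,\hw)$ (if the segment $\hz\hw$ meets $\hA$ this is immediate; otherwise $d(\hz,\hw)>\min\{d(\hz,\ha_1),d(\hz,\ha_2)\}$), so $E(\hA,\hB)\cap J_\hA=\0$, and likewise $E(\hA,\hB)$ meets $\ha_1e\cup e\hb$ only at $e$; since $d(\cdot,\hA)-d(\cdot,\hB)$ has opposite signs at $\ha_1$ and $\hb$, the set $E(\hA,\hB)$ separates them, and by unicoherence of $\complex$ a \emph{single} component separates them --- which must contain $e$, hence is $E$. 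Note that this distance comparison is also what your parenthetical ``an equidistant point on $J_\hA$ or $J_\hB$ would contradict irreducibility'' is silently assuming: irreducibility of $J_\hA$ says nothing by itself about points of $J_\hA\setminus\hA$ being strictly closer to $\hA$ than to $\hB$.

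The closed-ray part is also not closed as written. To exclude a simple closed curve you assert that such a component ``would have to enclose one of $\hA,\hB$,'' justified only by ``connected-up-to-the-ordering''; but $\hA$ and $\hB$ need not be connected, and it is not obvious that the bounded complementary disk of a circle component must meet $\hA\cup\hB$ rather than, say, only other components of $E(\hA,\hB)$ --- you give no argument. In the paper no such claim is needed: once $E$ is known to separate $\ha_1$ from $\hb$, it separates $\haC_{\ha_1}\cup J_\hA$ from $\haC_{\hb}\cup J_\hB$, and both sets are unbounded because every component $\haC$ of $\hX$ satisfies $\pi_1(\haC)=(-\infty,m_{\haC}]$ (this gives unboundedness of the component, not, as you wrote, that it contains a horizontal ray); a compact simple closed curve has only one unbounded complementary domain, so $E$ must be unbounded and is therefore a closed ray. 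In other words, in the correct argument the separation statement is the engine that yields the ray structure; your proposal runs the implication in the opposite direction and, without Theorem~\ref{connectedE}, has no independent proof of either half.
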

\begin{proof} By Lemma~\ref{non-interlaced}, $\hA$ and $\hB$ are non-interlaced.
By \cite{brou05}[Theorem 3.4.4], $E$ is a $1$-manifold. Let $E$ be a component of $E(\hA,\hB)$,
$e\in E$ and $d(e,\hA\cup \hB)=r$. Since $\hA$ and $\hB$ are non-interlaced,
 there exist disjoint irreducible arcs or points
$J_\hA$ and $J_\hB$ in $S(e,r)$ such that $\hA\cap S(e,r)\subset J_\hA$ and $\hB\cap S(e,r)\subset J_\hB$.
Let $\ha_1$ and $\ha_2$
be the endpoints of $J_\hA$. For $\hz\in\hX$, let $\haC_\hz$ be the component of $\hz$ in $\hX$. Let $V$
be the component of $\complex\setminus [\haC_{\ha_1}\cup J_\hA\cup \haC_{\ha_2}]$ containing $e$
 and let $W=\complex\setminus V$.
It follows from the proof of Lemma~\ref{non-interlaced} that $W\cap \hB=\0$.

Choose $\hz\in J_\hA\setminus \hA$ and $\hw\in \hB$. If $\hz \hw\cap \hA\ne\0$, then $d(\hz,\hA)<d(\hz,\hw)$.
If $\hz \hw\cap \hA=\0$, then it follows easily
that $d(\hz,\hw)>\min\{d(\hz,\ha_1),d(\hz,\ha_2)\}$. Hence for all $\hw\in \hB$,
$d(\hz,\hA)<d(\hz,\hw)$ and $E(\hA,\hB)\cap J_\hA=\0$.
Choose $\hb\in J_\hB\cap \hB$.
Note that $E(\hA,\hB)\cap \ha_1e\setminus \{e\}=\0=E(\hA,\hB)\cap e\hb\setminus\{e\}$. Now $E(\hA,\hB)$
separates $\ha_1$ and $\hb$.
By unicoherence of $\complex$ a component of $E(\hA,\hB)$ separates $\ha_1$ and $\hb$.
 Since this component must contain $e$,
$E$ separates $\ha_1$ and $\hb$ in $\complex$. Hence $E$ separates $\haC_{\ha_1}\cup J_\hA$ and $\haC_\hb\cup J_\hB$
which both are unbounded sets. Hence, $E$ is an unbounded closed $1$-manifold, i.e., $E$ is a closed ray.
\end{proof}

\begin{thm}\label{compactV}\label{connectedE} Suppose that $\hX=\hA\cup \hB$,
where $\hA$ and $\hB$ are disjoint, closed, non-empty sets such that $\hA$ lies above $\hB$.
Then $E(\hA,\hB)$ is a closed ray such that $\pi_1(E(\hA,\hB))=(-\infty,\infty)$ and for
$x>0$, $|\pi_1^{-1}(x)\cap E(\hA,\hB)|=1$.
\end{thm}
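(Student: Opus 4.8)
The plan is to build on Lemma~\ref{oneside}, which already gives that every component $E$ of $E(\hA,\hB)$ is a closed ray; what remains is to show there is exactly one such component and to pin down its projection onto the $x$-axis. First I would establish that $E(\hA,\hB)$ is \emph{connected}. Fix any $e_0 \in E(\hA,\hB)$ lying in the component $E$; by Lemma~\ref{oneside} applied at $e_0$ there are disjoint irreducible arcs (or points) $J_\hA, J_\hB$ in the critical sphere $S(e_0,r_0)$ with $\hA \cap S(e_0,r_0)\subset J_\hA$, $\hB\cap S(e_0,r_0)\subset J_\hB$, and $E$ separates $\haC_{\ha_1}\cup J_\hA$ from $\haC_\hb \cup J_\hB$ in $\complex$. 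Now suppose $E'$ is a second component, with a point $e_1\in E'$; since $E'$ is also a closed ray separating the (unbounded) ``$\hA$-side'' from the ``$\hB$-side'', $E$ and $E'$ are disjoint closed rays each separating the same two connected unbounded sets $\hA$, $\hB$ (or rather the saturations $\haC_{\ha_1}\cup J_\hA$ etc.). Two disjoint closed rays in $\complex$ both separating a fixed connected set from another fixed connected set would force one of them to lie ``between'' the other and one of the sets, contradicting that it still separates that same set off; I would make this precise using unicoherence of $\complex$ exactly as in the proof of Lemma~\ref{oneside}, concluding $E' = E$, so $E(\hA,\hB)$ is a single closed ray.

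Next I would handle the projection. For the claim $\pi_1(E(\hA,\hB)) = (-\infty,\infty)$: recall from the discussion preceding Definition~\ref{dorder} that $\pi_1(\hX) = \bigcup_{\haC} (-\infty, m_\haC]$ with every $m_\haC < 0$, so $\hX \subset \pi_1^{-1}((-\infty,0))$ and in particular $\pi_1^{-1}([0,\infty))$ is disjoint from $\hX$. Because $\hA$ lies above $\hB$, translation by $2\pi i$ maps $\hX$ to itself but a point far ``above'' in the $y$-direction near a fixed large $x$-value is closer to $\hA$ than to $\hB$, and far ``below'' it is closer to $\hB$; fixing any $x\ge 0$, the function $y \mapsto d(x+iy,\hA) - d(x+iy,\hB)$ is continuous, and I would argue it is positive for $y \gg 0$ and negative for $y \ll 0$ using the ``above/below'' geometry encoded in Definition~\ref{dorder} (a point with huge positive imaginary part is on the $1+2\pi i$ side of any separating ray-and-continuum configuration). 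By the intermediate value theorem this vertical line meets $E(\hA,\hB)$, so $x \in \pi_1(E(\hA,\hB))$ for every $x\ge 0$; for $x<0$ one gets points of $E(\hA,\hB)$ from the $1$-manifold/closed-ray structure together with the fact that $\pi_1(\haC) = (-\infty,m_\haC]$ makes the equidistant set extend leftward as well (or simply: $E$ is a closed ray separating two unbounded sets both containing points with arbitrarily negative real part, so $E$ itself must reach arbitrarily negative real part). Hence $\pi_1(E(\hA,\hB)) = \real$.

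Finally, for uniqueness of the fiber over $x>0$, i.e.\ $|\pi_1^{-1}(x)\cap E(\hA,\hB)| = 1$: suppose $e = x+iy_1$ and $e' = x+iy_2$ are both in $E(\hA,\hB)$ with $y_1 < y_2$. Since $E(\hA,\hB)$ is a closed ray (a copy of $\real$) separating $\hA$ from $\hB$, and $\hA$ lies strictly above $\hB$, the ray must be ``monotone'' with respect to the above/below order in the sense that the vertical segment $e\,e'$ lies on one side — but the two endpoints being on the \emph{same} vertical line, while $\hA$ is entirely above the ray and $\hB$ entirely below it, would trap a bounded region of the plane between the ray and the segment $ee'$ whose closure misses both $\hA$ and $\hB$; points in that region sufficiently close to the segment would then be equidistant-or-closer in a way inconsistent with $e,e'$ realizing the minimal balls of Lemma~\ref{oneside}. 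More cleanly, I would invoke the separation statement of Lemma~\ref{oneside}: at $e$ the arcs $J_\hA \subset S(e,r)$ with $\hA$-component above and $J_\hB$ below, $E$ separating them, forces the part of $E$ near $e$ to be a graph over the $x$-axis locally; globally, since $E \cong \real$ and $\hA$ is above all of $E$ while $\hB$ is below all of $E$, any vertical line crosses $E$ transversally in exactly one point. I expect this last monotonicity argument — upgrading the local graph-over-$x$-axis property to a global one, i.e.\ genuinely using ``$\hA$ lies above $\hB$'' rather than mere non-interlacing — to be the main obstacle, and I would spell it out via the same unicoherence-plus-separating-ray technique used in Lemmas~\ref{order} and~\ref{oneside}.
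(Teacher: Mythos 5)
Your reduction of the theorem to three claims (connectedness of $E(\hA,\hB)$, full projection onto the $x$-axis, one point per fiber over $x>0$) matches the paper's, but each of your three arguments has a genuine gap, and the two ideas that actually drive the paper's proof are absent. For connectedness you assert that every component of $E(\hA,\hB)$ separates ``the $\hA$-side from the $\hB$-side,'' and that two disjoint closed rays cannot both separate the same pair of sets. Neither holds: Lemma~\ref{oneside} only gives that a component separates $\haC_{\ha_1}\cup J_\hA$ from $\haC_{\hb}\cup J_\hB$, i.e.\ one component of $\hA$ (plus an arc of the critical circle) from one component of $\hB$, not all of $\hA$ from all of $\hB$; and two disjoint closed rays can perfectly well both separate $\hA$ from $\hB$ (think of two parallel lines lying between two horizontal strips), so no purely topological contradiction is available. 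The paper instead (i) produces one distinguished component $E$ that separates all of $\hA$ from all of $\hB$, by working at the rightmost abscissa $\pi_1(\ha)=\max\pi_1(\hX)$, choosing extremal points $\ha\in\hA$, $\hb\in\hB$ on that vertical line with $0<\pi_2(\ha)-\pi_2(\hb)\le 2\pi$ (here the $2\pi i$-periodicity of $\hX$ and the ``lies above'' hypothesis do real work, via an argument in the spirit of Lemma~\ref{non-interlaced}), applying unicoherence to get a component separating $\ha$ from $\hb$; and then (ii) uses the metric, not the topology: once $E$ is known to separate $\hA$ from $\hB$, for any $z\notin E$ the segment from $z$ to its nearest point of the far set must cross $E$, which forces $d(z,\hA)\ne d(z,\hB)$, hence $E(\hA,\hB)=E$. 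Step (ii) is the mechanism your proposal lacks; uniqueness of the component cannot come from separation properties alone.

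The other two claims suffer similarly. Your intermediate-value argument for $[0,\infty)\subset\pi_1(E(\hA,\hB))$ rests on the unproved assertion that, at fixed $x\ge 0$, points with $\pi_2$ large positive are closer to $\hA$ and points with $\pi_2$ large negative are closer to $\hB$. Definition~\ref{dorder} is a separation condition, not a coordinate condition, and Theorem~\ref{connectedE} does not assume conditions (2a)--(2b) of Lemma~\ref{char}, which are precisely the kind of control on how high $\hB$ (resp.\ how low $\hA$) may reach that your assertion presupposes; the paper proves distance comparisons only on the two quadrants based at $\ha$ and $\hb$, to the right of $\max\pi_1(\hX)$, where $\hB\subset V$ and the bound $\pi_2(\ha)-\pi_2(\hb)\le 2\pi$ make them verifiable. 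Finally, for $|\pi_1^{-1}(x)\cap E(\hA,\hB)|=1$ you concede you have no argument, and the technique you propose (unicoherence plus separating rays) is not the right one: the paper's argument is again metric, namely that for $e_1\ne e_2$ in $E$ the open segments joining each $e_i$ to its nearest points of $\hX$ are pairwise disjoint, while for $\pi_1(e)>0$ the nearest $\hA$-points lie above the nearest $\hB$-points, and these two facts are incompatible with two equidistant points on the same vertical line $x>0$. So the skeleton of your proposal is right, but the step producing a component separating all of $\hA$ from all of $\hB$, and the two places where the equidistance property itself must be invoked, are genuine gaps.
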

\begin{proof} By Lemma~\ref{oneside}, each component of $E(\hA,\hB)$ is a closed ray which stretches to $-\infty$.
For $\hz\in\hX$, let $\haC_\hz$ be the component of $\hz$ in $\hX$.

Let $\ha\in \hX$ such that $\pi_1(\ha)=\max(\pi_1(\hX))<0$. Without loss of generality, $\ha\in \hA$.
Let $R=\ha O\cup [0,\infty)\times\{0\}$, then $R\setminus \{\ha\}$ is a ray disjoint from $\hX$ which lands on $\ha$.
Note that $\complex\setminus [R\cup \haC_\ha]=W\cup V$, where $W$ and $V$ are disjoint, connected, open
and non-empty sets. Without loss of generality, $1+2\pi i\in W$. Then every component of $\hX\cap W$ is above
$\haC_\ha\subset \hA$. Hence $\hB\subset V$. Since $\hB\ne\0$, there exists $\hb\in \hB$ such that $\pi_1(\hb)=\pi_1(\ha)$.
By compactness of $X\cap S(O,e^{\pi_1(\ha)})$, we may assume that
\begin{gather*}\pi_2(\ha)=\min(\pi_2(\hA\cap \pi_1^{-1}(\pi_1(\ha))))  \text{ and}\\
\pi_2(\hb)=\max(\pi_2(\hB\cap\pi_1^{-1}(\pi_1(\ha)))).
\end{gather*}
Then $0<\pi_2(\ha)-\pi_2(\hb)\le 2\pi $ and we may assume that $0<\pi_2(\ha)\le\pi$. For
$z\in [\pi_1(\ha),\infty)\times [\pi_2(\ha),\infty)$,
$d(z,\hA)<d(z,\hB)$ and for $z\in [\pi_1(\ha),\infty)\times (-\infty,\pi_2(\hb)]$, $d(z,\hB)<d(z,\hA)$.
By unicoherence there exists a component $E$ of $E(\hA,\hB)$ which separates $\ha$ and $\hb$. Then
$E$ separates $\haC_\ha\cup [\pi_1(\ha),\infty)\times\pi_2(\ha)$ from $\haC_b\cup [\pi_1(\ha),\infty)\times \pi_2(\hb)$.
So $\pi_1(E)=(-\infty,\infty)$ and
  $E(\hA,\hB)\cap [\pi_1(\ha),\infty)\times \real]\subset [\pi_1(\ha),\infty)\times [\pi_2(\hb),\pi_2(\ha)]$.
  In particular, $E(\hA,\hB)\cap \pi_1^{-1}(x)$ is compact for $x>0$.

  It remains to be shown that $E(\hA,\hB)$ is connected. We prove first that $E$ separates $\hA$ and $\hB$.
  Suppose that $\complex\setminus E=W'\cup V'$, where $W'$ and $V'$ are disjoint, non-empty, open and connected sets,
  and $1+2\pi i\in W'$. Just suppose there exist $\hy\in \hA\cap V'$. Since neither of the disjoint closed sets
  $E$ nor $\hB$ separates $\hy$ from $1-2\pi i$, neither does their union. Let $D\subset \complex\setminus[E\cup B]$
  be an arc
  from $\hy$ to $1-2\pi i\in V'$.  Choose $e\in E$ such that if $r=d(e,\hX)$, then $\pi_1(e)+r<\min\{\pi_1(D)\}$.
  Let $\hw\in S(e,r)\cap \hB$. Then $\haC_\hw\cup \hw e\cup E'$, where $E'$ is the component of $E\setminus\{e\}$
  which projects under $\pi_1$ over $[\pi_1(y),\infty)$, does not separate $\hy$ from $1-2\pi i$.
  It now follows easily that $\haC_\hy$ lies below $\haC_\hw$, a contradiction.

  Hence, we can conclude that $\hA\subset W'$, $\hB\subset V'$ and $E(\hA,\hB)\cap [W'\cup V']=\0$.
  It follows that for all $z\in W'$, $d(z,\hA)<d(z,\hB)$ and for all $z\in V'$, $d(z,\hB)<d(z,\hA)$, and $E(\hA,\hB)=E$.

  Suppose that $e_1\ne e_2\in E$ and  $d(e_i,\hA\cup \hB)=r_i$. Then for all
  $\hz_i\in S(e_i,r_i)\cap [\hA\cup \hB]$, $[e_1\hz_1\setminus \{\hz_1\}]\cap [e_2\hz_2\setminus \{\hz_2\}]=\0$.
Since $\hA$ lies above $\hB$,
   for $e\in E$ with $\pi_1(e)>0$ and $r=d(e,\hA\cup \hB)$, for all $\hz\in \hA\cap S(e,r)$ and $\hw\in \hB\cap S(e,r)$,
   $\pi_2(\hz)>\pi_2(\hw)$. It now follows easily that for such $e$, $\pi^{-1}_1(\pi_1(e))\cap E(\hA,\hB)=\{e\}$.

\end{proof}

 \section{Characterizing accessibility}\label{Schar}
In this section we provide a characterization of accessibility for points in $X$ and show that
accessibility is preserved under the
isotopy $h$. In this section we will always assume that $O\in \bd(X)$ is fixed under the isotopy $h$.
Easy examples (e.g., a half ray spiralling around the closed interval $[-1,1]\times \{0\}$) show that
accessibility of $O$ in $X$ is not equivalent to $\hX$ being not connected.
Nevertheless the spirit of this idea is correct:

\begin{lem} \label{char} Suppose that $O\in \bd(X)$. Then the following are equivalent:
\begin{enumerate}
\item $O$ is accessible,
\item  \label{charsep}$\hX=\hA\cup \hB$, where $\hA$ and $\hB$ are non-empty, disjoint and closed
such that:
\[\text{$\hA$ lies above  $\hB$}\]  and
for all $x\in \real$ there exists $y_1<y_2$
in $\real$ such that
\begin{gather*}
\pi_1^{-1}([x,\infty))\cap \pi_2^{-1}([y_2,\infty))\cap \hB=\0 \text{ and } \tag{2a}\label{2a}\\
\pi_1^{-1}([x,\infty))\cap \pi_2^{-1}((-\infty,y_1])\cap \hA=\0 \tag{2b}\label{2b}.
\end{gather*}
\end{enumerate}
\end{lem}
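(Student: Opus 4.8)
The plan is to prove the two implications separately, using the geometric picture of $\widehat{X} = \exp^{-1}(X\setminus\{O\})$ and the crucial fact established in Section~\ref{Sprelim} that components of $\widehat{X}$ have $x$-projection of the form $(-\infty, m]$ with $m<0$, together with the ``lies above'' ordering.

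For the direction $(1)\Rightarrow(2)$, suppose $O$ is accessible in $X$; then there is a ray $R\subset \complex\setminus X$ landing on $O$ (equivalently a continuum $Y\subset \overline{U}$ with $Y\cap X=\{O\}$, which contains such a ray). Since $R$ avoids $X$, its lift $\widehat{R}$ (a countable disjoint union of lifts, or rather the full preimage $\exp^{-1}(R)$) is disjoint from $\widehat{X}$, and because $R$ lands on $O$ — which corresponds to the ``left end'' $\pi_1\to-\infty$ together with approach staying close to some argument — each lift $\widehat{R}_k$ is a ray in $\complex\setminus\widehat{X}$ whose closure adds no point in $\complex$ (it escapes to $\pi_1=-\infty$), and consecutive lifts are $2\pi i$-translates of each other. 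I would fix one lift $\widehat{R}_0$ and let $\widehat{R}_1 = \widehat{R}_0 + 2\pi i$; these two disjoint rays, together with the controlled behavior as $\pi_1\to -\infty$, cut the strip $\pi_1^{-1}((-\infty,0])$ so that $\widehat{X}$ lands entirely in the open region between $\widehat{R}_0$ and $\widehat{R}_1$ (no component can straddle a lift, since a component is connected and disjoint from $\exp^{-1}(R)$, and the lifts are $2\pi i$-periodic). Now set $\widehat{A}$ to be the union of all components of $\widehat{X}$ lying above $\widehat{R}_0$ within that region and $\widehat{B}$ the rest; more carefully, one takes the components trapped between $\widehat{R}_0$ and $\widehat{R}_1$, and since this region is an ``annular strip'' of vertical width $2\pi$, the ``lies above'' order (Definition~\ref{dorder}, Lemma following it) is a total order on components, and any partition into an upper set $\widehat{A}$ and lower set $\widehat{B}$ with both nonempty works — but one must arrange nonemptiness and the decay conditions \eqref{2a}, \eqref{2b}. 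The decay conditions are exactly the statement that, near the landing direction, $\widehat{X}$ is squeezed between the two lifts $\widehat{R}_0$ and $\widehat{R}_1$: for $\pi_1$ large (close to $0$), $\widehat{X}$ lies in a bounded $\pi_2$-window, and as we also have $\pi_1(\widehat{\haC})=(-\infty,m_{\haC}]$, only finitely many components meet any half-plane $\pi_1^{-1}([x,\infty))$; choosing $y_2$ above all of them for $\widehat{B}$ and $y_1$ below all of them for $\widehat{A}$ gives \eqref{2a},\eqref{2b}. The point to be careful about is that $\widehat{A}$ and $\widehat{B}$ must each be nonempty and closed — closedness follows because ``lies above $\widehat{R}_0$'' is detected by which complementary component one lies in, which is a clopen condition on $\widehat{X}$; nonemptiness is arranged by translating so $\widehat{R}_0$ passes through the interior of the window.

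For the direction $(2)\Rightarrow(1)$, suppose $\widehat{X}=\widehat{A}\cup\widehat{B}$ with $\widehat{A}$ above $\widehat{B}$ and the decay conditions. By Theorem~\ref{connectedE}, $E(\widehat{A},\widehat{B})$ is a closed ray $E$ with $\pi_1(E)=(-\infty,\infty)$ and exactly one point over each $x>0$. I would push this equidistant ray down to the plane: consider $\exp(E\cap \pi_1^{-1}((-\infty,N)))$ for suitable truncation, or rather note that $E$ is disjoint from $\widehat{X}$, so $\exp(E)$ is a curve in $\complex\setminus X$ — it need not be embedded, but the relevant sub-ray is. The key is to show that the ``end'' of $E$ as $\pi_1\to-\infty$ descends under $\exp$ to a ray landing on $O$: since $\exp(\pi_1^{-1}((-\infty,x)))= B(O,e^x)\setminus\{O\}$, pushing $E$ forward by $\exp$ gives a set accumulating only on $O$ as $\pi_1\to-\infty$, and the decay conditions \eqref{2a},\eqref{2b} are what force $E$ to stay in a bounded $\pi_2$-window as $\pi_1\to-\infty$ (from the proof of Theorem~\ref{connectedE}, $E\cap\pi_1^{-1}([x,\infty))$ lies in a horizontal strip of width governed by the $y_i$'s), so that $\exp$ restricted to that tail of $E$ is injective and its image is a genuine ray in $\complex\setminus X$ whose closure meets $X$ only at $O$. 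Hence $O$ is accessible.

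The main obstacle I expect is the bookkeeping in $(1)\Rightarrow(2)$: extracting from a single landing ray $R$ the right partition $\widehat{X}=\widehat{A}\cup\widehat{B}$ with \emph{both} pieces closed, nonempty, correctly ordered, \emph{and} satisfying the two one-sided decay conditions simultaneously. One has to choose the lift $\widehat{R}_0$ and possibly perturb it (staying in $\complex\setminus\widehat{X}$) so that it genuinely separates some components above from some below — a priori all of $\widehat{X}$ might lie on one side of a naive choice — and then verify that ``lying above $\widehat{R}_0$'' is both an open and a closed condition on $\widehat{X}$ so the two pieces are closed. The decay conditions themselves are comparatively routine once one knows each right half-strip meets only finitely many components of $\widehat{X}$, which is immediate from $\pi_1(\widehat{\haC})=(-\infty,m_{\haC}]$ and compactness of $X$.
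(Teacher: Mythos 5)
Your overall outline coincides with the paper's (lift a ray landing on $O$ to get the decomposition for $(1)\Rightarrow(2)$; use the equidistant ray $E(\hA,\hB)$ of Theorem~\ref{connectedE} and push it down by $\exp$ for $(2)\Rightarrow(1)$), but two of your load-bearing claims are false. In $(1)\Rightarrow(2)$ you assert that $\hX$ lies entirely in the open region between two consecutive lifts $\widehat{R}_0$ and $\widehat{R}_1=\widehat{R}_0+2\pi i$. This contradicts the $2\pi i$-periodicity of $\hX$: the preimage of $X\setminus\{O\}$ has components in every complementary strip of $\exp^{-1}(R)$ (e.g.\ for $X=[-1/2,0]$ and $R$ the positive real axis the components are horizontal half-lines at all heights $(2k+1)\pi$, $k\in\Z$). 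For the same reason your finiteness claim, that only finitely many components of $\hX$ meet a right half-plane $\pi_1^{-1}([x,\infty))$, is false (all $2\pi i$-translates of a component meeting it also meet it), so your derivation of (2a),(2b) collapses; and the nonemptiness issue you flag is not fixed by translating or perturbing $\widehat{R}_0$. The correct move is to use a single lift $\hJ$: $\complex\setminus\hJ=U\cup V$, set $\hA=\hX\cap U$, $\hB=\hX\cap V$; closedness is automatic since $\hJ\cap\hX=\0$, and nonemptiness is automatic from periodicity (far-up translates of any component lie in $U$, far-down ones in $V$, because $\hJ$ meets each vertical line in a compact set). Conditions (2a),(2b) then come not from finiteness of components but from the fact that $\hJ$ has bounded $\pi_2$ over each right half-plane, and this requires choosing $R$ to go to infinity in a controlled (essentially radial) way -- the paper takes the conformal external ray; your $R$ is an arbitrary ray landing on $O$, for which this control at the far end is simply not available.

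In $(2)\Rightarrow(1)$ the idea is right, but your assertion that (2a),(2b) force $E$ into a bounded $\pi_2$-window as $\pi_1\to-\infty$ is false (if the access to $O$ spirals, every curve in the complement landing on $O$, and likewise $E$, has $\pi_2$ unbounded as $\pi_1\to-\infty$), and it is also not what is needed; consequently the claimed injectivity of $\exp$ on a tail of $E$ is neither guaranteed nor required. What is needed is that $E\cap\pi_1^{-1}(C)$ is compact for every compact interval $C$ in the $x$-axis: every point of such a vertical strip lies within a uniformly bounded distance of $\hX$ (again by periodicity), so points $e_i\in E$ in the strip with $\pi_2(e_i)\to+\infty$ produce points $\hb_i\in\hB$ with $\pi_1(\hb_i)$ bounded below and $\pi_2(\hb_i)\to+\infty$, contradicting (2a). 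Your appeal to the proof of Theorem~\ref{connectedE} controls $E$ only over $\pi_1\ge\max\pi_1(\hX)$, not over the strips where $\exp(E)$ could accumulate on $X\setminus\{O\}$. With the compactness statement in hand one should not look for an embedded ray at all: take the closed connected set $\ol{\exp(E)}$, check it meets $X$ only at $O$, and invoke the criterion that accessibility follows from a continuum in $\ol{U}$ meeting $X$ in the single point $O$.
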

\begin{proof}
Suppose first that $O$ is accessible, let $R$ be a conformal external ray landing on $O$
and let $\hJ$ be a component of $\exp^{-1}(R)$. Then $\hJ$ is a closed ray in $\complex\setminus \hX$ such that
$\pi_1(\hJ)=(-\infty,\infty)$ and for every vertical line $\ell$,  $\hJ\cap \ell$ is compact.
Note that  $\complex\setminus \hJ=U \cup V$, where $U$ and $V$ are disjoint open and connected sets.
We may assume that for some vertical line $\ell$,
 $\pi_2(\ell\cap U)$ has no upper bound and, since $\hX$ is invariant under vertical translations by $2\pi$,
 that $\{1+2\pi i\}\subset U$. Put $\hA=\hX\cap U$ and $\hB=\hX\cap V$
then condition (\ref{charsep}) holds. The fact every component of $\hA$ lies above
every component of $\hB$ follows from the fact
 that $U$ and $V$ are open and connected, and $U$ is \lq\lq above\rq\rq $V$. To see that (\ref{2a}) and (\ref{2b}) hold
  note that close to infinity $R$ behaves like a radial line segment
in the plane and, hence, $\hJ$ behaves like a horizontal line segment near $+\infty$
so $\pi_2(\hJ\cap \pi_1^{-1}([a,\infty)))$ is bounded for each $a\in\real$.

Suppose next that conditions~(\ref{charsep}), (2a) and (2b) hold. By Theorem~\ref{connectedE},
$E=E(\hA,\hB)$ is a closed ray which runs from $-\infty$ to $\infty$ and separates $\hA$ from $\hB$.

{\bf Claim}: For every  compact arc $C\subset x$-axis,  $\pi_1^{-1}(C)\cap E$ is compact.

Proof of Claim. Let $C\subset x$-axis be a compact interval.
Suppose without loss of generality that  $\pi_1^{-1}(C)\cap E$ contains points $e_i$
with $\lim \pi_2(e_i)=+\infty$. Note that there exists $K>0$ such that for each $z\in \pi_1^{-1}(C)$,
 $d(z,\hX)\le K$.
Hence for each $i$ there exists $\hb_i\in \hB$ such that $d(e_i,\hb_i)\le K$ and $\lim \pi_2(\hb_i)=+\infty$.
This contradicts (\ref{2a}) and completes the proof of the claim.

Now let $MR=\ol{\exp(E)}$. Then $MR$ is a closed and connected set  in $\complex$ and it suffices to show that
$MR\cap X=\{O\}$. Since $\pi_1(E)=(-\infty,\infty)$, $O\in MR$. Suppose that $x\in X\setminus \{O\}$
 is also a limit point of
$MR$. Choose $z_i\in \exp(E)$  such that $\lim z_i=x$ and $\hz_i\in E$ such that $\exp(\hz_i)=z_i$.
Then $d(\hz_i,\exp^{-1}(x))\to 0$. Since $E$ is closed and disjoint from $\hX$,
the sequence $\hz_i$ cannot be convergent and so $\lim |\pi_2(\hz_i)|=\infty$.
By choosing a compact arc $C$ in $R$ which contains $\pi_1(\exp^{-1}(x))$ in its interior, we see that
$E\cap \pi_1^{-1}(C)$ is not compact. This contradiction completes the proof.
\end{proof}

The characterization Lemma~\ref{char} allows us to
 show that an accessible point remains accessible throughout the isotopy.

\begin{thm}\label{ptpreserved}If $x$ is  accessible from $U=\complex\setminus X$, then
$h^t(x)$ is  accessible from $U^t$, where $U^t$ is the unbounded component of
 $\complex\setminus h^t(\bd(X))$, for each $t\in[0,1]$.
\end{thm}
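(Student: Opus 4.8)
\textbf{Proof proposal for Theorem~\ref{ptpreserved}.}

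The plan is to reduce everything to the characterization in Lemma~\ref{char} together with the stability results of Section~\ref{Sprelim}. By the standing conventions we may identify $x$ with the origin $O$, assume $h^t(O)=O$ for all $t$, and assume $X^t\subset B(O,1)$ for all $t$. Thus it suffices to show: if $O$ is accessible in $X=X^0$ (from the unbounded complementary domain), then $O$ is accessible in $X^t$ for every $t\in[0,1]$.

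First I would apply Lemma~\ref{char} to $X^0$ to obtain a decomposition $\hX^0=\hA^0\cup\hB^0$ into non-empty disjoint closed sets with $\hA^0$ lying above $\hB^0$ and satisfying the horizontal-boundedness conditions (2a) and (2b). Using the lifted isotopy $\hh^t$ of Lemma~\ref{lifth}, set $\hA^t=\hh^t(\hA^0)$ and $\hB^t=\hh^t(\hB^0)$; since $\hh^t$ is an isotopy of $\bd(\hX)$ lifting $h^t$, these are disjoint, closed, non-empty, and $\hX^t=\hA^t\cup\hB^t$. By Lemma~\ref{order} (applied componentwise, exactly as in Lemma~\ref{non-interlaced}), $\hA^t$ lies above $\hB^t$ for each $t$. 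So condition (\ref{charsep}) of Lemma~\ref{char} persists for free; the whole problem is to recover conditions (\ref{2a}) and (\ref{2b}) for $\hX^t$, i.e.\ to control the sets near $+\infty$ in the horizontal direction.

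Here is where I expect the main obstacle to lie, and where Lemma~\ref{left} is the essential tool. Conditions (\ref{2a}) and (\ref{2b}) concern points of $\hB^t$ (resp.\ $\hA^t$) whose first coordinate $\pi_1$ is large; a point with large $\pi_1$ lies on a small circle $S(O,e^{\pi_1})$ around $O$ in the base, so (\ref{2a}), (\ref{2b}) are genuinely statements about the small-scale structure of $X^t$ near the accessible point $O$. Fix $t$ and fix $x\in\real$; I must produce $y_1<y_2$ with $\pi_1^{-1}([x,\infty))\cap\pi_2^{-1}([y_2,\infty))\cap\hB^t=\emptyset$ and the symmetric statement for $\hA^t$. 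Apply Lemma~\ref{left} with a suitable $\e$: there is $\da\in\real$ so that any point of $\bd(\hX)$ whose $t$-image has first coordinate $\ge\e$ has first coordinate $>\da$ at \emph{every} time, in particular at time $0$. Equivalently, every point $\hp\in\bd(\hX^t)$ with $\pi_1(\hp)\ge\e$ is $\hh^t(\hq)$ for some $\hq\in\bd(\hX^0)$ with $\pi_1(\hq)>\da$; choosing $\e$ to be (say) $x$, this pins down a \emph{compact} vertical strip $\pi_1^{-1}([\da,0])\cap\bd(\hX^0)$ (compact because $\hX^0\subset\pi_1^{-1}((-\infty,0])$ and $\hX^0$ meets each vertical line in a compact set, being $\exp^{-1}$ of a compact subcircle) that contains all the relevant preimages. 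Since $\hh^t$ is an isotopy, $\hh^t$ restricted to this compact strip is uniformly continuous in $(\hq,t)$, so the image $\hh^{[0,1]}$ of the strip is bounded; this bounds $|\pi_2|$ on the portion of $\hX^t$ lying in $\pi_1^{-1}([x,\infty))$, uniformly, which is exactly (\ref{2a}) and (\ref{2b}). Because the decomposition $\hA^t\cup\hB^t$ is compatible under $\hh^t$ with $\hA^0\cup\hB^0$ and the latter satisfied (\ref{2a}), (\ref{2b}) at time $0$, the bound transfers to the correct pieces $\hB^t$ and $\hA^t$ respectively, giving the required $y_1,y_2$.

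Having verified all hypotheses of Lemma~\ref{char}(\ref{charsep})–(\ref{2b}) for $\hX^t$, the lemma yields that $O$ is accessible in $X^t$ from its unbounded complementary domain, which is $U^t$. Since the accessible point $O$ was an arbitrary accessible point $x$ of $X$ after normalization, the theorem follows. The one point demanding care in the write-up is the bookkeeping in the previous paragraph: one should fix the time $t$ and the threshold $x$ first, then extract $\da$ from Lemma~\ref{left}, then invoke uniform continuity of $\hh$ on the compact strip $\pi_1^{-1}([\da,0])\cap\bd(\hX^0)$; the quantifier order ``for all $x$ there exist $y_1<y_2$'' in Lemma~\ref{char} is exactly what makes this legitimate, since $y_1,y_2$ are allowed to depend on $x$ (and on $t$).
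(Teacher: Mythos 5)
Your overall route is the paper's: normalize $x=O$, decompose $\hX^0=\hA^0\cup\hB^0$ via Lemma~\ref{char}, lift the isotopy (Lemma~\ref{lifth}), preserve the order relation (Lemma~\ref{order}), and recover conditions (\ref{2a})--(\ref{2b}) using Lemma~\ref{left}. However, the step in which you actually recover (\ref{2a})--(\ref{2b}) contains a genuine error. The set $\pi_1^{-1}([\da,0])\cap\bd(\hX^0)$ is \emph{not} compact: $\hX^0=\exp^{-1}(X^0\setminus\{O\})$ is invariant under the deck transformation $\hz\mapsto\hz+2\pi i$, so the preimage of a compact subset of $X^0\setminus\{O\}$ (in particular of a ``subcircle'') is a vertically periodic, vertically unbounded set. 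Consequently your conclusion that the image of this strip under the lifted isotopy is bounded, and hence that $|\pi_2|$ is uniformly bounded on $\hX^t\cap\pi_1^{-1}([x,\infty))$, cannot hold: $\hX^t\cap\pi_1^{-1}([x,\infty))$ is itself $2\pi i$-periodic, so $\pi_2$ is unbounded on it already at $t=0$. Note also that (\ref{2a})--(\ref{2b}) are not a two-sided bound on $\hX^t$ near $+\infty$; they are one-sided bounds on the separate pieces ($\hB^t$ does not reach high, $\hA^t$ does not reach low in the right half-plane), which is exactly what keeps the channel to $O$ open, so a symmetric ``$|\pi_2|$ bounded'' statement is both false and not what is needed.

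The repair is the paper's argument. Fix $x$ and use Lemma~\ref{left} to obtain $x'$ such that every point of $\hX^0$ with $\pi_1\le x'$ has $\pi_1$-image less than $x$ for all $t$; hence any point of $\hB^t$ with $\pi_1\ge x$ equals $\hh^t(\hz)$ for some $\hz\in\hB^0$ with $\pi_1(\hz)>x'$, and (\ref{2a}) at $t=0$ applied with threshold $x'$ gives $\pi_2(\hz)<y_2$. The missing ingredient is an upper bound for $\pi_2(\hh^t(\hz))$ over this noncompact set of points $\hz$, and it comes not from compactness but from equivariance: by uniqueness of path lifts, $\hh^t(\hz+2\pi i)=\hh^t(\hz)+2\pi i$, so the displacement $\hh^t(\hz)-\hz$ is continuous in $(\hz,t)$ and $2\pi i$-periodic in $\hz$, hence bounded on $\bd(\hX^0)\cap\pi_1^{-1}([x',\infty))$ uniformly in $t$ (its quotient by the translation is compact). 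This yields $y_3$ with $\hB^t\cap\pi_1^{-1}([x,\infty))\subset\pi_2^{-1}((-\infty,y_3))$, i.e.\ (\ref{2a}) for all $t$; condition (\ref{2b}) is symmetric. With that step corrected, the remainder of your argument (Lemmas~\ref{char}, \ref{lifth} and \ref{order}, then Lemma~\ref{char} again to conclude accessibility of $O$ in $X^t$) goes through exactly as in the paper.
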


\begin{proof} Suppose that $x^0$ is an accessible point of $X^0$. We may assume that $x^0=O$,
   $h^t(O)=O$ and $X^t\subset B(O,1)$ for all $t$.

  By Lemma~\ref{char} $\hX^0=\hA^0\cup \hB^0$ such that  conditions  (\ref{charsep}), (2a)
  and (2b) of
   Lemma~\ref{char} hold.
  By Lemma~\ref{lifth} we can lift the isotopy $h^t$ to an isotopy $\hh^t:\bd(\hX^0)\to\complex$ such that
  $\hh^0=id_{\bd(\hX^0)}$. By Lemma~\ref{order},  $\hA^t$ lies above  $\hB^t$ for all $t$.
  It remains to be shown that conditions (\ref{2a}) and (\ref{2b}) are satisfied for all $t$.
  By symmetry it suffices to show that (\ref{2a}) holds.
    Suppose $x\in \real$. By Lemma~\ref{left} there exists $x'\in \real$
  such that for all $t$, $\max(\pi_1(\hh^t(\pi_1^{-1}({(-\infty,x'])}\cap \hX^0))<x$.
  By~(\ref{2a})  for $t=0$, there exists $y_2$ such that $\pi_1^{-1}({[x',\infty))}\cap \pi_2^{-1}([y_2,\infty))
  \cap B^0=\0$. Choose $y_3$
  such that for all $t$, $\max(\pi_2(\hh^t(\pi_1^{-1}([x',\infty))\cap \pi_2^{-1}((-\infty, y_2]))\cap \hX)))<y_3$.
  Then $\pi_1^{-1}([x,\infty))\cap \pi_2^{-1}([y_3,\infty))\cap B^t=\0$ and (\ref{2a}) holds for all $t$.
  Hence by Lemma~\ref{char}, $O$ is accessible for all $t$.
\end{proof}

\section{Continuity of external angles}  Given a non-separating continuum $X$, an isotopy
$h^t:\bd(X)\to \complex$ such that $h^0=id_{\bd(X)}$, let $U^t$ be the unbounded component of
$\complex\setminus h^t(\bd(X))$.
We construct an isotopy $\al^t:S^1\to S^1$ such that if the conformal ray $R_\ta\subset U^0$ lands on $x$, then
$R_{\al^t(\ta)}\subset U^t$ lands on $x^t$ in $X^t$ for each
$t$.  This is accomplished in two steps. We first construct for each $t$ a continuously (in the sense of
Hausdorff metric) varying arc $L^t$ landing on $x^t$. This arc
 is contained in the image under the exponential map of the
 equidistant set  in Section~\ref{Sprelim}.

 \begin{lem}\label{metricarc} Let $O$ be an accessible point of $X$.
 Then there exists for each $t$ an arc $L^t$ such that $X^t\cap L^t=\{O^t\}$
  is an endpoint of $L^t$ and the function $\ba:[0,1]\to C(\complex)$ defined by $\ba(t)=L^t$ is a continuous function
  to the space $C(\complex)$ of compact subsets of $\complex$ with the Hausdorff metric.
  \end{lem}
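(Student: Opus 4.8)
The plan is to build the arcs $L^t$ as images under $\exp$ of truncated equidistant sets, using the machinery of Section~\ref{Sprelim} at every time $t$ simultaneously. By Theorem~\ref{ptpreserved}, $O$ remains accessible in $X^t$ for every $t$, so by Lemma~\ref{char} we have a decomposition $\hX^0=\hA^0\cup\hB^0$ satisfying (\ref{charsep}), (2a), (2b); by Lemma~\ref{order} and the argument in Theorem~\ref{ptpreserved} the lifted decomposition $\hX^t=\hA^t\cup\hB^t$ satisfies these same conditions for all $t$. Hence Theorem~\ref{connectedE} applies for each $t$, giving a closed ray $E^t=E(\hA^t,\hB^t)$ with $\pi_1(E^t)=(-\infty,\infty)$ and $|\pi_1^{-1}(x)\cap E^t|=1$ for $x>0$. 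As in the proof of Lemma~\ref{char}, $MR^t=\ol{\exp(E^t)}$ meets $X^t$ only in $O$. First I would fix a large constant $a>0$ (independent of $t$, possible since $X^t\subset B(O,1)$ for all $t$) and set $L^t$ to be the subarc of $MR^t$ running from $O$ out to the unique point of $E^t$ over $\pi_1=a$, i.e. $L^t=\ol{\exp(E^t\cap\pi_1^{-1}((-\infty,a]))}$; the uniqueness of the fibre over each $x>0$ guarantees $L^t$ is genuinely an arc with endpoint $O^t=O$, and $X^t\cap L^t=\{O\}$.

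The substance is continuity of $t\mapsto L^t$ in the Hausdorff metric. I would prove this by showing that the family of pairs $(\hA^t,\hB^t)$ varies "continuously" in a sense strong enough to force Hausdorff continuity of the truncated equidistant sets. Concretely: (i) the lifted isotopy $\hh^t$ is (uniformly) continuous on $\bd(\hX)$ by Lemma~\ref{lifth}, so on each vertical strip $\pi_1^{-1}([b,a])$ the sets $\hA^t$ and $\hB^t$ move continuously in Hausdorff metric, and by Lemma~\ref{left} the "tails" $\pi_1^{-1}((-\infty,b])$ contribute nothing to distances measured at points with $\pi_1$-coordinate near $a$, once $b$ is chosen sufficiently negative. (ii) For a fixed point $z$ with $\pi_1(z)$ bounded, the function $(t,z)\mapsto d(z,\hA^t)-d(z,\hB^t)$ is then jointly continuous; its zero set on the compact strip $\pi_1^{-1}([0,a])\cap\pi_2^{-1}([\text{bounded}])$ is exactly $E^t$ restricted there (here I use the a priori vertical bound $E^t\cap\pi_1^{-1}([0,a])\subset\pi_1^{-1}([0,a])\times[\pi_2(\hb),\pi_2(\ha)]$ from Theorem~\ref{connectedE}, which can be made uniform in $t$ by another application of Lemma~\ref{left}). (iii) Because $E^t$ is a $1$-manifold meeting each vertical line $\pi_1^{-1}(x)$, $x\in(0,a]$, in exactly one point, it is the graph of a continuous function $x\mapsto g^t(x)$ on $(0,a]$; I claim $g^t(x)\to g^{t_0}(x)$ uniformly as $t\to t_0$. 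Uniform convergence follows from (ii) together with a transversality-type observation drawn from Lemma~\ref{oneside}: near a point $e\in E^{t_0}$ with radius $r=d(e,\hX^{t_0})$, the two sets $\hA^{t_0},\hB^{t_0}$ lie in disjoint arcs $J_\hA,J_\hB$ of $S(e,r)$ separated by $E^{t_0}$, so the sign of $d(z,\hA^t)-d(z,\hB^t)$ is strictly negative just "below" $e$ and strictly positive just "above" it, uniformly for $t$ near $t_0$; this pins the zero set within an arbitrarily thin horizontal band. Hausdorff continuity of $\exp(E^t\cap\pi_1^{-1}((0,a]))$, and then of its closure $L^t$, is immediate from uniform convergence of the graphs together with uniform continuity of $\exp$ on the bounded region $\pi_1^{-1}((-\infty,a])$ intersected with the relevant vertical band.

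The main obstacle is step (iii): turning the pointwise/one-sided sign information into \emph{uniform} convergence of the graphs $g^t$, i.e. ruling out that $E^t$ develops a near-vertical "spike" that escapes Hausdorff-closeness even though the defining difference function converges. I would handle this by exploiting that $E^t$ is globally a graph over $(0,a]$ (no vertical segments, by the uniqueness clause of Theorem~\ref{connectedE}) combined with the uniform vertical a priori bound; then a compactness argument on $x\in[\e,a]$ plus separate control near $x=0$ (where $\exp$ collapses the relevant strip towards $O$, so Hausdorff-smallness is automatic once $\pi_1<b'$ for $b'$ very negative, again via Lemma~\ref{left}) closes the gap. A minor secondary point to check is that the truncation level $a$ and the strip parameter $b$ can indeed be chosen once and for all independently of $t$; this is exactly what Lemma~\ref{left} and the uniform bound $X^t\subset B(O,1)$ provide.
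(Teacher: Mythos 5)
Your construction itself (the equidistant set in the exponential cover, truncated at a fixed vertical line, pushed down by $\exp$ and closed up) is the paper's construction, and reducing to a decomposition $\hX^t=\hA^t\cup\hB^t$ that keeps satisfying the conditions of Lemma~\ref{char} is fine. The genuine gap is in your continuity argument, step (iii). Your mechanism is uniform convergence of the graphs $g^t$, but Theorem~\ref{connectedE} gives the one-point-per-vertical-line property only for $\pi_1>0$, i.e.\ only for the portion of $E^t$ lying over the exterior of $B(O,1)$, far from $X^t$. Over $\pi_1\in[b',0]$ the ray $E^t$ is in general not a graph: it may cross a single vertical line many times as it threads between $\hA^t$ and $\hB^t$ towards $-\infty$ (think of a spiralling continuum), and under $\exp$ this is exactly the part of $L^t$ inside the closed unit disk, i.e.\ the part that winds close to $X^t$. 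Your fallback, ``$\exp$ collapses the strip towards $O$,'' only takes care of $\pi_1\le b'$ with $b'$ very negative; on $[b',0]$ the map $\exp$ is bi-Lipschitz onto the annulus $e^{b'}\le|z|\le 1$ and collapses nothing. So the region where continuity is most delicate is covered by neither half of your argument. A secondary but real issue: you invoke the uniform vertical a~priori bound only on $\pi_1\in[0,a]$; to prevent vertical spikes of $E^{t_i}$ over $[b',0]$ (whose $\exp$-images would wrap around intermediate circles and destroy upper semicontinuity of $L^{t_i}$) you need it on all of $[b',a]$. It does hold there, via the uniform-in-$t$ versions of (2a)--(2b) established in the proof of Theorem~\ref{ptpreserved} together with the fact that $d(z,\hX^t)$ is bounded on vertical strips uniformly in $t$, but this must be said.

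The fix is to abandon the graph mechanism over the inner region and argue by connectedness, which is what the paper does. With $M^t=E^t\cap\pi_1^{-1}((-\infty,a])$ connected, and with the easy upper semicontinuity statement (if $e_i\in E(\hA^{t_i},\hB^{t_i})$ and $e_i\to e$, then $e\in E(\hA^{t_0},\hB^{t_0})$, which follows from convergence of $\hA^{t_i},\hB^{t_i}$ on vertical strips plus the tail control of Lemma~\ref{left}), any Hausdorff limit of a subsequence of $L^{t_i}$ is a subcontinuum of $L^{t_0}$ containing $O$ and containing the limit of the unique points of $E^{t_i}$ over $\pi_1=a$; a subcontinuum of an arc containing both endpoints is the whole arc, so $L^{t_i}\to L^{t_0}$. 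No graph structure over $[b',0]$ is needed. (A cosmetic difference: instead of the abstract decomposition of Lemma~\ref{char}, the paper straightens an accessing arc onto the positive $x$-axis and splits $\hX$ by the sign of $\pi_2$, which places $E(\hA^0,\hB^0)$ inside the band $\pi_2^{-1}([-2\pi,2\pi])$ at $t=0$; your abstract decomposition can be made to work, but then all vertical control must come from the uniform (2a)--(2b) estimates.)
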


\begin{proof}
We assume as usual that  $h^t(O)=O$ and $X^t\subset B(O,1)$  for all
$t$. Since every half ray in the plane is tame, we may assume that
the positive $x$-axis is contained in $\complex\setminus X^0$. Then
$\hX\subset \complex\setminus \pi_2^{-1}(\{0\})$. Let $\hA^0=\hX\cap
\pi_2^{-1}((0,\infty))$ and $\hB^0=\hX\cap \pi_2^{-1}((-\infty,0))$. Then $\hX$ is the
union of these two disjoint closed sets and $\hA^0$ lies above $\hB^0$. Since
$\hX$ is invariant under vertical translation by $2\pi$, it follows
that $E(\hA^0,\hB^0)$ is contained in $\pi_2^{-1}([-2\pi,2\pi])$. By
Lemma~\ref{order}, $\hA^t$ lies above $\hB^t$ for each $t$. By
Theorem~\ref{connectedE}, $E(\hA^t,\hB^t)$ is a ray which separates
$\hA^t$ and $\hB^t$ in $\complex$ and
$\pi_1(E(\hA^t,\hB^t))=(-\infty,\infty)$.

Let $t_i\to t_0\in[0,1]$. Then $\hA^{t_i}\to \hA^{t_0}$ and
$\hB^{t_i}\to \hB^{t_0}$ on compact sets (i.e., $K$ compact in $\hA^0$ implies
$K^{t_i}\to K^{t_0}$). It is easy to check that if $e_i\in
E(\hA^{t_i},\hB^{t_i})$ and $e_i\to e$, then $e\in E(\hA^{t_0},\hB^{t_0})$.

By Theorem~\ref{compactV}  $|E(\hA^t,\hB^t)\cap \pi_1^{-1}(1)|=1$.
For each $t$, let $M^t=E(\hA^t,\hB^t)\cap\pi_1^{-1}((-\infty,1])$. Then
$M^t$ is connected and $\lim M^{t_i}=M^{t_0}$. Hence $L^t=\ol{\exp(M^t)}$ is the required arc.
\end{proof}

By a \emph{crosscut} $C$ of a non-separating continuum $X\subset \complex$
we mean an open arc $C\subset\complex\setminus X$ whose closure is a closed arc with
 distinct endpoints $a$ and $b$  which are
in $X$. In this case we say that the crosscut $C$ \emph{joins} the points $a$ and $b$ of $X$. By
the \emph{shadow} of $C$, denoted by $Sh(C)$,
 we mean the closure of the bounded complementary domain of $\complex\setminus [X\cup C]$.

\begin{thm}\label{contarc}
Suppose that $O\in \bd(X)$, $h^t:\bd(X)\to\complex$ is an isotopy such that $h^0=id_{\bd(X)}$, $h^t(O)=O$
and $\diam (X^t)<1$ for all $t$. Let $U^t$ be the component of $\sphere\setminus h^t(\bd(X))$
containing $\infty$, let $\vp^t:\disk\to U^t$ be the normalized
Riemann map
 and let $L^t$ be an arc with one endpoint at $O$ such that $L^t$ varies continuously with $t$ in the Hausdorff metric
 and $L^t\cap X^t=\{O\}$ for all $t$. Then the function $\al:[0,1]\to S^1$ defined by
 $\al(t)=S^1\cap\ol{(\vp^t)^{-1}(L^t)}$ is continuous and the external ray $\vp^t(\{r e^{2\pi i \al(t)}
 \mid r<1\})=R^t_{\al(t)}$ lands on $O$ in $X^t$ for each $t$.
\end{thm}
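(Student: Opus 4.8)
The plan is to prove the two assertions of Theorem~\ref{contarc} separately: first that each $R^t_{\al(t)}$ lands on $O$, then that $\al$ is continuous in $t$. For the landing claim, fix $t$ and consider the arc $L^t\subset \ol{U^t}$ with $L^t\cap X^t=\{O\}$. Since $L^t$ is a continuum in $\ol{U^t}$ meeting $X^t$ exactly at $O$, the point $O$ is accessible from $U^t$, and by the classical accessibility fact quoted in the introduction (see \cite{miln00}), $\ol{(\vp^t)^{-1}(L^t\setminus\{O\})}\cap S^1$ is a single point, namely $\al(t)$, and the conformal external ray $R^t_{\al(t)}$ lands on $O$ in $X^t$. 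This handles the landing statement for every $t$ and simultaneously shows $\al(t)$ is well-defined as a single angle.

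For continuity, I would argue by contradiction. Suppose $t_i\to t_0$ but $\al(t_i)\not\to\al(t_0)$; passing to a subsequence we may assume $\al(t_i)\to\beta\ne\al(t_0)$ in $S^1$. The first ingredient is that $\vp^{t_i}\to\vp^{t_0}$ uniformly on compact subsets of $\disk$ by Carath\'eodory kernel convergence (as recalled in the introduction), and correspondingly $(\vp^{t_i})^{-1}\to(\vp^{t_0})^{-1}$ uniformly on compact subsets of $U^{t_0}$. The second ingredient is the hypothesis $L^{t_i}\to L^{t_0}$ in the Hausdorff metric, together with $L^{t_i}\cap X^{t_i}=\{O\}$. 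The idea is that pulling back $L^{t_i}$ under $\vp^{t_i}$ gives an arc in $\disk$ from (a point near $O$'s preimage, i.e.\ near the origin) out to the boundary point $e^{2\pi i\al(t_i)}$; the convergence of the maps and of the arcs should force the boundary endpoints to converge to $e^{2\pi i\al(t_0)}$, contradicting $\beta\ne\al(t_0)$.

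The delicate point — and what I expect to be the main obstacle — is exactly the one flagged in the introduction: Carath\'eodory kernel convergence controls $\vp^{t_i}$ only on compact subsets of $\disk$ and says nothing directly about boundary behavior, so one cannot simply ``take boundary values'' of the convergence. To get around this I would work with an initial sub-arc of $L^{t_i}$ that stays a definite distance from $X^{t_i}$, or rather use the metric/equidistant structure: recall that by Lemma~\ref{metricarc} the arcs $L^t$ arise as $\ol{\exp(M^t)}$ where $M^t\subset E(\hA^t,\hB^t)$, and these equidistant sets vary continuously (on compact sets) and have the separation property of Theorem~\ref{connectedE}. Concretely, for a fixed small $r<1$ the compact sub-arc $(\vp^{t_0})^{-1}(L^{t_0})\cap\{|z|\le r\}$ lies in a compact subset of $\disk$, and by uniform convergence of $(\vp^{t_i})^{-1}$ there the pulled-back sub-arcs converge; one then needs to propagate this from the fixed compact part out to the boundary. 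The separation property — $L^{t}$ separates the (images of the) sets $\hA^t$ and $\hB^t$ near $O$ — pins down which prime end of $U^{t_i}$ the arc $L^{t_i}$ converges to, and an argument using that the accessible point $O$ is not a cut point (so its preimage is a single angle, by the remark in the introduction) together with continuity of the complementary structure should force $e^{2\pi i\al(t_i)}\to e^{2\pi i\al(t_0)}$.

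Alternatively, and perhaps more cleanly, I would phrase the contradiction geometrically: if $\al(t_i)\to\beta\ne\al(t_0)$, then for large $i$ the external rays $R^{t_i}_{\al(t_i)}$ and the ``limit'' ray $R^{t_0}_{\al(t_0)}$ are separated near $S^1$, hence (by kernel convergence applied to a compact sub-disk) their initial segments $\vp^{t_i}(\{|z|\le r, z = \rho e^{2\pi i\al(t_i)}\})$ are eventually bounded away from the initial segment of $L^{t_0}$'s lift image; but $L^{t_i}\to L^{t_0}$ forces these initial segments to be close, since $L^{t_i}$ is pulled back to an arc ending at $e^{2\pi i\al(t_i)}$. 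Making ``eventually bounded away'' precise is where the Kulkarni–Pinkall / equidistant machinery and the non-cut-point hypothesis do the real work; modulo that, the contradiction closes the proof.
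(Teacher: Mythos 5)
Your treatment of the landing statement and of the well-definedness of $\al(t)$ (via the classical accessibility fact in \cite{miln00}) is exactly what the paper does. For continuity, however, your proposal stops precisely where the paper's proof has to begin. You correctly diagnose that Carath\'eodory kernel convergence only controls $(\vp^{t})^{-1}$ uniformly on compact subsets of the domain, but the step you describe as ``propagate this from the fixed compact part out to the boundary'' / ``should force $e^{2\pi i\al(t_i)}\to e^{2\pi i\al(t_0)}$'' is the entire mathematical content, and nothing in your outline supplies it. Hausdorff convergence $L^{t_i}\to L^{t_0}$ together with $L^{t_i}\cap X^{t_i}=\{O\}$ does not by itself prevent $L^{t_i}$ from reaching $O$ through a different ``channel'' of $U^{t_i}$, i.e.\ at an angle far from $\al(t_0)$; ruling that out requires a quantitative boundary estimate for the maps $(\vp^t)^{-1}$ that is uniform in $t$. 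The paper gets it by a crosscut construction: fix $\e$, choose a crosscut $C(t_0)$ of $X^{t_0}$ whose pullback under $\vp^{t_0}$ is short and whose shadow catches $L^{t_0}$; for $t$ near $t_0$ build a crosscut $C(t)$ of $X^{t}$ from a fixed compact subarc $K\subset C(t_0)$ (which stays in $\complex\setminus X^t$, so $(\vp^t)^{-1}|_K\to(\vp^{t_0})^{-1}|_K$ uniformly) joined to $X^t$ by two arcs of circles $S(a(t_0),r)$, $S(b(t_0),s)$ with $\rho\le r,s\le\sqrt{\rho}$, where the radii are chosen using the length--area estimate \cite[Prop.~2.2]{pomm92} so that $\diam\,(\vp^t)^{-1}(J(z,t))\le 2\pi/\sqrt{\ln(1/\rho)}$ for all relevant $t$. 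Since $L^{t}$ lands in the shadow of $C(t)$ and the endpoints of $(\vp^t)^{-1}(C(t))$ are within $3\da_1$ of those of $(\vp^{t_0})^{-1}(C(t_0))$, one gets $|\al(t)-\al(t_0)|<\e$. Some estimate of this kind (length--area, extremal length, or modulus) is indispensable; without it your contradiction argument does not close, so this is a genuine gap rather than a routine detail.

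Two further points in your sketch are off target. First, you invoke ``the accessible point $O$ is not a cut point (so its preimage is a single angle)''; Theorem~\ref{contarc} makes no such hypothesis, and in general many external rays may land at $O$ --- the angle $\al(t)$ is singled out by the arc $L^t$ itself, and it is exactly the crosscut/shadow argument that identifies the correct prime end, so an appeal to uniqueness of the angle is both unavailable and unnecessary. Second, neither the equidistant-set structure of Lemma~\ref{metricarc} nor the Kulkarni--Pinkall lamination helps here: the theorem is stated for an arbitrary continuously varying arc $L^t$ (specializing to $L^t=\ol{\exp(M^t)}$ would narrow the statement without removing the need for the boundary estimate), and the KP lamination enters the paper only later, in the final extension over complementary domains.
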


We shall refer to the function $\al:[0,1]\to S^1$ in Theorem~\ref{contarc}
as the \emph{continuous angle function}.

\begin{proof} By \cite{miln00},  $\al$ as defined in the statement of the Lemma is a function.
It remains to be shown that $\al$ is
continuous. We will first present an outline of the proof.
Fix $\e>0$. Let $a(t_0)$ and $b(t_0)$
be endpoints of a crosscut $C(t_0)$ of $X^{t_0}$ in $B(O,1/2)$ such that $L^{t_0}$ lands in the shadow
of the crosscut $C(t_0)$ and $\diam((\vp^{t_0})^{-1}(C(t_0)))<2\e/3$. Choose $\ba<1/5\ \min(d(a(t_0),b(t_0)),
d(L^{t_0},\{a^{t_0},b^{t_0}\}), d(O, C(t_0)))$.
We shall choose $K$,  a large compact subarc of $C(t_0)$, such that $B(L^{t_0},\ba)\cap C(t_0)\subset K$
and
such that $L^t\subset B(L^{t_0},\ba)$ and  $K\cap X^t=\0$  whenever $t$ is close to $t_0$.
We shall define $K^t$, a crosscut of $X^t$, which contains a large sub-arc of $K$
 together with two small arcs $J(a,t)$ and $J(b,t)$
 which join points close to the endpoints of $K$ to $X^t$ such  that
$(\vp^t)^{-1}(K^t)$  is a small crosscut of $\disk$ whose shadow  contains $\al(t_0)$
and $\al(t)$ for $t$ sufficiently close to $t_0$.

Let  $C(t_0)$, $a(t_0)$, $b(t_0)$ and $\beta$ be defined as above
and let $\hat{a}(0)$ and $\hat{b}(0)$ be the endpoints of $(\vp^{t_0})^{-1}(C(t_0))$. We may assume that
$\al(t_0)$ is contained in the interval $(\hat{a}(0),\hat{b}(0))$ in the boundary circle $S^1$
contained in the shadow of $(\vp^{t_0})^{-1}(C(t_0))$. Then
$|\hat{b}(0)-\hat{a}(0)|<2\e/3$.
Choose $\da_1>0$ such that
$\da_1<(1/5) \min(|\hat{b}(0)-\al(t_0)|,|\al(t_0)-\hat{a}(0)|,\e/4,\ba)$.
Let $\rho<\sqrt{\rho}<\min\{\da_1,1\}$ such that $\frac{2\pi}{\sqrt{\ln(1/\rho)}}<\da_1$ and
\begin{gather}
(\vp^{t_0})^{-1}(B(a(t_0),2\sqrt{\rho})\cap C(t_0))\subset B(\hat{a}^{t_0},\da_1),\label{1r}\\
(\vp^{t_0})^{-1}(B(b(t_0),2\sqrt{\rho})\cap C(t_0))\subset B(\hat{b}^{t_0},\da_1)\label{2r},
\end{gather}
and there is just one   component $K$ of $C(t_0)\setminus [B(a(t_0),\rho/2)\cup B(b(t_0),\rho/2)]$
which meets  both $S(a(t_0),\rho/2)$ and $S(b(t_0),\rho/2)$.

Next choose $\da_2>0$ such that for all $|t-t_0|<\da_2$:
\begin{enumerate}
\item $L^t\subset B(L^{t_0},\rho/2)$,
\item $d(h^t,h^{t_0})<\rho/2$,
\item $X^t\cap K=\0$ and
\item \label{4r}$d((\vp^t)^{-1}|_K, (\vp^{t_0})^{-1}|_K)<\da_1$.
\end{enumerate}

By \cite[Prop. 2.2]{pomm92} there exist $\rho\le r,s\le \sqrt{\rho}$ such that if
$J(a,t)$ is a component of $S(a(t_0),r)\setminus X^t$ which meets $K$ and
and $J(b,t)$ is a component of $S(b(t_0),s)\setminus X^t$ which meets $K$, then
\begin{equation}\label{3r}
\diam(\vp^t)^{-1}(J(z,t))\le \frac{2\pi}{\sqrt{\ln(1/\rho)}}<\da_1
\text{ for } z\in\{a,b\}. \end{equation}

 Then $K\cup J(a,t)\cup J(b,t)$
contains a crosscut $C(t)$ of $X^{t}$ and  $L^{t}$ lands in the shadow of $C(t)$.
Since the endpoints of $C(t)$ are joined by a subarc of $J(z,t)\subset B(z,\sqrt{\rho})$ to $K$
it follows from (\ref{1r}), (\ref{2r}), (\ref{3r}) and (\ref{4r}) that
 the endpoints of $(\vp^t)^{-1}(C(t))$ are within $3\da_1<(3/20)\e$ of the endpoints
 $\hat{a}^{t_0}$ and $\hat{b}^{t_0}$ of $(\vp^{t_0})^{-1}(C(t_0))$.
 Then for $|t-t_0|<\da_2$, $\al(t_0)$ is in the shadow of $(\vp^{t})^{-1}(C(t))$,  $\vp^t(L^t)$ lands in this shadow
 and the distance between the endpoints of $(\vp^{t})^{-1}(C(t))$ is less than $6\da_1+2\e/3<
 (6/20+2/3)\e<\e$ as desired.
\end{proof}

\begin{thm} Suppose $h^t$ is an isotopy of the boundary of a non-separating continuum $X\subset \complex$
 such that $h^0=id_{\bd(X)}$. Let $U^t$ be the  component of $\sphere\setminus h^t(\bd(X))$ containing $\infty$
 and let $\vp^t:\disk\to U^t$ denote the normalized Riemann map. Then there exists
an isotopy $\al^t:S^1\to S^1$ such that $\al^0=id_{S^1}$ and if $R^0_\ta$ lands on $x^0\in X^0$, then
$R^t_{\al^t(\ta)}$ lands on $x^t$ for each $t$.
\end{thm}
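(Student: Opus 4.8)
The plan is to assemble the isotopy $\al^t$ from the pieces already constructed: for a point $x^0\in X^0$ that is \emph{not} a cut point, Theorem~\ref{ptpreserved} guarantees $x^t$ stays accessible, Lemma~\ref{metricarc} produces a Hausdorff-continuous family of metric arcs $L^t$ landing on $x^t$, and Theorem~\ref{contarc} converts this into a continuous angle function $t\mapsto\al(t)$ with $R^t_{\al(t)}$ landing on $x^t$. So for each non-cut accessible point the correspondence $\ta\mapsto\al^t(\ta)$ is forced and continuous in $t$. The first step is therefore to define $\al^t$ on the (dense, since $X$ is a non-separating continuum) set of angles whose rays land on non-cut points, using Theorem~\ref{contarc}, and to record that $\al^0=\mathrm{id}_{S^1}$ because $\ba(0)=L^0$ and $\vp^0$ are the given data. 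One must check this is well-defined: two distinct such angles cannot map to the same angle at time $t$, since their rays land on the distinct points $x^t\ne y^t$ (isotopies are injective), and the landing point determines the angle for non-cut points.

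The second step is to show the partially-defined map $\al^t$ is monotone (order-preserving) on its domain in $S^1$ and uniformly continuous in $\ta$, uniformly in $t$. Monotonicity comes from the fact that the rays $R^t_\ta$ are pairwise disjoint and their union with the arcs $L^t$ cannot cross; equivalently, the cyclic order of landing points on $\bd U^t$ is preserved because $\vp^t$ is a homeomorphism onto $U^t$ and the isotopy does not reverse orientation. For equicontinuity I would use the crosscut estimates inside the proof of Theorem~\ref{contarc}: a crosscut $C(t)$ of small conformal diameter at time $t_0$ persists (as $K^t$) for nearby $t$ with conformal diameter still controlled, and its shadow traps $\al^t(\ta)$ for all $\ta$ whose rays land in the shadow of $C(t)$ — this gives a modulus of continuity for $\al^t$ independent of $t$ on compact parameter intervals.

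The third step is to extend $\al^t$ from the dense set of non-cut angles to all of $S^1$ by uniform continuity, obtaining for each $t$ a continuous, monotone, degree-one (since $\al^0=\mathrm{id}$ and $\al^t$ varies continuously, the degree cannot jump) surjection $\al^t:S^1\to S^1$; monotone + degree one forces $\al^t$ to be a homeomorphism. Joint continuity of $(t,\ta)\mapsto\al^t(\ta)$ follows from continuity in $\ta$ uniformly in $t$ together with continuity in $t$ for each fixed non-cut $\ta$, by a standard $\e/2$ argument. Finally one verifies the landing statement for \emph{cut points}: if $R^0_\ta$ lands on a cut point $x^0$, take non-cut points $x_n^0$ accessible from $U^0$ with landing angles $\ta_n\to\ta$; then $\al^t(\ta_n)\to\al^t(\ta)$, the arcs $R^t_{\al^t(\ta_n)}$ land on $x_n^t\to x^t$, and a kernel-type/prime-end argument shows $R^t_{\al^t(\ta)}$ lands on $x^t$.

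The main obstacle I expect is the equicontinuity in $\ta$ uniform in $t$ (step two): one needs that crosscuts of $X^0$ with small harmonic measure can be chosen so that their persistence under the isotopy — already handled pointwise in $t_0$ in Theorem~\ref{contarc} — is uniform as $\ta$ ranges over all angles, which requires a compactness argument on $S^1\times[0,1]$ and care that the constants $\ba,\rho,\da_1,\da_2$ from the proof of Theorem~\ref{contarc} can be taken locally uniform. Everything else is a routine assembly of already-established facts.
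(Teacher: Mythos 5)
Your overall strategy (apply Theorem~\ref{contarc} angle by angle, observe that the resulting correspondence is circular order preserving on a dense set of angles, and extend by density) is the same as the paper's, but the way you choose the dense set creates a genuine gap. You define $\al^t$ only at angles whose rays land at \emph{non-cut} points and claim this set is dense in $S^1$; that is false in general. If $X$ is an arc, every point except the two endpoints is a cut point, yet almost every angle lands, so the set of angles landing at non-cut points can consist of just two angles. The paper instead takes $\mathcal{A}$ to be the set of \emph{all} landing angles (dense by Fatou/density of accessible points) and applies Theorem~\ref{contarc} to each $\ta\in\mathcal{A}$: Lemma~\ref{metricarc} needs only accessibility of the landing point, and the arc $L^0$ can be chosen inside the access determined by $\ta$ (as in Lemma~\ref{extendray}), so $\al_\ta(0)=\ta$ even when several rays land at the same point; injectivity of $\al^t$ then comes from order preservation rather than from distinctness of landing points. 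The non-cut hypothesis in the introduction is only used for uniqueness of the angle at a given point, not for the construction.

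The second, related problem is your treatment of cut-point angles by approximation. Choosing non-cut landing angles $\ta_n\to\ta$ does not guarantee that the landing points $x_n^0$ converge to $x^0$: the ray $R^0_\ta$ landing at $x^0$ only controls the principal point, and the impression of the prime end $\ta$ may be nondegenerate, in which case nearby landing points accumulate elsewhere on the impression. Even granting $x_n^t\to x^t$ and $\al^t(\ta_n)\to\al^t(\ta)$, a limit of landing rays need not land at all, and if it lands it need not land at the limit of the landing points; so the appeal to a \lq\lq kernel-type/prime-end argument\rq\rq{} is not a proof, and it is precisely the content that Theorem~\ref{contarc} supplies when applied to $\ta$ directly. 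Once you define $\al^t$ on all of $\mathcal{A}$ via Theorem~\ref{contarc}, the cut-point case disappears. On the positive side, your attention to monotonicity, equicontinuity and joint continuity in the extension step addresses a point the paper treats very tersely (\lq\lq since $\mathcal{A}$ is dense, $\al^t$ extends\rq\rq ); a clean route there is to note that $\al^t$ is circular order preserving with dense image (run the isotopy backwards from time $t$), so its monotone extension is a homeomorphism, and joint continuity follows from continuity in $t$ on the dense set together with monotonicity.
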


\begin{proof}Suppose that $R^0_\ta$ lands on $x^0\in X^0$. By Theorem~\ref{contarc}, there exists a continuous function
$\al_\ta:[0,1]\to S^1$ such that $\al_\ta(0)=\ta$ and $R^t_{\al_\ta(t)}$ lands on $x^t$ for each $t$.
Let $\mathcal{A}$ be the set of angles in $S^1$ such that for each $\ta\in\mathcal{A}$, $R^0_\ta$
lands on a point $x(\ta)\in X^0$. Define $\al^t:\mathcal{A}\to S^1$ by
$\al^t(\ta)=\al_{\ta}(t)$. Then $\al^t$ is a circular order preserving isotopy
of $\mathcal{A}$ such that $\al^0=id_{\mathcal{A}}$.
 Since $\mathcal{A}$ is dense in $S^1$, $\al^t$ can be extended to an isotopy of $S^1$.
\end{proof}

We will refer to the isotopy $\al^t$ as the \emph{continuous angle isotopy}.

\section{Extension over hyperbolic crosscuts} Suppose $U$ is a component of $\complex\setminus Z$
and $h^t:Z\to\complex$ is an isotopy
 such that $h^0=id_{Z}$. Then there exists a path $\ga:[0,1]\to\complex$ such that
 $\ga(0)\in U$ and $\ga(t)\in\complex\setminus Z^t$ for all $t$.
 Then we denote by $U^t$ the component of $\complex\setminus Z^t$ which contains the point
 $\ga(t)$.  Note that $U^t$ is independent of the choice of the path $\ga$.
 We have shown in
the previous section that if there exists a  crosscut
in $U^0$ joining the points $a^0$ and $b^0$, then for each $t$ there exists a
crosscut in $U^t$ joining the points
$a^t$ and $b^t$. We will show next that we can choose for each $t$ a natural crosscut $C^t$
joining these points such that
the isotopy $h$ can be extended over $X^0\cup C^0$. For this purpose we will use
hyperbolic geodesics defined by  the Poincar\'e metric on $\disk$.

Suppose that $a^0$ and $b^0$ are the landing points of the external rays $R^0_{\ta(a)}$ and $R^0_{\ta(b)}$
in $\complex\setminus X^0$. By Theorem~\ref{contarc}, there exist continuous angle functions $\al:[0,1]\to S^1$ and
$\ba:[0,1]\to S^1$ such that for each $t$, $R^t_{\al(t)}$ and $R^t_{\ba(t)}$ land on $a^t$ and $b^t$, respectively.
Let $G^t$ be the hyperbolic geodesic joining the points $\al(t)$ and $\ba(t)$ in $\disk$ (i.e., $G^t$
is the intersection of the round circle through the points $\al(t)$ and $\ba(t)$ with $\disk$
which crosses $S^1$ perpendicularly at
both of these points). Let $C^t=\vp^t(G^t)$. We will call $C^t$ the \emph{hyperbolic crosscut of $X^t$ joining the
points $a^t$ and $b^t$}.
In the final part of this section we will consider  $Z$ as a subset of the sphere and show
 that the isotopy $h:X^0\times [0,1]\to\sphere$ can be extended to an isotopy $H:X^0\cup C^0\times [0,1]\to\sphere$
such that $H^t(C^0)=C^t$, where
$C^t$ is the hyperbolic crosscut of $X^t$ joining $a^t$ to $b^t$.
 We will make
use of the following  well-known Theorem
\cite[Theorem 4.20]{pomm92}\footnote{We are indebted to Paul Fabel for this reference.}.
\begin{thm}[Gehring-Hayman Theorem]\label{GHthm}
There exists a universal constant $K$ such that
for for any conformal map $\vp:\disk\to\complex$,
if $z_1,z_2\in\ol{\disk}$, $\ga$ is an arc in $\disk$ from $z_1$ to $z_2$,
and $S$ is the hyperbolic geodesic from $z_1$ to $z_2$, then $ \diam(\vp(S))\le K\, \diam(\vp(\ga))$.
\end{thm}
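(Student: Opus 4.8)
The plan is to recall the classical proof; since the statement is standard, in the paper we simply invoke it as \cite[Theorem 4.20]{pomm92}. Throughout write $\Omega=\vp(\disk)$ --- a proper subdomain of $\complex$, so that $\partial\Omega\ne\emptyset$ --- and $d(w,\partial\Omega)$ for the Euclidean distance from $w\in\Omega$ to $\partial\Omega$. The one analytic ingredient is the two-sided distortion estimate
\[
\frac14(1-|z|^2)|\vp'(z)|\le d(\vp(z),\partial\Omega)\le (1-|z|^2)|\vp'(z)|,
\]
valid for every conformal map $\vp:\disk\to\Omega$: the left inequality is the Koebe one-quarter theorem, and the right one follows from an elementary application of the Schwarz lemma. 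Consequently there is an absolute constant $C_0$ such that $d(\vp(z),\partial\Omega)\le C_0\,d(\vp(w),\partial\Omega)$ and $|\vp(z)-\vp(w)|\le C_0\,d(\vp(z),\partial\Omega)$ whenever $z,w\in\disk$ lie at hyperbolic distance at most $1$. First I would dispose of the trivial case $\diam(\vp(\ga))=\infty$ and reduce to the case of interior endpoints $z_1,z_2\in\disk$, approximating boundary endpoints from inside $\disk$ along both $S$ and $\ga$ and passing to the limit (the diameter of an increasing union of sets being the supremum of the diameters).

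Next I would lay down the geometric skeleton. Let $\zeta_0\in S$ be the point of $S$ closest to the origin and split $S=S_1\cup S_2$ at $\zeta_0$; since $S_1\cap S_2=\{\zeta_0\}$ we have $\diam(\vp(S))\le\diam(\vp(S_1))+\diam(\vp(S_2))$, so it suffices to bound $\diam(\vp(S_2))$. Parametrise $S_2$ from $\zeta_0$ to $z_2$ by hyperbolic arclength and mark points $\zeta_0,\zeta_1,\dots$ at consecutive hyperbolic distance $1$. Every point of the subarc of $S_2$ between $\zeta_j$ and $\zeta_{j+1}$ is at hyperbolic distance at most $1$ from $\zeta_j$, hence its image lies in $B(\vp(\zeta_j),C_0\,d(\vp(\zeta_j),\partial\Omega))$; so $\vp(S_2)$ is covered by a chain of consecutively overlapping Euclidean disks, one of radius $\asymp d(\vp(\zeta_j),\partial\Omega)$ about each $\vp(\zeta_j)$, and the geodesic question becomes: how far along this chain must one travel?

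The heart of the matter is the comparison with the arbitrary competing arc $\ga$. For each $j$ let $\ba_j$ be the complete hyperbolic geodesic meeting $S$ orthogonally at $\zeta_j$; since $\zeta_j$ lies on the geodesic joining $z_1$ and $z_2$, $\ba_j$ separates $z_1$ from $z_2$ in $\disk$, so $\ga$ meets $\ba_j$ and $\vp(\ga)$ meets the crosscut $\vp(\ba_j)$ of $\Omega$. The portion of $\vp(\ba_j)$ within bounded hyperbolic distance of $\zeta_j$ is a connected set through $\vp(\zeta_j)$ of diameter $\asymp d(\vp(\zeta_j),\partial\Omega)$ which, together with the Koebe disks it meets, locally separates $\Omega$; using this one shows that $\vp(\ga)$, forced to cross suitable blocks $\ba_j\cup\dots\cup\ba_{j+m}$ of these perpendicular geodesics, must either pass within $O(d(\vp(\zeta_j),\partial\Omega))$ of $\vp(\zeta_j)$ or exhibit nearby two of its own points at distance comparable to the relevant scale --- either way producing a lower bound for $\diam(\vp(\ga))$ at that scale. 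The remaining step --- converting this whole family of local lower bounds for $\diam(\vp(\ga))$ into the single inequality $\diam(\vp(S))\le K\,\diam(\vp(\ga))$ --- is carried out by a greedy, stopping-time argument marching along $S$, and is the technical core; with it in hand, adding the contributions of $S_1$ and $S_2$ finishes the proof. (The same comparison can be run in the language of extremal length, or of harmonic measure of the two sides of $S$, which is closer to Gehring and Hayman's original argument.)

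I expect the stopping-time step to be the only genuine obstacle. The termwise bound $\diam(\vp(S_2))\le\sum_j C_0\,d(\vp(\zeta_j),\partial\Omega)$ is worthless --- that sum may diverge even when $\diam(\vp(S_2))$ is finite --- so one really has to exploit that the images of the perpendicular geodesics are separating sets and organise the summation so that only the relevant scales, namely those a competitor is actually forced to realise, enter the final estimate. By comparison the distortion input and the reduction to interior endpoints are routine. As the result is classical, in the paper we simply quote \cite[Theorem 4.20]{pomm92}.
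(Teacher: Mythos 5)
Your proposal ends exactly where the paper does: this is a classical result which the paper does not prove but simply quotes as \cite[Theorem 4.20]{pomm92}, and your decision to invoke that reference is precisely the paper's approach. The accompanying sketch of the standard argument (Koebe distortion, the chain of disks along the geodesic, the separating perpendicular geodesics and the stopping-time summation) is a fair outline, and its admitted incompleteness at the stopping-time step is immaterial since the theorem is cited rather than reproved.
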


Recall that $X$ is a non-separating plane continuum.
Each  angle $\ta\in S^1$ corresponds to a \emph{prime end} of $\complex^*\setminus X$. By a fundamental chain
$C_j$ of crosscuts we mean a sequence of crosscuts of $X$ such that $\lim\diam(C_j)=0$, $C_i\subset
Sh(C_j)$
for $i>j$ and the arcs $\{\ol{C_i}\}$ are all pairwise disjoint. A naturally defined
equivalence class of fundamental chains is called a prime end of $\complex^*\setminus X$
(see \cite{miln00} for further details).

\begin{lem}\label{extendray}Let $h$ be an isotopy of $\bd(X)$, $O\in \bd(X)$ and $h^t(O)=O$ for all $t$.
Suppose that $R^0_\ta$ is a conformal external ray of $X^0$ landing on $O$.
 Then the isotopy
$h$ can be extended to an isotopy $H:[X\cup R^0_\ta]\times[0,1]\to\complex$ such that $H^t(R^0_\ta)$ is an external ray
of $X^t$ landing on $O$.
\end{lem}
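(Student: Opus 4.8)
The idea is to build the extension $H^t$ on $R^0_\ta$ by transporting $R^0_\ta$ along a continuously varying family of crosscuts which exhaust it, and to use the metric arc $L^t$ of Lemma~\ref{metricarc} (equivalently its conformal shadow) to pin down a continuous angle function so that the destination rays $R^t_{\al(t)}$ all land on $O$. Concretely, choose a sequence of points $p_1,p_2,\dots$ on $R^0_\ta$ converging to $O$, together with a fundamental chain of hyperbolic crosscuts $C_j^0$ of $X^0$ whose shadows $Sh(C_j^0)$ decrease to $O$ and with $p_j$ on $C_j^0$. By Theorem~\ref{contarc} and the construction of hyperbolic crosscuts in Section~5, each $C_j^0$ determines a hyperbolic crosscut $C_j^t=\vp^t(G_j^t)$ of $X^t$ whose endpoint angles vary continuously in $t$; since the two defining rays of $C_j^0$ both land (they are the rays that, together with subarcs of $\ol{R^0_\ta}$, bound $Sh(C_j^0)$), the angle function of $C_j^t$ is continuous. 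The Gehring--Hayman Theorem (Theorem~\ref{GHthm}) is what controls the geometry: since $C_j^t$ is a hyperbolic geodesic joining two angles whose conformal images are close (the images of short boundary subarcs near $O^t$), $\diam(C_j^t)\to 0$ uniformly in $t$ on $[0,1]$, so the $C_j^t$ form a genuine fundamental chain for each $t$ and their shadows shrink to a single point, which must be $O=O^t$.

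Next I would define $H^t$ on $R^0_\ta$. Parametrize $R^0_\ta$ so that $p_j$ has parameter $1-1/j$ and $O$ has parameter $1$; on each crosscut-arc-piece of $R^0_\ta$ between $C_j^0$ and $C_{j+1}^0$ choose any homeomorphism onto the corresponding piece of $R^t_{\al(t)}$ between $C_j^t$ and $C_{j+1}^t$, depending continuously on $t$ and reducing to the identity at $t=0$ (this is an isotopy extension on an arc inside the closed region $Sh(C_j^0)\setminus \mathrm{int}\,Sh(C_{j+1}^0)$, and can be done compatibly with $h^t$ on the two endpoints $p_j,p_{j+1}$ which lie on $C_j^t,C_{j+1}^t$). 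Define $H^t(O)=O$. The map $H^t$ so defined is a bijection of $X\cup R^0_\ta$ onto $X^t\cup R^t_{\al(t)}$, agrees with $h^t$ on $X$, and $H^0=\mathrm{id}$.

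The main obstacle — and the step I would spend the most care on — is continuity of $H$ at the landing point $O$ (i.e., joint continuity of $(w,t)\mapsto H^t(w)$ as $w\to O$ along $R^0_\ta$, uniformly in $t$). This is exactly where one needs the uniform (in $t$) shrinking $\diam(C_j^t)\to 0$: given $\e>0$, pick $j$ with $\diam(Sh(C_j^t))<\e$ for all $t$ (possible by the Gehring--Hayman estimate plus uniform continuity of the isotopy and of $t\mapsto\vp^t$ via Carath\'eodory kernel convergence); then for $w\in R^0_\ta$ beyond $p_j$ and all $t$, $H^t(w)\in Sh(C_j^t)\subset B(O,\e)$. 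Continuity away from $O$ is routine since everything is built from finitely many continuously varying arcs there. One should also check that $H^t$ is an open map onto its image (hence a homeomorphism), which follows because on $X$ it is $h^t$, on the open arc $R^0_\ta$ it is a homeomorphism onto an open arc, and the two pieces fit together along the frontier correctly by the shrinking-shadow property. Invariance of domain then gives that $H^t$ is a homeomorphism of $X\cup R^0_\ta$ onto $X^t\cup R^t_{\al(t)}$, completing the proof.
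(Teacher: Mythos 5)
Your skeleton matches the paper's: obtain the continuous angle function $\alpha(t)$ from Theorem~\ref{contarc}, send $R^0_\ta$ to $R^t_{\alpha(t)}$, note that continuity away from the landing point is easy, and recognize that the whole difficulty is equicontinuity at $O$. But your resolution of that difficulty has a genuine gap. You claim that the transported crosscuts satisfy $\diam(C_j^t)\to 0$ uniformly in $t$ (and that their shadows shrink to $\{O\}$), justified by ``the Gehring--Hayman estimate plus uniform continuity of the isotopy and of $t\mapsto\vp^t$ via Carath\'eodory kernel convergence.'' This is circular: Theorem~\ref{GHthm} bounds the image of the hyperbolic geodesic joining two prime ends by the diameter of \emph{some arc in $U^t$} joining the same two points, so to use it you must first exhibit, at time $t$, a short arc in $\complex\setminus X^t$ joining $a_j^t$ to $b_j^t$. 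That two points of $X^t$ which are close in the plane (being $h^t$-images of points near $O$) can be joined by a small arc in the complement at time $t$ is exactly the hard content of the paper, proved only later as Theorem~\ref{smallC} by the odd/even-domain parity argument; it does not follow from Carath\'eodory kernel convergence, which gives uniform control only on compact subsets of $\disk$ and says nothing near the boundary. Moreover, even granting $\diam(C_j^t)\to 0$, small crosscuts need not have small shadows, so ``the shadows shrink to a single point'' needs a separate argument.

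The paper closes this gap differently, and this is the idea your proposal is missing: it uses the \emph{metric} arc $L^t$ of Lemma~\ref{metricarc} not merely to define $\alpha(t)$ but as a geometric access route to $O$ that varies continuously in $t$. Fixing $t_\infty$ and $\varepsilon>0$, one chooses a single small simple closed curve $S$ (built from a crosscut of a fundamental chain at time $t_\infty$) with $\diam(S)<\varepsilon/K$, and shows that for $t$ near $t_\infty$ the first point $z^t$ of $R^t_{\alpha(t)}$ on $S$ can be joined to $O$ inside $[\complex\setminus X^t]\cup\{O\}$ by a subarc of $S$ together with a piece of $L^t$, of diameter $<\varepsilon/K$; only then does Gehring--Hayman give that the tail of $R^t_{\alpha(t)}$ lies in $B(O,\varepsilon)$. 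Note also that the paper defines the extension canonically by $H(z,t)=\vp^t\circ\rho^t\circ(\vp^0)^{-1}(z)$ (a rotation conjugated by the Riemann maps), so continuity away from $O$ is immediate from kernel convergence, whereas your piecewise choice of homeomorphisms between consecutive crosscut segments would need an additional (routine but unaddressed) compatibility argument. Unless you supply the short-arc construction at time $t$ (or prove a statement like Theorem~\ref{smallC} independently), the key uniform smallness claim remains unproven.
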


\begin{proof}
 By Lemma~\ref{contarc}, there exists a continuous angle function $\al:[0,1]\to S^1$
such that $\al(0)=\ta$ and the (conformal) external ray $R^t_{\al(t)}$ lands on $O$ for each $t$.
Extend the isotopy $h$ over $R^0_{\al(0)}$ by
\begin{equation}\label{defH} H(z,t)=\vp^t\circ \rho^t \circ (\vp^0)^{-1}(z)\end{equation}
 for $z\in R^0_{\al(0)}$,
where $\rho^t$ is the rotation of $\disk$ by the angle $\al(t)-\al(0)$. By Carath\'eodory kernel convergence,
$H$ is an isotopy of every compact subset of $R^0_{\al(0)}$. Hence it suffices to show that if $z_i\to O$ in
$R^0_{\al(0)}$ and $t_i\to t_{\infty}$,
then $H(z_i,t_i)\to O=H(O,t_\infty)$.

To see this fix $\e>0$.
It suffices to  show that there exists an open  disk $B$ containing $O$ with simple closed
curve boundary $S$ and $\da>0$ such that
for all $t$ with $|t-t_\infty|<\da$, if
$z^t$ is the first point of $R^t_{\al(t)}$ (from $\infty$) on $S$ and if $CR^t_{z^t}$ is the
component of $R^t_{\al(t)}\setminus z^t$ from $z^t$ to $O$, then $CR^t_{z^t}\subset B(O,\e)$.

Let $K$ be the universal constant from
Theorem~\ref{GHthm}. By Lemma~\ref{metricarc} there exists a continuously varying arc $L^t\subset\complex\setminus X^t$
landing on $O$ in $X^t$ for each $t$ such that $\ol{(\vp^0)^{-1}(L^0)}\cap S^1=\{\ta\}$. Choose a
fundamental chain of crosscuts $C^{t_\infty}_n$ of $X^{t_\infty}$
for the prime-end $\al(t_\infty)$. Then both $L^{t_\infty}$ and $R^{t_\infty}_{\al(t_\infty)}$
cross $C^{t_\infty}_n$ essentially
(that is $X\cup C^{t_\infty}_n$ separates the endpoints of $L^{t\infty}$ and also the ends of
the ray $R^{t_\infty}_{\al(t_\infty)}$).
Hence we can choose $n$ sufficiently large and a simple closed curve $S$ containing $O$
in its bounded complementary domain $B$ such that
 $C^{t_\infty}_n\subset S$, $[L^{t_\infty}\cup R^{t_\infty}_{\al(t_\infty)}]
\cap [S\setminus C^{t_\infty}_n]=\0$ and $\diam(S)<\e/K$. From now on fix this $n$
and let $a$ and $b$ be the endpoints of $C^{t_\infty}_n$.

For $t$ close to $t_\infty$, let $w^{t}$ be the first point (from $O$) of $L^t$ on $S$.
Let $C^t$ be the component of
$S\setminus X^t$ containing the point $w^{t_\infty}$.
Choose $\rho<(1/3) d(\{a,b\},[L^{t_\infty}\cup  R^{t_\infty}_{\al(t_\infty)}])$  and
let $C^t_-$ be the component of
$C^t\setminus [B(a,\rho)\cup B(b,\rho)]$ which contains $w^{t_\infty}$. Choose
$\da>0$ such that if $|t-t_\infty|<\da$, then
\begin{enumerate}
\item $w^{t_\infty}\in \complex\setminus [X^t\cup B(a,\rho)\cup B(b,\rho)]$,
\item \label{disjoint} $C^t_-=C^{t_\infty}_-$,
\item \label{inU} $L^t\subset B(L^{t_\infty},\rho)$,
\item \label{firstz} if $z^t$ is the first point of $R^t_{\al(t)}$ (from infinity) on $S$, then $z^t\in C^t_-$.
\end{enumerate}
The first and second conditions follow from  the continuity of $h$
 and the third from the continuity of $L^t$. The last condition follows from Carath\'eodory kernel convergence:
recall that $d(R^{t_\infty}_{\al(t_\infty)}, S\setminus C^{t_\infty})=\eta>0$. Let
$v\in R^{t_\infty}_{\al(t_\infty)}\cap B$ such that the component of $R^t_{\al^t(\ta)}\setminus \{v\}$
from $v$ to $O$ is contained in $B$ and let $(\vp^{t_\infty})^{-1}(v)=r_0\exp(\al(t_\infty))$.
By Carath\'eodory kernel convergence,
$I^t=\vp^t(\{r\exp(\al(t))\mid 0\le r \le r_0\})$ converges to the  segment from $v$ to $\infty$
in $R^{t_\infty}_{\al(t_\infty)}$.
Hence $d(I^t, S\setminus C^{t_\infty})>\eta/2$ and $d(I^t,v)<(1/2) d(v,S)$ for $t$ close to $t_\infty$, and
(\ref{firstz}) hold for $\da$ sufficiently small.

 By (\ref{disjoint}),
(\ref{inU}) and (\ref{firstz}),
the sub-arc $A^t$ of $C^t$ joining the points $w^t$ and $z^t$, is contained in $\complex\setminus X^t$.
Hence the union of the arcs $A^t$ and $[w^t,O]\subset L^t$ is an arc in $[\complex\setminus X^t]\cup \{O\}$, joining
$z^t$ to $O$, of diameter less than $\e/K$. By Theorem~\ref{GHthm}, the terminal segment
 $CR^t_{z^t}\subset B(O,\e)$ as required.

\end{proof}

In  the remaining part of the paper we will consider $Z$ as a subset of the unit sphere $\sphere\subset \real^3$
 with \emph{spherical metric} $\rho$. Hence
the distance between two points $z,w\in\sphere$ is the length of the shortest arc in the great circle which
is the intersection of $\sphere$
and the plane through $z$, $w$ and the origin in $\real^3$.

Since every hyperbolic crosscut is conformally equivalent to a diameter of $\disk$ it follows that
we can extend the isotopy $h^t$ over any hyperbolic crosscut $C^0\subset U^0$ joining two
points $a^0$ and $b^0$ in $Z^0$ to an isotopy $H:Z^0\cup C^0\to\sphere$
(since in this case the point at infinity is not fixed, the range of the isotopy must be the sphere).
 Note that if $C_i$ is a convergent sequence of hyperbolic
crosscuts whose limit contains a non-degenerate subcontinuum $Y\subset Z$, then this extension
of the isotopy over $\cup C_i$ is not necessarily continuous at $Y$. However,
 we can  extend over a suitable compact set of hyperbolic crosscuts in $U$ as follows.
At this point it will be convenient to change to the Cayley-Klein model\footnote{We are indebted to Nandor Simanyi
 for suggesting the  Cayley-Klein model.} of the hyperbolic disk. There exists a
homeomorphism $g:\ol{\disk}\to\ol{\disk}$, which is the identity on the boundary $S^1$ of $\disk$,
such that $g$ preserves radial line segments and for any two points $\ta_1,\ta_2\in S^1$ the hyperbolic geodesic
$G$ joining $\ta_1$ to $\ta_2$ is mapped to the straight line segment  $\ta_1\ta_2$, which is a chord of the unit
disk with endpoints $\ta_1$ and $\ta_2$.

Suppose that $\Hy$ is a collection of disjoint hyperbolic crosscuts in $U$ such that
the set $\bigcup_{c\in\Hy} \ol{C}$ is compact, then we call $\Hy$ a
\emph{compact set of disjoint hyperbolic crosscuts in $U$}.
The compactness implies that there exists $\e>0$ such that for each
$C\in\Hy$, $\diam(C)\ge \e$. Let $a_C$ and $b_C$ denote the endpoints of $C\in\Hy$, let
$\al_C$ and $\ba_C$ be the corresponding endpoints of $(\vp^0)^{-1}(C)$ and let
$\mathcal{A}^0$ denote the union of all the angles $\{\al_C,\ba_C\}$ for $C\in\Hy$. Let $\al^t$ be the continuous angle
isotopy and let $\mathcal{A}^t=\al^t(\mathcal{A}^0)$. Then for each $t\in[0,1]$, the collection of chords
$\al^t(\al_C)\al^t(\ba_C)$ is a compact lamination in the unit disk in the sense of Thurston \cite{thur85}.
We will denote the  family of all such chords by $\lam^t$. We will say that $\lam^0$ is the \emph{pullback
of the lamination $\Hy$ to the unit disk.} Let  $\mathcal{L}^{t^*}=
\bigcup \mathcal{L}^t$.
Note that any two distinct  chords in $\lam^t$  meet at most in a common endpoint and there exists $\da>0$ such that
for each $t$ and each chord in $\lam^t$, $\diam(\al^t(\al_C)\al^t(\ba_C))>\da$. Let $L^t:\lam^0\to \lam^t$ be the
\emph{linear isotopy on  $\lam$} which extends $\al^t$ and maps each  chord
$\al^0(\al_C)\al^0(\ba_C)$ linearly onto the chord $\al^t(\al_C)\al^t(\ba_C)$.  Then  the following theorem follows.

\begin{thm}\label{contcross} Suppose that $\Hy=\Hy^0$ is a compact set of disjoint hyperbolic crosscuts in $U^0$.
 Then the isotopy $h:Z^0\times [0,1]\to\sphere$
can be extended to an isotopy $H:[Z\cup \Hy^{*}]\times[0,1]\to \sphere$ such that $H^t(\Hy^*)=\Hy^{t^*}
=\vp^t(g^{-1}(\lam^{t^*}))$ and $H$ is defined by:
\[H^t(z)=
\begin{cases} h^t(z),   &\text{if $z \in Z^0$;} \\
\vp^t\circ g^{-1}\circ L^t \circ g\circ (\vp^0)^{-1}(z)  &\text{if $z\in \Hy^*$,}
\end{cases}\]
where $L^t$ is the linear isotopy on the pullback $\lam$ of $\Hy$
\end{thm}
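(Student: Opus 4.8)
The plan is to establish that the formula given for $H^t$ defines a genuine isotopy: that for each fixed $t$ the map $H^t$ is a homeomorphism of $Z^0\cup\Hy^*$ onto $Z^t\cup\Hy^{t^*}$, that $H^0$ is the identity, and — the only substantive point — that $H$ is jointly continuous in $(z,t)$, in particular at points of $Z^0$ which are limits of points of $\Hy^*$. First I would record the easy facts. On $Z^0$ the map is $h^t$, which is an isotopy by hypothesis. On a single chord $\al^0(\al_C)\al^0(\ba_C)$ the map $g^{-1}\circ L^t\circ g$ carries the hyperbolic geodesic $G^0$ joining $\al_C,\ba_C$ onto the hyperbolic geodesic $G^t$ joining $\al^t(\al_C),\al^t(\ba_C)$, because $g$ linearizes geodesics and $L^t$ is linear on chords; composing with $\vp^t$ gives exactly the hyperbolic crosscut $C^t$ joining $a^t$ and $b^t$, so $H^t(\Hy^*)=\Hy^{t^*}$ as claimed. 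Injectivity of $H^t$ follows because distinct chords of $\lam^t$ meet at most in a common endpoint (so their images under $\vp^t\circ g^{-1}$ meet at most in a common endpoint on $S^1$, which maps into $\bd(X^t)$), and on $Z^0$ the endpoint of a chord lands on the correct accessible point $a^t$ or $b^t$ by Theorem~\ref{contarc}; since $h^t$ is injective and the open crosscuts lie in $U^t$ disjoint from $Z^t$, the pieces fit together injectively. That $H^0=\mathrm{id}$ is immediate since $L^0=\mathrm{id}_{\lam^0}$, $g^{-1}\circ g=\mathrm{id}$, and $\vp^0\circ(\vp^0)^{-1}=\mathrm{id}$.

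The main obstacle, and where the real work lies, is joint continuity of $H$ at the limiting continua. On $Z^0\times[0,1]$ continuity is that of $h$; on the union of the \emph{open} crosscuts $\Hy^*$, away from the endpoints, continuity follows from Carath\'eodory kernel convergence of $\vp^t$ together with continuity of $\al^t$ and of the linear isotopy $L^t$. The delicate case is a sequence $z_i\in C_i^0\subset\Hy^*$ (or $z_i\in Z^0$) with $z_i\to z_\infty\in Z^0$ and $t_i\to t_\infty$: I must show $H^{t_i}(z_i)\to H^{t_\infty}(z_\infty)=h^{t_\infty}(z_\infty)\in Z^{t_\infty}$. The idea is to run the Gehring–Hayman argument uniformly over the compact lamination, exactly as in Lemma~\ref{extendray} but applied to the whole family: since $\diam(\vp^t(G))\le K\,\diam(\vp^t(\text{any arc with the same endpoints in }\disk))$, and since by Theorem~\ref{contarc} the endpoint angles of the chord $C_i$ vary continuously and the pair of landing points $(a_{C_i}^{t_i},b_{C_i}^{t_i})$ is forced by injectivity of $h^{t_i}$ to converge — through the compactness of $\bigcup_{C\in\Hy}\ol C$ — to a pair of landing points that must coincide (both equal $z_\infty^{t_\infty}$), one gets that $\diam(\vp^{t_i}(G^{t_i}))\to 0$ along the relevant subsequence and the crosscut $C_i^{t_i}$ shrinks to the single point $h^{t_\infty}(z_\infty)$; hence $H^{t_i}(z_i)$, which lies on $C_i^{t_i}$, converges to that point as well. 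Here I use crucially the uniform lower bound $\diam(C)\ge\e$ forcing the endpoints of $C_i$ to stay apart, so that a convergent crosscut cannot degenerate to an arc in $Z^0$ of positive diameter unless both of its endpoints collapse to the \emph{same} point of $Z^0$ — which is precisely the situation $z_i\to z_\infty$ produces, and which is handled by the Gehring–Hayman shrinking estimate.

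With joint continuity in hand, the remaining steps are formal: $H$ restricted to $Z^0\times[0,1]$ is $h$, so $H$ extends $h$; each $H^t$ is a continuous bijection of the compact set $Z^0\cup\Hy^*$ (compactness of $Z^0\cup\Hy^*$ follows from compactness of $\bigcup_{C\in\Hy}\ol C$ together with that of $Z^0$), hence a homeomorphism onto its image $Z^t\cup\Hy^{t^*}$; and the map $(z,t)\mapsto(H^t(z),t)$ is therefore a homeomorphism of $(Z^0\cup\Hy^*)\times[0,1]$ onto its image, i.e.\ $H$ is an isotopy. Finally the identity $\Hy^{t^*}=\vp^t(g^{-1}(\lam^{t^*}))$ is just the definition of $\lam^t$ and $C^t$ unwound. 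I expect the write-up to spend essentially all of its length on the limiting-continuum continuity estimate; everything else is bookkeeping that invokes Theorem~\ref{contarc}, Theorem~\ref{GHthm}, and Carath\'eodory kernel convergence.
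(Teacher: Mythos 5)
Your reduction of the theorem to its one substantive point is the right one: the formula visibly maps each crosscut onto the corresponding hyperbolic crosscut of the image continuum, $H^0=id$ since $\al^0=id_{S^1}$, injectivity follows from the fact that distinct chords of $\lam^t$ meet at most in a common endpoint, and continuity on $\Hy^*\times[0,1]$ away from $Z^0$ follows from Carath\'eodory kernel convergence; the only real issue is joint continuity of $H$ at points of $Z^0$ approached from $\Hy^*$. (The paper in fact writes no proof at all for this theorem -- it is asserted to follow from the construction of $\al^t$, $g$, $L^t$ together with the method of Lemma~\ref{extendray}.)

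Your treatment of that crucial point, however, has a genuine error. You argue that if $z_i\in C_i^0\subset\Hy^*$ with $z_i\to z_\infty\in Z^0$ and $t_i\to t_\infty$, then the endpoints $a_{C_i},b_{C_i}$ both converge to $z_\infty$, hence $\diam(C_i^{t_i})\to 0$ and the whole image crosscut collapses to $h^{t_\infty}(z_\infty)$. This cannot happen: every $C\in\Hy$ satisfies $\diam(C)\ge\e$ -- a bound you invoke in the same sentence -- so the crosscuts carrying the $z_i$ do not degenerate. The typical situation is that $z_\infty$ is one endpoint of a single crosscut $C$ and $z_i\in C$ tends to that endpoint while the other endpoint stays a definite distance away; then the image crosscuts $C^{t_i}$ keep diameter bounded below and certainly do not shrink to a point. (Moreover, even if both endpoints did collapse, concluding that $C_i^{t_i}$ is small would require producing small arcs in $U^{t_i}$ joining $a_{C_i}^{t_i}$ to $b_{C_i}^{t_i}$; that is exactly Theorem~\ref{smallC}, whose proof uses the present theorem, so it is not available here.) What must actually be proved is local: only the terminal piece of $C^t$ near the approached endpoint is small. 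Concretely one needs the argument of Lemma~\ref{extendray} adapted to the ends of the hyperbolic crosscuts -- a small simple closed curve around $a^{t_\infty}$ built from a fundamental chain crosscut, the continuously varying arcs of Lemma~\ref{metricarc}, and Theorem~\ref{GHthm} applied to the terminal geodesic segment of $G^t$ -- carried out uniformly for $t$ near $t_\infty$ and, since $z_\infty$ may be approached through infinitely many distinct members of $\Hy$, uniformly over the endpoints accumulating at $z_\infty$; here the compactness of $\bigcup_{C\in\Hy}\ol{C}$ is what guarantees that $z_\infty$ is itself an endpoint of a crosscut in $\Hy$ and excludes limit continua meeting $Z$ in a nondegenerate set. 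Until such an estimate is supplied, the proposal does not establish continuity of $H$ along $\bd(U^0)$, which you yourself identify as the heart of the matter.
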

 We will say that the extended isotopy $H$ defined in Theorem~\ref{contcross} is the
 \emph{linear extended isotopy which preserves the hyperbolic crosscuts} in $\Hy$.

\section{Existence of short crosscuts}
It follows from the results of the previous section that if $C$ is the hyperbolic crosscut of $Z$
 which joins the points $a$ and $b$ in a complementary domain $U$ of $Z$ in $\sphere$,
 then we can extend the isotopy to an isotopy $H$ of $Z\cup C$ such
 that $H^t(C)=C^t$ is the hyperbolic crosscut joining the
 points $a^t$ and $b^t$.  We need to show that if the crosscut $C$ has small diameter, then  the
 crosscut $C^t$ also has small diameter.
 If $C$ is contained in the component $U$ of $\sphere\setminus Z$, then we denote
 by $U^t$ the component of $\sphere\setminus Z^t$ which contains $C^t$.

Given a hyperbolic crosscut $C$ of a  continuum $Z\subset \sphere$,
 we say that $C$ is a \emph{$\da$-hyperbolic crosscut} if the diameter
 of $C$ is less than $\da$. Note that we see $Z$ as a subset of the sphere $\sphere$ with the
 spherical metric $\rho$.

\begin{thm}\label{smallC} For each $\e>0$ there exists $\da>0$ such that if $x,y\in Z$ can be joined by a
$\da$-hyperbolic crosscut $C\subset U$, where $U$ is a component of $\sphere\setminus Z$, then for each $t$, $x^t$
and $y^t$ are joined by an $\e$-hyperbolic crosscut in $U^t$.
\end{thm}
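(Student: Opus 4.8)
The plan is to argue by contradiction using a compactness argument. Suppose the conclusion fails for some $\e>0$: then there exist a sequence $\da_n\to 0$, points $x_n,y_n\in Z$ joined by a $\da_n$-hyperbolic crosscut $C_n\subset U$, but such that for each $n$ there is a parameter $t_n\in[0,1]$ for which the hyperbolic crosscut $C_n^{t_n}$ of $Z^{t_n}$ joining $x_n^{t_n}$ and $y_n^{t_n}$ has $\diam(C_n^{t_n})\ge\e$. Passing to a subsequence, I may assume $t_n\to t_\infty$, that $x_n\to x_\infty$ and $y_n\to y_\infty$ in $Z$ (hence $x_n^{t_n}\to x_\infty^{t_\infty}$ and $y_n^{t_n}\to y_\infty^{t_\infty}$ by equicontinuity of the isotopy), and that $C_n\to Y$ in the Hausdorff metric on $C(\sphere)$. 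Since $\diam(C_n)\le\da_n\to 0$, the limit $Y$ is a single point, and since the endpoints $x_n,y_n$ of $C_n$ both lie in $Z$ and converge to $x_\infty$ and $y_\infty$ respectively, we must have $x_\infty=y_\infty=:p\in Z$ and $x_n,y_n\to p$.

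Next I would transfer this to the disk via the Riemann maps. Let $\al_n,\ba_n$ be the angles of the external rays of $Z^0$ landing on $x_n$ and $y_n$ (for $x_n$ and $y_n$ accessible from $U$, which they are as endpoints of a crosscut), and let $\al_n^{t},\ba_n^{t}$ be their images under the continuous angle isotopy $\al^t$. The hyperbolic crosscut $C_n^{t}=\vp^{t}(G_n^{t})$, where $G_n^t$ is the hyperbolic geodesic in $\disk$ joining $\al_n^t$ and $\ba_n^t$. The key point is that $C_n^0=C_n$ has small diameter, so by (a spherical-metric form of) the Gehring–Hayman theorem applied to the arc $(\vp^0)^{-1}(C_n)$ versus the geodesic $G_n^0$, and by $C_n\subset B(p,\da_n)$, the prime ends corresponding to $\al_n$ and $\ba_n$ are ``close'': concretely, $x_n$ and $y_n$ are separated from the rest of $Z^0$ by a short crosscut, so the angular arc $[\al_n,\ba_n]$ on one side of $S^1$ has $\diam$ shrinking with $\da_n$ — this is where the characterization of prime ends and the fact that $\diam(C_n)\to 0$ forces the corresponding angular interval to shrink. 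Since $\al^{t_\infty}$ is a homeomorphism of $S^1$, the arc $[\al_n^{t_\infty},\ba_n^{t_\infty}]$ also shrinks, and by continuity of $\al^t$ in $t$, so does $[\al_n^{t_n},\ba_n^{t_n}]$. Hence the geodesic $G_n^{t_n}$ has both endpoints converging to a common angle $\ta_\infty$ on $S^1$, with $\ta_\infty$ the angle of the ray of $Z^{t_\infty}$ landing on $p^{t_\infty}$ (using Theorem~\ref{contarc} and continuity of the landing under the isotopy, Theorem~\ref{ptpreserved}).

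Finally, I would derive the contradiction from the fact that a hyperbolic crosscut whose endpoint angles tend together must have image of diameter tending to $0$. This is the content of a second application of the Gehring–Hayman theorem in the other direction: by Lemma~\ref{metricarc} and Theorem~\ref{contarc} there is a continuously varying metric arc $L^t\subset\complex\setminus Z^t$ landing on $p^t$, and $(\vp^{t_\infty})^{-1}(L^{t_\infty})$ lands at the angle $\ta_\infty$; the geodesic $G_n^{t_n}$, having both endpoints near $\ta_\infty$, is ``captured'' between $Z$ and a fixed small crosscut cutting off $\ta_\infty$ (chosen as in the proof of Lemma~\ref{extendray}, using a fundamental chain of crosscuts for the prime end $\ta_\infty$ of diameter $<\e/K$), so $\diam(C_n^{t_n})=\diam(\vp^{t_n}(G_n^{t_n}))<\e$ for $n$ large, contradicting $\diam(C_n^{t_n})\ge\e$. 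The main obstacle I anticipate is the bookkeeping in the first Gehring–Hayman step: turning ``$\diam(C_n)$ small'' into ``angular interval $[\al_n,\ba_n]$ small'' requires knowing that a short crosscut near an accessible point cuts off a correspondingly small arc of prime ends, which needs the compactness/prime-end machinery and care that we are cutting off the arc on the correct side of $S^1$ and that the cut-off arc genuinely shrinks (rather than its complement) — this uses that $x_n,y_n$ are distinct but both tending to $p$, and a uniform modulus-of-continuity estimate for $(\vp^0)^{-1}$ near the prime end over $p$ is not available, so the argument must stay on the ``crosscut'' side, bounding the harmonic measure / extremal length of the sub-arc directly via the short connecting arc $C_n$.
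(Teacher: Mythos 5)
Your reductions (passing to $t_n\to t_\infty$, $x_n,y_n\to p$, and showing via a length--area/harmonic-measure estimate that the angular gap between $\al_n$ and $\ba_n$ tends to $0$, hence so does the gap between $\al^{t_n}(\al_n)$ and $\al^{t_n}(\ba_n)$) are sound, but the final step contains a genuine gap, and it is exactly where the paper's real work lies. The implication you invoke --- ``a hyperbolic crosscut whose endpoint angles tend together must have image of diameter tending to $0$'' --- is false, even for a \emph{fixed} conformal map and even if in addition the images of the two endpoints converge to a common point of $Z$. Take $U$ to be a comb domain (the unit square minus teeth $\{1/n\}\times(0,1/2]$) and let $a_n,b_n$ be accessible points at the bottoms of two distinct gaps adjacent to the $n$-th tooth: their angles converge to the single prime end whose impression contains the limit segment, their images both converge to the same boundary point, yet any arc in $U$ joining them must leave the gaps, so the hyperbolic crosscut joining them has diameter bounded below by $1/2$. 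Thus ``angles close $+$ endpoint images close'' does not force $\diam(C_n^{t_n})<\e$; some use of the hypothesis beyond these two consequences is indispensable. Your proposed ``capture'' does not supply it: being cut off by a fundamental-chain crosscut of small diameter only places $\vp^{t_n}(G_n^{t_n})$ inside the shadow of that crosscut, and shadows of small crosscuts can be large (that is what a nondegenerate impression means); Gehring--Hayman needs a competitor arc in $U^{t_n}$ joining the \emph{same two prime ends} $\al^{t_n}(\al_n)$, $\al^{t_n}(\ba_n)$ with small image diameter, which the capture does not produce. Two further obstructions: the fundamental chain is chosen for $\vp^{t_\infty}$ while the geodesics are mapped by $\vp^{t_n}$, and Carath\'eodory kernel convergence gives no control near $S^1$, so the trapping does not transfer across $t$; and the limit point $p$ need not be accessible, so the prime end $\ta_\infty$, the arc $L^t$ landing on $p^t$ (Lemma~\ref{metricarc} requires accessibility), and the appeal to Theorem~\ref{ptpreserved} at $p$ are not available in general. (Also, the negation of the statement should allow the crosscuts $C_n$ to lie in varying components $U_n$, as in the paper; fixing one $U$ loses the required uniformity, and with varying $U_n$ there is no single limiting prime end to work with.)

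What is missing is precisely the paper's core construction: for \emph{every} $t$ it produces a short \emph{metric} crosscut $M(t)\subset U^t$ with $M(t)\cap Z^t=\{x^t,y^t\}$ and $\diam(M(t))<\e/K$, and only then applies Gehring--Hayman once to conclude $\diam(C^t)<\e$. The existence of $M(t)$ is obtained by a topological transport argument: the region $Q^t$ built from the ball $D$ and the moving crosscut $C^t$ via the odd/even-domain parity with respect to a far-away accessible point $w^t$, the continuity of $t\mapsto Q^t$, the first-entry argument showing $Q^t\cap Z^t\subset B(O,\e/3K)$, and the minimal-intersection Claim producing the arc inside $Q^t\cap B(O,\e/2K)$. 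Your outline replaces this with an assertion that fails in comb-type geometry, so as written the proof cannot be completed without adding an argument of essentially this kind.
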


\begin{proof}
 Suppose that the Theorem fails for some $\e>0$. Then there exist $x_n,y_n\in Z^0=Z$ and
 a sequence of $1/n$-hyperbolic crosscuts
 $C_n$ in complementary domains $U_n$ joining them and $t_n\in[0,1]$
 such that the points $x^{t_n}_n$ and $y^{t_n}_n$
  are not  joined by an $\e$-hyperbolic crosscut in $U^{t_n}_n$. Then there exists $0<\e'<\e$ and
  $w_n\in \bd(U_n)$, accessible from $U_n$, such that $\rho(w_n^t,O)>\e'$ for all
   $n$ and all $t\in[0,1]$.
  Without loss of generality, the origin $O\in Z$,
   $\lim C_n=\{O\}$ and $h^t(O)=O$ for all $t$.

Let $K$ be the universal constant from Theorem~\ref{GHthm}.
Choose $0<\da<\e'/3$ such that if $h^t(z)\in \ol{B(O,\da)}$ for some $t\in[0,1]$, then $h^s(z)\in B(O,\e/3K)$ for all
$s\in [0,1]$. Choose $n_0$ such that $C_{n_0}\subset B(O,\da)$ and $\{x^s_{n_0},y^s_{n_0}\}\subset B(O,\da)$ for
all $s\in[0,1]$. From now on we fix this $n=n_0$ and, hence, we can omit $n$ from the notation. In particular
we have a fixed component $U$ of $\sphere\setminus Z$, three points $x,y,w\in\bd(U)$ with
$x$ and $y$ joined by the hyperbolic
crosscut $C\subset U\cap B(O,\da)$, with $x^s,y^s\in B(O,\da)$ and $w^s\in\complex\setminus \ol{B(O,\da)}$
 for all $s\in[0,1]$.  By Theorem~\ref{contcross}
we can extend the isotopy $h$ to an isotopy $H$ of $Z\cup C$
 such that $H^t(C)=C^t$ is the hyperbolic crosscut joining $x^t$ to
$y^t$ in $U^t\subset \sphere\setminus Z^t$ for each $t$.

Let $D$ be the closed $\da$-ball centered at $O$. For each $t\in[0,1]$,
let $P^t=D\cup C^t$. Since $\bd(P^t)$ is contained in $S(O,\da)\cup C^t$,
which is a finite union of arcs, $\bd(P^t)$ contains no continuum
of convergence and each sub-continuum of $\bd(P^t)$ is locally connected
and arcwise connected \cite{whyb42}.

Since $C^t$ is an arc, the components  $\{T_i\}$ of $C^t\setminus D$ form a
null sequence. For each $i$, $\ol{T_i}$ is an arc and $\ol{T_i}\cap D$
consists of the endpoints of $T_i$. Each point of $\sphere\setminus P^t$ can be joined
to $w^t$ by an arc in $\sphere\setminus D$ which meets $P^t$ in a finite set.

Suppose that $V$ is a component of $\sphere\setminus P^t$. We say that $V$ is an \emph{odd domain}
(respectively \emph{even domain}) \emph{of $\sphere\setminus P^t$} if there is a closed arc
$A\subset \sphere\setminus D$ from $w^t$ to
a point in $V$ such that $|A\cap C^t|$ is odd (respectively, even) and $A$ is transverse to $C^t$ at each point of
$A\cap C^t$. This definition is independent of the choice of the arc and the point in $V$.

Let $Q^t=P^t\cup \bigcup\{V\mid V \text{ is an odd domain of } \sphere\setminus  P^t\}$. The boundary
of each odd domain $V$ of $\sphere\setminus P^t$ is a simple closed curve which meets $D$ and there exists
a $T_i$ such that $T_i$ is  contained in $\bd(V)$ and $T_i\cup D$ separates $V$ from $w^t$ in $\complex\setminus D$.
Also each $T_i$ is contained in the boundary of exactly one odd domain of $\sphere\setminus  P^t$.
Since the odd domains form a null family, $Q^t$ is a locally connected continuum.

Let $t_i$ converge to $t\in[0,1]$. We prove that $\lim Q^{t_i}=Q^t$. Note that $\lim P^{t_i}=P^t$.
Let $z\in \sphere\setminus P^t$. It suffices to prove that $z\in Q^t$ if and only if
$z\in Q^{t_i}$ for all sufficiently large $i$. Let $A\subset \sphere\setminus B(O,\rho(O,z))$
 be a piecewise linear arc from $z$
to $w^t$ which witnesses whether or not $z\in Q^t$. Then $A$ meets only
finitely many, without loss of generality $T_1,\dots,T_n$, of the open arcs $T_j$.
Let $H:Z\cup C\to\sphere$ be the extended linear isotopy of Theorem~\ref{contcross} such
that $H^t(C)=C^t$. Let $\da<\da'<\min(\rho(z,O),2\e'/3)$ then for all
$i$ sufficiently large $\ol{B(O,\da')}\cup T_{j}$ separates $z$ from $w^t$
if and only if $\ol{B(O,\da')}\cup H^{t_i}((H^t)^{-1}(T_{j}))$ does for each $j=1,\dots,n$ and
$T_j\cap A\ne\0$ if and only if $H^{t_i}((H^t)^{-1}(T_j))\cap A\ne\0$ for all $j$.

Note that $|T_j\cap A|$ is odd if and only if $\ol{B(O,\da')}\cup T_j$ separates
$z$ from $w^t$. Fix any large $i$ and choose an arc $M$ very close to $A$ which witnesses
 whether $z$ is in $Q^{t_i}$. Then $|M\cap H^{t_i}((H^t)^{-1}(T_j))|=|A\cap T_j|\negthickspace\mod 2$
 for $j=1,\dots,n$ and $M\cap H^{t_i}((H^t)^{-1}(T_j))=\0$ for all $j>n$. Hence
 $z\in Q^{t_i}$ if and only if $z\in Q^t$ as desired.

 Let $z^t\in Q^t\cap Z^t$. We prove that $\rho(z^t,O)<\e/3K$. We may assume that $z^t\not\in\{x^t,y^t\}\cup D$.
 Let $s_0=\inf\{s\in[0,1]\mid z^s\in Q^s\setminus D\}$. Since $Q^0=D$ and $Z^s\cap C^s=\0$ for all $s$,
 $z^{s_0}\in D$. Hence, by the choice of $\da$, $\rho(O,z^t)<\e/3K$.

It remains to prove the following:

 \emph{Claim.} $Q^t\cap B(O,\e/2K)$ contains an arc $A$ such that $A\cap Z^t=\{x^t,y^t\}$.

 Proof of Claim. Fix $t\in [0,1]$. Then $C^t$ is an arc such that $C^t\cap Z^t=\{x^t,y^t\}$.
 After a small perturbation of $C^t$ we may assume that $C^t\cap S(O,\e/2K)$ is finite and
 all intersections are transverse. Note that the definition of an odd domain
 of $\sphere\setminus P^t$
 was with respect to $P^t=D\cup C^t$.
 In what follows we will use the same definition but now with respect
 to  $P^t_M=D\cup M$, where $M\subset U^t\cup\{x^t,y^t\}$ is an arc such that:
 \begin{enumerate}
 \item $M\cap Z^t=\{x^t,y^t\}$ and $x^t$ and $y^t$ are endpoints of $M$,
  \item $M\cap S(O, \e/2K)$ is finite and all intersections are transverse,
 \item \label{smallZ}
 for each odd domain $V$ of $\sphere\setminus P^t_M$ and each $z^t\in Z^t\cap V$, $\rho(z^t,O)<\e/3K$,
 \item  $n=|M\cap S(O,\e/2K)|$ is minimal.
 \end{enumerate}

 If $n=0$ we are done. Note that $n=1$ is impossible since all
 intersections of $S(O,\e/2K)$ and $M$ are transverse and both
 endpoints of $M$ are in $B(O,\e/2K)$. Hence, assume $n>1$.    Let $Q^t_M=P^t_M\cup \bigcup\{V \mid V
 $ is an odd domain of  $\sphere\setminus  P^t_M\}$.
  Let $S_i$ be all components of $S(O,\e/2K)\setminus M$. Since
 each component $M_j$ of $M\setminus D$ is an arc
 which locally separates the plane, points on one side of $M_j$ are
 in  an even domain and points on the other side are in an odd
 domain. Hence, each arc $S_i$ is contained in a complementary
 domain $V_i$ of $\sphere\setminus  P^t_M$ and these domains are
 alternately even and odd moving around the circle $S(O,\e/2K)$. In
 particular, $n$ is even. We may order $M$ so
 that $x^t<y^t$ and we  write intervals
 in $M$ as in $\real$.

Let $\M=\{M_i\}$ be the collection of all components of $M\setminus
\ol{B(O,\e/2K)}$. We can define a partial order $\prec$ on $\M$ by
$M_1\prec M_2$ if $M_2$ separates $M_1$ from $w^t$ in
$\complex\setminus \ol{B(O,\e/2K)}$. Assume that $M_1=(a_1,b_1)$
(with $a_1<b_1$)  is a minimal element of $\M$. Then $M_1\cup
\ol{B(O,\e/2K)}$ bounds a disk $V_1$ whose closure meets
$\ol{B(O,\e/2K)}$ in an arc $S_1\subset S(O,\e/2K)$ and $S_1\cap
M=\{a_1,b_1\}$. Then $S_1$ is either contained in an an even or an odd
domain of $\sphere \setminus P^t_M$.

Subcase 0. Suppose that  $Z^t\cap S_1=\0$ (this must be the case if $S_1$ is contained
in an odd domain). In this case choose
$a'_1<a_1<b_1<b'_1$, with $a'_1$ in  $B(O,\e/2K)$ very close to $a_1$ and $b'_1$ in $ B(O,\e/2K)$ very
close to $b_1$, and an arc $S'_1\subset B(O,\e/2K)$ very close to
$S_1$ from $a'_1$ to $b'_1$ such that $S'_1\cap Z^t=\0$. Then $Z^t$ is disjoint from the
bounded complementary domain $B$ of the simple closed curve $F=S'_1 \cup (a'_1,b'_1)$. Hence
there exists a homotopy of the plane which is the identity on
$Z^t\cup S'_1\cup [x^t,a'_1]\cup [b'_1,y^t]$ and shrinks $B$ to
$S'_1$. Let $M'=S'_1\cup [M\setminus (a_1,b_1)]$. Then $z\in Z^t$ lies in an odd domain of
$\sphere\setminus P^t_M$ if and only if z lies in
an odd domain of $\sphere\setminus[D\cup M']$. Thus
$M'$ satisfies (\ref{smallZ}). Clearly $M'$ satisfies (1-2) and
since $|M'\cap S(O,\e/2K|<n$ we have a contradiction with the
minimality of $n$.

Hence we may assume that  $S_1\cup V_1$ is contained in an even domain and
$Z^t\cap S_1\not=\0$. Then there
exists $M_2=(a_2,b_2)\in \M$ (with $a_2<b_2$) such that $M_2$  is the immediate successor of $M_1$
in $\M$. Let $V_2$ be the component of $\sphere\setminus [\ol{V_1\cup  B(O,\e/2K)}\cup M_2]$ whose
closure contains the arc $(a_1,b_1)$. Since $M_2$ is the immediate successor of $M_1$
in $\M$, there exists an arc $J\subset [V_2\setminus M]\cup\{j_1,j_2\}$ with one endpoint of $J$,
$j_1\in(a_1,b_1)$ and the other endpoint of $J$, $j_2\in (a_2,b_2)$. Moreover,
since $V_1$ was even, $J$ is contained in an odd domain and $J\cap Z^t=\0$.
We will examine the circular (counter clockwise) order $<_C$ of the four points
$a_1,b_1,a_2,b_2$ around the circle $S(O,\e/2K)$.

\begin{figure}\refstepcounter{figure}\label{subcase1}\addtocounter{figure}{-1}
\begin{center}\hspace*{-2.5in}
\includegraphics[width=6.5truein]{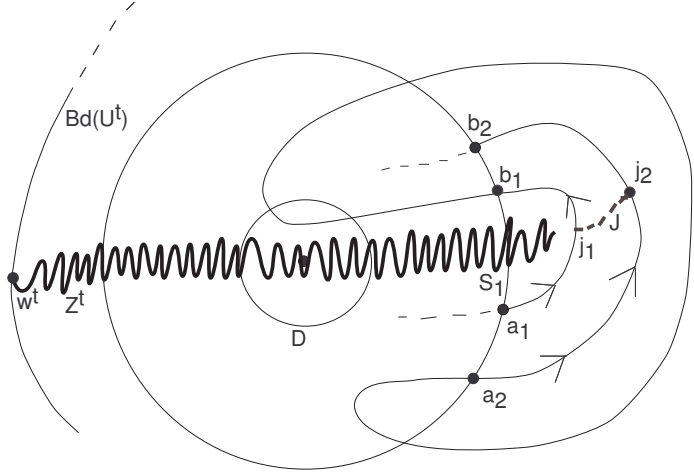}
\caption{Subcase 1  in the proof of Theorem~\ref{smallC}}
\end{center}
\end{figure}

Subcase 1. $a_2<_C a_1<_C  b_1 <_C b_2$. We have either $a_1<a_2<b_1<b_2$ (see figure~1)
or $a_2<b_2<a_1<b_1$. Since $w^t\in Z^t\cap \bd(U^t)$, $Z^t\cap S_1\not=\0$ and $M\cap Z^t=\{x^t,y^t\}$,
in either case (see the gate theorems in \cite[page 36]{beck74}),
the simple closed curve $F=J\cup [j_1,j_2]$ separates $x^t$ from $y^t$.
Since $Z^t\cap F=\0$, this contradicts the connectedness of $Z^t$.

Subcase 2. $b_2<_C a_1 <_C b_1<_C a_2$. Then either $a_1<b_1<a_2<b_2$ or $a_2<b_2<a_1<b_1$.
Since $M\cap Z^t=\{x^t,y^t\}$, $w^t\in Z^t\cap \bd(U^t)$ and $Z^t\cap S_1\not=\0$,
$x^t$ and $y^t$ are contained in the unbounded
component of $F=J\cup [j_1,j_2]$ and the proof proceeds as in Subcase 0, where $F$ is now
$J\cup [j_1,j_2]$.

Other cases are similar. This completes the proof of the Claim.

  Hence for each $t$ there exists a
crosscut $M(t)$ joining $x^t$ to $y^t$ in
$U^t$ such that $\diam(M(t))<\e/K$. By
Theorem~\ref{GHthm}, the diameter of the
hyperbolic crosscut $C^t$ is less than
$\e$ for all $t$. This contradiction
completes the proof.

 \end{proof}

\section{Extending the isotopy over $\complex$}

Now that we know how to extend the isotopy over hyperbolic crosscuts,
it remains to define the extension over all complementary domains $U$ of $Z$. Easy examples show that if we choose the
hyperbolic crosscuts without care the extension  may not be continuous.
Fortunately a suitable set of hyperbolic crosscuts exists. Fix a component $U$ of
$\sphere\setminus Z$ and let $\mathcal{B}$ be the collection of all maximal open balls $B(z,r)\subset U$
(that is open balls in the spherical metric and such that $|S(z,r)\cap Z|\ge 2$).
Let $\mathcal{C}$ be the collection of all centers of such balls
and for $c\in\mathcal{C}$ let $r(c)$ be the corresponding radius.  Note that for each $c\in\mathcal{C}$, $B(c,r(c))$ is
conformally equivalent with the unit disk $\disk$ and, hence, can be endowed with the
hyperbolic metric. Let
$F(c)$ be the convex hull of the set $S(c,r(c))\cap X$ in $B(c,r(c))$ using the
hyperbolic metric on the ball $B(c,r(c))$. The following Theorem is due to Kulkarni and Pinkall:

\begin{thm}[\cite{kulkpink94}]\label{KP} For each $z\in U$ there exists a unique $c\in\mathcal{C}$
such that $z\in F(c)$.
\end{thm}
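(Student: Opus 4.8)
The plan is to establish Theorem~\ref{KP} by showing that the sets $F(c)$, $c\in\mathcal C$, both cover $U$ and are pairwise disjoint. First a few preliminary remarks. Since $Z$ is a non-degenerate continuum (as it is in this paper), $U$ does contain round balls whose boundary circle meets $Z$ in at least two points, so $\mathcal C\ne\0$; and since $F(c)$ is the hyperbolic convex hull of a closed subset of the ideal circle $S(c,r(c))$ containing at least two points, it is a nonempty closed convex subset of the \emph{open} disk $B(c,r(c))$ — in particular $z\in F(c)$ forces $z\in B(c,r(c))$, and $c$ itself need not lie in $F(c)$. One should also record that the family $\mathcal B$ is an antichain under inclusion: if $B(c_1,r_1)\subsetneq B(c_2,r_2)$ were both in $\mathcal B$, then every contact point $p\in S(c_1,r_1)\cap Z$ would lie in $\overline{B(c_1,r_1)}\subset\overline{B(c_2,r_2)}$ but outside $B(c_2,r_2)$, hence on $S(c_2,r_2)$, giving $r_2=\rho(c_2,p)\le\rho(c_2,c_1)+\rho(c_1,p)\le(r_2-r_1)+r_1=r_2$; equality throughout forces $c_1$ onto the geodesic $[c_2,p]$ with $\rho(c_1,p)=r_1$, which pins $p$ down uniquely and contradicts $|S(c_1,r_1)\cap Z|\ge2$.

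The cleanest way to see both the covering and the disjointness is to pass to the upper half-space $\mathbb{H}^3$ with $\partial_\infty\mathbb{H}^3=\sphere$ and invoke the convex-hull/dome picture of Thurston and Epstein--Marden. Each round ball $B\subset\sphere$ is the ideal boundary of a unique totally geodesic plane $\widehat B\subset\mathbb{H}^3$, and there is a canonical isometry $\widehat B\to B$ (onto $B$ with its Poincar\'e metric) restricting to the identity on $\partial B=\partial_\infty\widehat B$. Let $\mathrm{Dome}(U)$ be the dome over $U$, i.e.\ the boundary facing $U$ of the hyperbolic convex hull of $\partial U$ in $\mathbb{H}^3$; it is a complete hyperbolic surface bent along a geodesic lamination, and — this is where simple connectedness of $U$ is used — the nearest-point retraction $r\colon U\to\mathrm{Dome}(U)$ (for $z\in U\subset\partial_\infty\mathbb{H}^3$, the point where an expanding horoball based at $z$ first meets $\mathrm{Dome}(U)$) is a homeomorphism. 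The support planes of $\mathrm{Dome}(U)$ are exactly the planes $\widehat{B(c,r(c))}$ for $B(c,r(c))\in\mathcal B$ — a support plane contains a geodesic leaf or a flat piece of the dome, hence meets $\partial U$ in at least two ideal points, so the corresponding ball has at least two contact points with $Z$ — and under the isometry above the flat piece of $\mathrm{Dome}(U)$ carried by $\widehat{B(c,r(c))}$ (a single geodesic when $|S(c,r(c))\cap Z|=2$) is carried onto $F(c)$. Since the flat pieces together with the leaves of the bending lamination partition $\mathrm{Dome}(U)$, pulling back along the homeomorphism $r$ shows that $\{F(c):c\in\mathcal C\}$ partitions $U$, which is Theorem~\ref{KP}. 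This is in substance the argument of Kulkarni and Pinkall \cite{kulkpink94}, whose theorem we are content to quote.

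I expect the dome machinery to be the real obstacle: proving that $r\colon U\to\mathrm{Dome}(U)$ is a homeomorphism when $U$ is simply connected, and carrying out the correspondence between $\mathcal B$ and the faces of $\mathrm{Dome}(U)$ with $F(c)$ matching the corresponding face. Everything else is soft, and disjointness in fact has a short proof avoiding $\mathbb{H}^3$: if $z\in F(c_1)\cap F(c_2)$ with $B_i=B(c_i,r_i)$ distinct members of $\mathcal B$, then $z$ lies in the open disks $B_1,B_2$, and by the antichain remark neither disk contains the other, so $S_1=\partial B_1$ and $S_2=\partial B_2$ cross in exactly two points $p,q$, necessarily distinct from $z$. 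As $B_i\cap Z=\0$, the set $S_1\cap Z$ lies in the closed subarc of $S_1$ outside $\overline{B_2}$, whence $F(c_1)$ — the hyperbolic hull in $B_1$ of a subset of that arc — lies in the closed half-disk of $B_1$ cut off by the $B_1$-geodesic from $p$ to $q$ on the side away from $B_2$, and symmetrically $F(c_2)$ lies in the half-disk of $B_2$ cut off by the $B_2$-geodesic from $p$ to $q$ on the side away from $B_1$; a short planar argument shows these two half-disks meet only along $\{p,q\}$, contradicting $z\in F(c_1)\cap F(c_2)$. It is the covering statement that resists an equally elementary treatment — the naive strategy of growing a round ball through $z$ until it becomes maximal is delicate because enlarging the ball need not keep $z$ inside it — so it is there that one genuinely needs the dome, or the corresponding careful compactness argument of \cite{kulkpink94}.
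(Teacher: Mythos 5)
The paper does not prove Theorem~\ref{KP} at all: it is quoted outright from Kulkarni--Pinkall \cite{kulkpink94}, so there is no argument of the authors to compare yours with, and simply citing the result (as you ultimately offer to do) is exactly what the paper does. Judged as a proof sketch, however, your covering argument rests on a false statement. The nearest-point retraction $r\colon U\to\mathrm{Dome}(U)$ is a proper surjection and a homotopy equivalence, but it is \emph{not} injective in general, even for simply connected $U$; it fails to be injective precisely where the bending lamination of the dome has an atom. Take $U$ to be the complement of a solid square (a complementary component of the continuum $Z=$ boundary curve; the complement of a straight segment gives the degenerate version of the same phenomenon). All the support planes containing the bending geodesic $g$ with ideal endpoints two adjacent corners $p,q$ have ideal disks forming a one-parameter family of maximal balls, each with contact set exactly $\{p,q\}$; the corresponding sets $F(c)$ are disjoint circular arcs from $p$ to $q$ sweeping out a crescent in $U$, and $r$ collapses that entire crescent onto the single leaf $g$. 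Thus the sets $F(c)$ are \emph{not} the $r$-preimages of the faces and leaves of the dome, and the step ``pull back the partition of $\mathrm{Dome}(U)$ along the homeomorphism $r$'' does not go through; what you flagged as the hard step is in fact not true.

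The dome route can be repaired, but the correct mechanism is convex separation rather than injectivity of $r$: normalize $z=\infty$ in the upper half-space model, let the horoball based at $\infty$ descend until it first touches the hull of $\partial U$ at a point $x$; since hull and horoball are convex with disjoint interiors, some geodesic plane $P$ through $x$ separates them, hence is a support plane of the hull tangent to the horoball at $x$; tangency says exactly that $x$ corresponds to $\infty$ under the canonical identification of $P$ with its ideal disk, which is a maximal ball, so $z\in F(c)$ for that ball. Moreover, contrary to your closing expectation, the covering statement does admit an elementary two-dimensional proof: after a M\"obius change of coordinates putting $z$ at $\infty$, let $D_0$ be the smallest closed round disk containing $\sphere\setminus U$; the classical fact that the contact set $\partial D_0\cap(\sphere\setminus U)$ is not contained in any open half-circle, transported by the reflection through $\partial D_0$ (which fixes $\partial D_0$ pointwise and interchanges $\infty$ with the centre of $D_0$), says precisely that $z$ lies in the hyperbolic hull of the contact set inside the maximal ball $\sphere\setminus D_0$. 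Uniqueness also falls out: if $D_1\ne D_2$ both contain $\sphere\setminus U$ and both contact sets avoid open half-circles, then, taking $D_2$ of the larger radius, $\partial D_2\cap \overline{D_1}$ is an arc of angular measure less than $\pi$, forcing the contact set of $D_2$ into an open half-circle. Your elementary disjointness argument is essentially correct --- it becomes transparent after sending the two intersection points $p,q$ of the circles to $0$ and $\infty$, when the two hulls lie in sectors meeting only at the vertex --- with the small caveats that the phrase ``on the side away from $B_2$'' should be replaced by the precise half bounded by the geodesic through $p,q$, and that the antichain inequality $\rho(c_2,c_1)\le r_2-r_1$ deserves a word in the spherical metric.
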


Note that the collection of chords in the boundaries of all $F(c)$ form
a lamination  of $U$ in the sense of Thurston \cite{thur85}. As in \cite{thur85}
we will call the chords in this lamination leaves.
Then two such leaves do not cross each other (i.e., if $\ell\ne \ell'$ are leaves, then
$\ell\cap \ell'\cap U=\0$) and any convergent sequence of leaves is either a leaf,
 or a point in $Z$. In particular, the subcollection  of leaves of diameter
greater or equal to $\e$ is compact for each $\e>0$. This collection of leaves    will naturally
provide us with the required collection of hyperbolic crosscuts by simply replacing each leaf in
the lamination
 by the hyperbolic crosscut joining its endpoints. The  collection $\mathcal{H}$ of such hyperbolic crosscuts
will be called the \emph{hyperbolic KP-lamination of  $U^0$}.
The union of all the hyperbolic crosscuts in $\mathcal{H}$ will be denoted
by $\mathcal{H}^*$. A \emph{gap} $G$ of $\Hy$ is the closure of a component of $U\setminus \Hy^*$.
By Theorems~\ref{contcross} and \ref{smallC} we can extend the isotopy $h$ over $\Hy^*$.
To finish the proof we must extend the isotopy over all gaps.

\begin{thm}\label{mainsphere} Suppose that $h^t$ is an isotopy of a plane continuum $Z$,
which we consider as a subset of the sphere $\sphere$,
with $h^0=id|_Z$.
Then there exists an extension to an isotopy $H^t:\sphere\to\sphere$ such that $H^0=id_{\sphere}$.
\end{thm}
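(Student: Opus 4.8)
The plan is to assemble the pieces developed in the preceding sections into a global extension, treating each complementary domain $U$ of $Z$ separately and then checking that the resulting homeomorphisms of $\sphere$ vary continuously in $t$. Fix a component $U=U^0$ of $\sphere\setminus Z$. By Theorem~\ref{KP}, the Kulkarni--Pinkall construction produces the canonical lamination of $U$ whose leaves of diameter $\ge\e$ form a compact set for every $\e>0$; replacing each leaf by the hyperbolic crosscut joining its endpoints yields the hyperbolic KP-lamination $\Hy$ of $U^0$. First I would apply Theorem~\ref{contcross} to the subfamily $\Hy_n\subset\Hy$ of crosscuts of diameter $\ge 1/n$ (a compact set of disjoint hyperbolic crosscuts, using that $Z$ is compact so only finitely many meet a given ball of definite size, and the endpoints are governed by the continuous angle isotopy $\al^t$). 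This gives, for each $n$, a linear extended isotopy $H_n$ over $Z\cup\Hy_n^*$. Because the linear extended isotopy is canonical (built from $\al^t$, $\vp^t$, $g$ and the linear map on chords), these extensions are compatible: $H_{n+1}$ restricted to $Z\cup\Hy_n^*$ agrees with $H_n$. Taking the union over $n$ we obtain a well-defined isotopy $H$ on $Z\cup\Hy^*$, and Theorem~\ref{smallC} guarantees that crosscuts of small diameter are carried to crosscuts of small diameter, so that $H^t$ extends continuously to $Z\cup\ol{\Hy^*}$ for each $t$, and by the same estimate $H$ is jointly continuous there.

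Next I would extend over the gaps. Let $G$ be a gap of $\Hy$, i.e. the closure in $\sphere$ of a component of $U\setminus\Hy^*$. The boundary $\bd(G)$ consists of a (possibly empty) piece of $Z$ together with hyperbolic crosscuts from $\Hy$; under $H^t$ the boundary is mapped homeomorphically onto $\bd(G^t)$, where $G^t$ is the corresponding gap of $\Hy^t$ for $Z^t$. Since a gap together with its image under an orientation-preserving boundary homeomorphism is a topological disk (or, when $G$ is all of a round ball minus finitely many leaves bounding it, still a disk, by the Kulkarni--Pinkall structure), I would invoke the Schoenflies theorem to extend $H^t|_{\bd(G)}$ to a homeomorphism $G\to G^t$. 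To make this choice canonical and continuous in $t$, I would use the Cayley--Klein model: each gap pulls back under $(\vp^0)^{-1}\circ(\text{KP chart})$ to an ideal polygon in $\disk$ whose sides are chords, and the linear isotopy $L^t$ on those chords, radially coned (Alexander-style) to the interior, produces an explicit isotopy of the polygon; transporting back by $\vp^t$ gives the extension over $G$ that agrees on $\partial G$ with the already-defined $H^t$. Doing this simultaneously over all gaps of all complementary domains, and using that the diameters of gaps go to zero off any compact exhaustion (again Theorem~\ref{smallC} and compactness of $Z$), yields a homeomorphism $H^t:\sphere\to\sphere$ for each $t$ with $H^0=id_{\sphere}$.

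Finally I would verify that $(z,t)\mapsto H^t(z)$ is continuous on $\sphere\times[0,1]$. On $Z\times[0,1]$ this is the hypothesis; on $\ol{\Hy^*}\times[0,1]$ it follows from Theorems~\ref{contcross} and \ref{smallC} as above; on a single gap $G$ it follows from the explicit coned formula together with Carath\'eodory kernel convergence of $\vp^t$ on compacta. The only delicate point is continuity at a point $z\in Z$ approached from the interior of $U$ through infinitely many distinct gaps: here one uses that for any $\e>0$ only finitely many gaps have diameter $\ge\e$ (compactness of $\Hy_n$) and that $H^t$ moves each small gap to a small gap (Theorem~\ref{smallC}), so points near $z$ stay near $h^t(z)=H^t(z)$ uniformly in $t$. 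I expect this last uniformity — controlling the extension on the infinitely many small gaps clustering at boundary points of $Z$ simultaneously for all $t$ — to be the main obstacle, and it is precisely what Theorem~\ref{smallC} was proved to handle. Since $\sphere\setminus Z$ has at most countably many components and the same argument applies verbatim to each, we obtain the desired global isotopy $H^t:\sphere\to\sphere$ with $H^0=id_{\sphere}$, completing the proof.
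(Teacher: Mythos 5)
Your outline follows the paper's own route: the Kulkarni--Pinkall lamination of each complementary domain (Theorem~\ref{KP}), extension over its hyperbolic crosscuts via Theorem~\ref{contcross} with Theorem~\ref{smallC} controlling the small ones, a coned extension over the gaps in the Cayley--Klein model, and the fact that crosscuts lying in distinct complementary domains have diameters tending to $0$. Exhausting the lamination by the compact subfamilies of crosscuts of diameter at least $1/n$ and observing that the resulting linear extended isotopies are compatible is a reasonable way to make the application of Theorem~\ref{contcross} precise.

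The genuine gap is in the extension over gaps and its continuity. First, your finiteness claim --- that for each $\e>0$ only finitely many gaps have diameter at least $\e$ --- does not follow from compactness of the family of large crosscuts and is false in general: gaps have disjoint interiors, but long thin gaps can accumulate on a single leaf (the paper's proof explicitly allows a sequence of gaps $G_i$ whose Hausdorff limit is a leaf $C$), so your continuity argument at such a leaf, and at boundary points of $Z$ approached through these gaps, does not go through as stated. Second, your Alexander-style coning never specifies the cone point; without a canonical, continuously varying choice there is no reason the gap extensions fit together continuously with the already-defined map on the crosscuts at a leaf that is a limit of gaps. This is precisely what the paper's barycenter device is for: each gap is hyperbolically convex in the Cayley--Klein model, one sends the barycenter $b^0_G$ to the barycenter $b^t_G$ of the image gap and cones the boundary isotopy from it, and then, when gaps $G_i$ converge to a leaf $C$, their barycenters converge to the barycenter of $C$ (sequences of gaps converging to a point of $Z$ being handled by Theorem~\ref{smallC}); this convergence is what makes $H^t$ continuous across the lamination. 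Supplying such a canonical cone point together with this convergence statement (or quoting the barycenter construction) is needed to close your argument; the rest of your outline, including the compatibility of the partial extensions and the treatment of countably many complementary domains, matches the paper.
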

\begin{proof} Let $\{U_n\}$ be all the components of $\sphere\setminus Z$. For each $n$ let $\mathcal{H}_n$
be the hyperbolic KP-lamination of $U_n$. Since the diameter of maximal balls contained in distinct
components $U_n$ converges to $0$, it follows from Theorems~\ref{smallC} and \ref{GHthm} that
for any sequence $C_n\in\mathcal{H}_n$, such that $U_n\ne U_m$ when $n\ne m$,
\begin{equation}\label{smallcenter} \lim\diam(C_n)=0. \end{equation}

By Theorems~\ref{smallC} and \ref{contcross} we can extend the isotopy $h$ of $Z$ to
an isotopy $H_n$ of $Z\cup \mathcal{H}^*_n$ such that $H_n$ preserves the hyperbolic crosscuts
in $\mathcal{H}^0_n$ (i.e., $H^t_n(\mathcal{H}^0_n)=\mathcal{H}^t_n$).
Each gap $G^t$ in $U^t_n$ is a hyperbolic convex set with barycenter $b^t_G$ using the Cayley-Klein
model of $U^t_n$. For each $t$ define $H^t_n(b^0_G)=b^t_G$ and extend $H_n$ over $G$
by taking the \lq\lq cone\rq\rq of $H_n$ over the boundary of $G$ and its barycenter $b^0_G$
(using the Cayley-Klein model). Note that if $G_i$ is a convergent sequence of gaps in $U_n$, then either
$\lim G_i$ is a point in the boundary of $U_n$, or a leaf $C$ in $\mathcal{H}_n$. In the
latter case the barycenters $b_i$ of $G_i$ converge to the barycenter of $C$.
It follows
that the isotopy $h$ can be extended to an isotopy $H_n$ of $Z\cup U_n$ for each $n$.

Let $H=\cup H_n$. Then $H:\sphere\times[0,1]\to\sphere$  is continuous by (\ref{smallcenter}).
Hence $H$ is the required extension of $h$.
\end{proof}

Theorem~\ref{mainsphere} shows that we can extend an isotopy $h$ of a planar continuum $Z$, starting at the identity,
to an isotopy $H:\sphere\times [0,1]\to\sphere$ of the sphere. Let $U$ denote the component
of $\sphere\setminus Z$ containing the point at infinity. By composing the isotopy $H$ by an isotopy $K$
of the sphere such that $K^0=id_{\sphere}$ and $K^t|_{\sphere\setminus U}=id_{\sphere\setminus U}$,
 and $K^t(H^t(\infty))=\infty$ for all $t\in[0,1]$ we
 obtain an isotopy which extends $h$ and fixes the point
at infinity. Hence the following theorem follows.

\begin{thm}\label{main}
Suppose that $h^t$ is an isotopy of a plane continuum $Z\subset \complex$
with $h^0=id|_Z$.
Then there exists an extension to an isotopy $H^t:\complex\to\complex$ such that $H^0=id$.
\end{thm}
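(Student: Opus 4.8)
The plan is to reduce the final statement to the already-proved spherical case, Theorem~\ref{mainsphere}, by the standard device of removing the point at infinity via a suitable correction isotopy. Given the planar continuum $Z\subset\complex\subset\sphere$, apply Theorem~\ref{mainsphere} to obtain an isotopy $H:\sphere\times[0,1]\to\sphere$ with $H^0=id_\sphere$ extending $h$. The only obstruction to viewing this as an isotopy of $\complex$ is that $H^t$ need not fix $\infty$: indeed $H^t(\infty)$ traces out a path in $\sphere$ starting at $\infty$. Since $Z$ is a proper subcontinuum, $\infty$ lies in the (open) complementary domain $U$ of $\sphere\setminus Z$ that contains it, and because $H^t$ extends $h$ it maps $U$ homeomorphically onto the corresponding domain $U^t=H^t(U)$ with $\infty\in U^t$, so the path $t\mapsto H^t(\infty)$ stays inside the moving domain $H^t(U)$.

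First I would construct a correction isotopy $K:\sphere\times[0,1]\to\sphere$ with $K^0=id_\sphere$, $K^t|_{\sphere\setminus U}=id$, and $K^t(H^t(\infty))=\infty$ for all $t$. To do this, pick a point $z_0\in U$ close to $\infty$ and an embedded arc (or a small disk neighborhood) in $U$ through which the path $t\mapsto H^t(\infty)$ can be pushed back to $\infty$; more concretely, one can use an ambient isotopy of $\sphere$ supported in a small ball $B\subset U$ that contains the whole trace $\{H^t(\infty):t\in[0,1]\}$ — such a ball exists for at least a subdivision of $[0,1]$, and one composes finitely many such pushes, or invokes the standard fact that a path in an open connected surface is covered by an ambient isotopy supported in any given neighborhood of that path. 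Then set $\tilde H^t=K^t\circ H^t$. This is again an isotopy of $\sphere$ with $\tilde H^0=id_\sphere$, it still extends $h$ (because $K^t$ is the identity on $Z\subset\sphere\setminus U$), and now $\tilde H^t(\infty)=\infty$ for all $t$.

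Finally, since each $\tilde H^t$ is a homeomorphism of $\sphere$ fixing $\infty$, its restriction to $\complex=\sphere\setminus\{\infty\}$ is a homeomorphism of $\complex$, and the map $(z,t)\mapsto \tilde H^t(z)$ on $\complex\times[0,1]$ is continuous (continuity away from $\infty$ is inherited from continuity on $\sphere\times[0,1]$, and there is no point at infinity left in the domain). Set $H^t:=\tilde H^t|_\complex$; then $H^0=id_\complex$ and $H$ extends $h$, which is the assertion of Theorem~\ref{main}.

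I expect the only real point requiring care to be the construction of the correction isotopy $K$ supported in $U$: one must verify both that the trace of $t\mapsto H^t(\infty)$ can be covered by a suitable (sequence of) neighborhoods inside the open connected set $U$, and that the resulting push-back isotopy can be taken to be the identity outside $U$, so as not to disturb $Z$ or the extension already built there. All of this is routine two-dimensional ambient-isotopy technology — compactness of $[0,1]$, local pushes in coordinate disks, and composition — but it is the step where the argument is doing actual work rather than merely unwinding definitions.
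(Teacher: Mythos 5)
Your overall route is the paper's own: invoke Theorem~\ref{mainsphere} to get a sphere isotopy $H$, compose with a correction isotopy that re-fixes $\infty$ without disturbing the extension on $Z$, and then restrict to $\complex$. But the step you yourself single out as the one doing real work is exactly where your bookkeeping fails. You support $K^t$ in the \emph{fixed} domain $U$ and compose as $K^t\circ H^t$. Two things go wrong. First, as you note, the trace $t\mapsto H^t(\infty)$ lies in the \emph{moving} domain $H^t(U)$, which need not be contained in $U$; if $H^t(\infty)$ leaves $U$, no isotopy supported in $U$ can carry it back to $\infty$, so the ball $B\subset U$ containing the whole trace need not exist. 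Second, your check that $K^t\circ H^t$ still extends $h$ (``because $K^t$ is the identity on $Z\subset\sphere\setminus U$'') tests the wrong set: $(K^t\circ H^t)(z)=K^t(h^t(z))$, so what is needed is that $K^t$ fixes $Z^t=h^t(Z)$ pointwise, and $Z^t$ can perfectly well enter $U$ (e.g.\ if $h$ drags $Z$ far into the unbounded domain), where $K^t$ is not the identity. (The paper's closing paragraph, read literally as $K^t\circ H^t$ with $K^t|_{\sphere\setminus U}=id$, is open to the same objection; it is stated without proof, so the burden of making it precise falls on you.)

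Both defects disappear if you correct on the other side. The map $t\mapsto (H^t)^{-1}(\infty)$ is continuous (the inverse of an isotopy of a compact space is an isotopy), avoids $Z$ (if $(H^t)^{-1}(\infty)\in Z$ then $\infty=h^t(z)\in\complex$, absurd), and starts at $\infty\in U$; hence this path stays in the component $U$ of $\sphere\setminus Z$. Your ambient-isotopy machinery (finitely many pushes in coordinate disks inside $U$, using compactness of $[0,1]$) then yields $K:\sphere\times[0,1]\to\sphere$ with $K^0=id_{\sphere}$, $K^t=id$ on $\sphere\setminus U$, and $K^t(\infty)=(H^t)^{-1}(\infty)$. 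Now $\widetilde H^t=H^t\circ K^t$ fixes $\infty$ for all $t$, agrees with $h^t$ on $Z$ because $K^t$ really is the identity on $Z\subset\sphere\setminus U$, and satisfies $\widetilde H^0=id_{\sphere}$; restricting to $\complex=\sphere\setminus\{\infty\}$ completes the proof exactly as in your last paragraph.
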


\bibliographystyle{amsalpha}
\bibliography{/lex/references/refshort}

\providecommand{\bysame}{\leavevmode\hbox to3em{\hrulefill}\thinspace}
\providecommand{\MR}{\relax\ifhmode\unskip\space\fi MR }
\providecommand{\MRhref}[2]{%
  \href{http://www.ams.org/mathscinet-getitem?mr=#1}{#2}
}
\providecommand{\href}[2]{#2}

\end{document}